\numberwithin{equation}{section}
\newcommand{\oyy}{\Omega_{yy}}
\newcommand{\oyx}{\Omega_{yx}}
\newcommand{\oyxt}{\Omega_{yx}^{\, t}}
\newcommand{\oxx}{\Omega_{xx}}
\newcommand{\oyys}{\Omega_{yy}^{*}}
\newcommand{\oyxs}{\Omega_{yx}^{*}}
\newcommand{\oyxts}{\Omega_{yx}^{*\, t}}
\newcommand{\syy}{\Sigma_{yy}}
\newcommand{\syx}{\Sigma_{yx}}
\newcommand{\syxt}{\Sigma_{yx}^{\, t}}
\newcommand{\sxx}{\Sigma_{xx}}
\newcommand{\sxxi}{\Sigma_{xx}^{-1}}
\newcommand{\sxxs}{\Sigma_{xx}^{*}}
\newcommand{\syys}{\Sigma_{yy}^{*}}
\newcommand{\syxs}{\Sigma_{yx}^{*}}
\newcommand{\ts}{\theta^{\, *}}
\newcommand{\oyyi}{\oyy^{-1}}
\newcommand{\oyyis}{\Omega_{yy}^{*\, -1}}
\newcommand{\varx}{S^{\, (n)}_{xx}}
\newcommand{\vary}{S^{\, (n)}_{yy}}
\newcommand{\covyx}{S^{\, (n)}_{yx}}
\newcommand{\dyy}{\delta \theta_{yy}}
\newcommand{\dyx}{\delta \theta_{yx}}
\newcommand{\dyxt}{\delta \theta_{yx}^{\, t}}
\newcommand{\dt}{\delta \theta}
\newcommand{\dvt}{\delta \vartheta}
\newcommand{\dvyy}{\delta \vartheta_{yy}}
\newcommand{\dvyx}{\delta \vartheta_{yx}}
\newcommand{\tr}{\textnormal{tr}}
\newcommand{\hsp}{\hspace{0.5cm}}
\renewcommand{\geq}{\geqslant}
\renewcommand{\leq}{\leqslant}
\newcommand{\llangle}{\langle\!\langle}
\newcommand{\rrangle}{\rangle\!\rangle}
\renewcommand{\vec}{\textnormal{vec}}
\newcommand{\vech}{\textnormal{vech}}
\newcommand{\diag}{\textnormal{diag}}
\newcommand{\lmax}{\lambda_{\textnormal{max}}}
\newcommand{\lmin}{\lambda_{\textnormal{min}}}
\newcommand{\wh}{\widehat}
\newcommand{\wht}{\widehat{\theta}}
\newcommand{\whoyy}{\widehat{\Omega}_{yy}}
\newcommand{\whoyx}{\widehat{\Omega}_{yx}}
\newcommand{\omli}{\underline{\omega}_{L}}
\newcommand{\omls}{\overline{\omega}_{L}}
\newcommand{\omss}{\overline{\omega}_{S}}
\newcommand{\corr}{\dC\textnormal{orr}}
\newcommand{\cG}{\mathcal{G}}
\newcommand{\cN}{\mathcal{N}}
\newcommand{\dC}{\mathbb{C}}
\newcommand{\dR}{\mathbb{R}}
\newcommand{\dS}{\mathbb{S}}
\newcommand{\de}{\mathrm{e}}
\theoremstyle{plain}
\newtheorem{thm}{Theorem}[section]
\newtheorem{lem}{Lemma}[section]
\newtheorem{prop}{Proposition}[section]
\theoremstyle{remark}
\newtheorem{rem}{Remark}[section]
\begin{document}

\title[A partial graphical model with prior on the direct links]
{A partial graphical model with a structural prior on the direct links between predictors and responses
\vspace{2ex}}
\author[E. Okome Obiang]{Eunice Okome Obiang}
\address{Univ Angers, CNRS, LAREMA, SFR MATHSTIC, F-49000 Angers, France.}
\email{okome@math.univ-angers.fr}
\author[P. J\'ez\'equel]{Pascal J\'ez\'equel}
\address{1 Unit\'e de Bioinfomique, Institut de Canc\'erologie de l'Ouest, Bd Jacques Monod, 44805 Saint Herblain Cedex, France.\vspace{-2ex}}
\address{2 SIRIC ILIAD, Nantes, Angers, France. \vspace{-2ex}}
\address{3 CRCINA, INSERM, CNRS, Universit\'e de Nantes, Universit\'e d'Angers, Institut de Recherche en Sant\'e-Universit\'e de Nantes, 8 Quai Moncousu - BP 70721, 44007, Nantes Cedex 1, France.}
\email{pascal.jezequel@ico.unicancer.fr}
\author[F. Pro\"ia]{Fr\'ed\'eric Pro\"ia}
\address{Univ Angers, CNRS, LAREMA, SFR MATHSTIC, F-49000 Angers, France.}
\email{frederic.proia@univ-angers.fr}

\thanks{}
\keywords{High-dimensional linear regression, Partial graphical model, Structural penalization, Sparsity, Convex optimization.}

\begin{abstract}
This paper is devoted to the estimation of a partial graphical model with a structural Bayesian penalization. Precisely, we are interested in the linear regression setting where the estimation is made through the direct links between potentially high-dimensional predictors and multiple responses, since it is known that Gaussian graphical models enable to exhibit direct links only, whereas coefficients in linear regressions contain both direct and indirect relations (due \textit{e.g.} to strong correlations among the variables). A smooth penalty reflecting a generalized Gaussian Bayesian prior on the covariates is added, either enforcing patterns (like row structures) in the direct links or regulating the joint influence of predictors. We give a theoretical guarantee for our method, taking the form of an upper bound on the estimation error arising with high probability, provided that the model is suitably regularized. Empirical studies on synthetic data and a real dataset are conducted.
\end{abstract}

\maketitle

\vspace{-0.5cm}

\begin{center}
\textit{AMS 2020 subject classifications: Primary 62A09, 62F30; Secondary 62J05.}
\end{center}

\medskip

\section{Introduction}
\label{SecIntro}

We are interested in the recovery and estimation of direct links between high-dimensional predictors and a set of responses. Whereas the graphical models seem a natural way to go, we propose to take account of a prior knowledge on the predictors, when possible. This is typically the case when dealing with genetic markers whose joint influence may be anticipated thanks to some kind of genetic distance, or when the predictors are supposed to represent a continuous phenomenon so that consecutive covariates probably act together. In this regard, while taking up the graphical approach, we introduce some Bayesian information in a structural regularization of the estimation procedure, although the inference remains frequentist, thereby following the idea of Chiquet \textit{et al.} \cite{ChiquetEtAl17}. This strategy also enables to affect the amount of shrinkage by playing with some hyperparametrization in the prior, while sparsity may be obtained \textit{via} usual penalty-based patterns. Regarding the mathematical formalization of the graphical models that we will just briefly discuss in this introduction, we refer the reader to the very complete handbook recently edited by Maathuis \textit{et al.} \cite{MaathuisEtAl18}. We also refer the reader to the book of Hastie \textit{et al.} \cite{HastieEtAl15} and to the one of Giraud \cite{Giraud14}, both related to the standard high-dimensional statistical methods. Before introducing the model and the organization of this work, let us describe the notation used throughout the paper.

\subsection{Notation}

For any matrix $A$, $\vert A \vert_{*} = \Vert \vec(A) \Vert_{*}$ is the elementwise $\ell_{*}$ norm of $A$ and $\vert A \vert_{*}^{-}$ is $\vert A \vert_{*}$ deprived of the diagonal terms of $A$. We also note $\Vert A \Vert_{F} = \vert A \vert_{2}$ the Frobenius norm of $A$ and $\Vert A \Vert_{2}$ the spectral norm of $A$. The Frobenius inner product between any matrices $A$ and $B$ of same dimensions is $\llangle A, B \rrangle = \langle \vec(A), \vec(B) \rangle = \tr(A^{t}\, B)$ whereas $\langle u, v \rangle = u^{t}\, v$ is the inner product of the Euclidean real space. For any vector $u$, $\vert u \vert_0$ is the number of non-zero values in $u$. For a matrix $A$, $[A]_{C}$ is to be understood as the matrix $A$ whose elements outside of the set of coordinates $C$ are set to zero and $\vec(A)$ is the vectorization of $A$ into a column vector. The eigenvalues of a square matrix $A$ of size $d$ with spectrum $\textnormal{sp}(A)$ are $\lambda_{i}(A)$ taken in decreasing order (from $\lambda_1(A) = \lmax(A)$ to $\lambda_{d}(A) = \lmin(A))$. The cones of symmetric positive semi-definite and definite matrices of dimension $d$ are $\dS_{+}^{\, d}$ and $\dS_{++}^{\, d}$ respectively.

\subsection{The partial graphical model}

In the classic Gaussian graphical model (GGM) setting, we aim at estimating the precision matrix $\Omega = \Sigma^{-1}$ of jointly normally distributed random vectors $Y \in \dR^{q}$ and $X \in \dR^{p}$ with zero mean and covariance $\Sigma$. The point is that it induces a graphical structure among the variables and the support of $\Omega$ is closely related to the conditional interdependences between them. Let us consider, now and in all the study, the sample covariances of $n$ independent observations $(Y_{i}, X_{i})$, denoted by
\begin{equation}
\label{SampleCov}
\vary = \frac{1}{n} \sum_{i=1}^{n} Y_{i}\, Y_{i}^{t}, \hsp \covyx = \frac{1}{n} \sum_{i=1}^{n} Y_{i}\, X_{i}^{t} \hsp \text{and} \hsp \varx = \frac{1}{n} \sum_{i=1}^{n} X_{i}\, X_{i}^{t}.
\end{equation}
Maximizing the penalized likelihood of a GGM boils down to finding $\Omega \in \dS_{++}^{\, p+q}$ that minimizes the convex objective
\begin{equation}
\label{LikGGM}
L_{n}(\Omega) = -\ln \det(\Omega) + \llangle S^{\, (n)}, \Omega \rrangle + \lambda\, \textnormal{pen}(\Omega)
\end{equation}
where $S^{\, (n)}$ is the full sample covariance built from the blocks \eqref{SampleCov}. The penalty function $\textnormal{pen}(\Omega)$ is usually $\vert \Omega \vert_1$ or even $\vert \Omega \vert_1^{-}$. Efficient algorithms exist to get solutions for \eqref{LikGGM}, see \textit{e.g.} Banerjee \textit{et al.} \cite{BanerjeeEtAl08}, Yuan and Lin \cite{YuanLin07}, Lu \cite{Lu09} or the graphical Lasso of Friedman \textit{et al.} \cite{FriedmanEtAl08}. The reader may also look at the theoretical guarantees of Ravikumar \textit{et al.} \cite{RavikumarEtAl11}. However, thinking at $X_{i}$ as a predictor of size $p$ associated with a response $Y_{i}$ of size $q$, the partial Gaussian graphical model (PGGM), developped \textit{e.g.} by Sohn and Kim \cite{SohnKim12} or Yuan and Zhang \cite{YuanZhang14}, appears as a powerful tool to exhibit direct relationships between the predictors and the responses. To understand this, consider the decomposition into blocks
\begin{equation*}
\Omega = \begin{pmatrix}
\oyy & \oyx \\
\oyxt & \oxx
\end{pmatrix} \hsp \text{and} \hsp \Sigma = \begin{pmatrix}
\syy & \syx \\
\syxt & \sxx
\end{pmatrix}
\end{equation*}
where $\oyy \in \dS_{++}^{\, q}$, $\oyx \in \dR^{q \times p}$ and $\oxx \in \dS_{++}^{\, p}$ and where the same goes for $\sxx$, $\syx$ and $\sxx$. The precision matrix $\Omega = \Sigma^{-1}$ satisfies, by blockwise inversion,
\begin{equation}
\label{InvBloc}
\oyyi = \syy - \syx\, \sxxi\, \syxt \hsp \text{and} \hsp \oyx = -(\syy - \syx\, \sxxi\, \syxt)^{-1}\, \syx\, \sxxi.
\end{equation}
The conditional distribution peculiar to Gaussian vectors
\begin{equation*}
Y_{i}\, \vert\, X_{i}\, \sim\, \cN(-\oyyi\, \oyx\, X_{i},\, \oyyi)
\end{equation*}
gives a new light on the multiple-output regression $Y_{i} = B^{\, t}\, X_{i} + E_{i}$ with Gaussian noise $E_{i} \sim \cN(0, R)$, through the reparametrization $B = -\oyxt\, \oyyi$ and $R = \oyyi$. Whereas $B$ contains direct and indirect links between the predictors and the responses (due \textit{e.g.} to strong correlations among the variables), $\oyx$ only contains direct links, as it is shown by the graphical models theory. In other words, the direct links are closely related to the concept of partial correlations between $X$ and $Y$ (see Meinshausen and B\"ulmann \cite{MeinshausenBuhlmann06} or Peng \textit{et al.} \cite{PengEtAl09}, for the univariate case). For example, the direct link between predictor $k$ and response $\ell$ may be evaluated through the partial correlation $\corr(Y_{\ell}, X_{k}\, \vert\, Y_{\neq\, \ell}, X_{\neq\, k})$ contained, apart from a multiplicative coefficient, in the $\ell$-th row and $k$-th column of $\oyx$ (see \textit{e.g.} Cor. A.6 in \cite{Giraud14}) with the particularly interesting consequence that the support of $\oyx$ is sufficient to identify direct relationships between $X$ and $Y$. Hence, in the partial setting, the objective reduces to the estimation of the direct links $\oyx$ together with the conditional precision matrix of the responses $\oyy$. Maximizing the penalized conditional log-likelihood of the model now comes down to minimizing the new convex objective
\begin{eqnarray}
\label{LikPGGM}
L_{n}(\oyy, \oyx) ~ = ~ -\ln \det(\oyy) + \llangle \vary, \oyy \rrangle + 2\, \llangle \covyx, \oyx \rrangle \hsp \hsp \hsp \hsp \hsp \hsp \nonumber \\
\hsp \hsp \hsp \hsp \hsp \hsp +~ \llangle \varx, \oyxt\, \oyyi\, \oyx \rrangle + \lambda\, \textnormal{pen}(\oyy) + \mu\, \textnormal{pen}(\oyx)
\end{eqnarray}
over $(\oyy, \oyx) \in \dS_{++}^{\, q} \times \dR^{q \times p}$ for some usual penalty functions. It is worth noting that $\textnormal{pen}(\oyx)$ often plays a crucial role in modern statistics dealing with high-dimensional predictors (and the natural choice is $\vert \oyx \vert_1$ to get sparsity) while we may choose $\lambda=0$, for the responses are hardly numerous. In the seminal papers \cite{SohnKim12} and \cite{YuanZhang14}, the authors consider $\vert \oyy \vert_1$ and $\vert \oyy \vert_1^{-}$ for $\textnormal{pen}(\oyy)$, respectively. Yuan and Zhang \cite{YuanZhang14} also point out that no estimation of $\oxx$ is needed anymore. In a graphical model, the estimation of $\oyx$ and $\oyy$ depends on the accuracy of the estimation of $\Omega$ which, in turn, is strongly affected by the one of $\oxx$, especially in a high-dimensional setting. The partial model overrides this issue, the focus is on $\oyx$ and $\oyy$ while $\oxx$ has disappeared from the objective function \eqref{LikPGGM}. The latter is obtained either by considering the multiple-output Gaussian regression scheme, or, as it is done in \cite{YuanZhang14}, by eliminating $\oxx$ thanks to a first optimization step in \eqref{LikGGM}. In this paper, we will consider the penalties
\begin{equation}
\label{Pen}
\textnormal{pen}(\oyy) = \vert \oyy \vert_1^{-} \hsp \text{and} \hsp \textnormal{pen}(\oyx) = \vert \oyx \vert_1
\end{equation}
which correspond to the PGGM (Gm) of \cite{YuanZhang14}. The Spring (Spr) of \cite{ChiquetEtAl17} can also be seen as a PGGM but with no penalty on $\oyy$ (replaced with an additional structuring one on $\oyx$, we will come back to this point thereafter), so for Spr we may consider $\lambda=0$. The generalized procedure (GenGm) at the heart of the study relies on a combination between these two approaches. We will see in due time that we keep both the penalties of Gm and the structuring one of Spr on $\oyx$. Finally, the intermediate solution consisting in estimating $\oyy$ and $B$ through the conditional distribution $Y_{i}\, \vert\, X_{i} \sim \cN(B^{\, t}\, X_{i},\, \oyyi)$ with penalizations both on $B$ and $\oyy$, presented and analyzed by Rothman \textit{et al.} \cite{RothmanEtAl10} and by Lee and Liu \cite{LeeLiu12}, is better known as a multivariate regression with covariance estimation (MRCE). However, it has been shown that the objective function suffers from a lack of convexity and that the optimization procedure may be debatable, in addition to the less convenient setup for statistical interpretation ($B$ contains both direct and indirect influences) compared to PGGM. Without  claiming  to  be  exhaustive, let us conclude this quick introduction by citing some related works, like the structural generalization of the Elastic-Net of Slawski \textit{et al.} \cite{SlawskiEtAl10}, the Dantzig approach of Cai \textit{et al.} \cite{CaiEtAl11} put in practice on genomic data \cite{CaiEtAl13}, the greedy research of the non-zero pattern in $\Omega$ of Johnson \textit{et al.} \cite{JohnsonEtAl12}, the approach of Fan \textit{et al.} \cite{FanEtAl09} using a non-convex SCAD penalty to reduce the bias of the Lasso in the estimation of $\Omega$, the eQTL data analysis of Yin and Li \cite{YinLi11} which makes use of a sparse conditional GGM, and so on. All the references inside will complete this concise list.

\subsection{Organization of the paper}

To sum up, we have two goals in this paper :
\begin{enumerate}
\item Give some theoretical guarantees to the (slightly modified) model introduced in Chiquet \textit{et al.} \cite{ChiquetEtAl17}.
\item Generalize the result of Yuan and Zhang \cite{YuanZhang14} to the case where a structural penalization is added in the estimation step.
\end{enumerate}
In Section \ref{SecMod}, we introduce the model, consisting in putting a generalized Gaussian prior on the direct links before the procedure of estimation of $\oyy$ and $\oyx$, and we detail the new convex objective. Then we provide some error bounds for our estimates, useful as theoretical guarantees of performance. Section \ref{SecEmp} is devoted to empirical considerations. We explain how we deal with the minimization of the new objective and we test the method on simulations first, and next on a real dataset (a Canadian average annual weather cycle, see \textit{e.g.} \cite{RamsaySilverman06}). After a short conclusion in Section \ref{SecConclu}, we finally prove our results in Section \ref{SecPro}. The numerous constants appearing in the results and the proofs are gathered in the Appendix, for the sake of readability.

\section{A generalized Gaussian prior on the direct links}
\label{SecMod}

We use the definition given in formulas (1)-(2) of \cite{PascalEtAl13} for the so-called $d$-dimensional multivariate generalized Gaussian $\cG\cN(0, 1, V, \beta)$ distribution with mean 0, scale 1, scatter parameter $V \in \dS_{++}^{\, d}$ and shape parameter $\beta > 0$. According to the authors, the density takes the form of
\begin{equation*}
\forall\, z \in \dR^{d}, \hsp f_{V,\, \beta}(z) = \frac{\beta\, \Gamma(\frac{d}{2})}{\pi^{\frac{d}{2}}\, \Gamma(\frac{d}{2 \beta})\, 2^{\frac{d}{2 \beta}} \sqrt{\det(V)}}\, \exp\bigg(\!-\frac{ \langle z, V^{-1} z \rangle^{\beta}}{2} \bigg)
\end{equation*}
where $\Gamma$ is the Euler Gamma function.

\begin{figure}[h!]
\centering
\includegraphics[width=9cm]{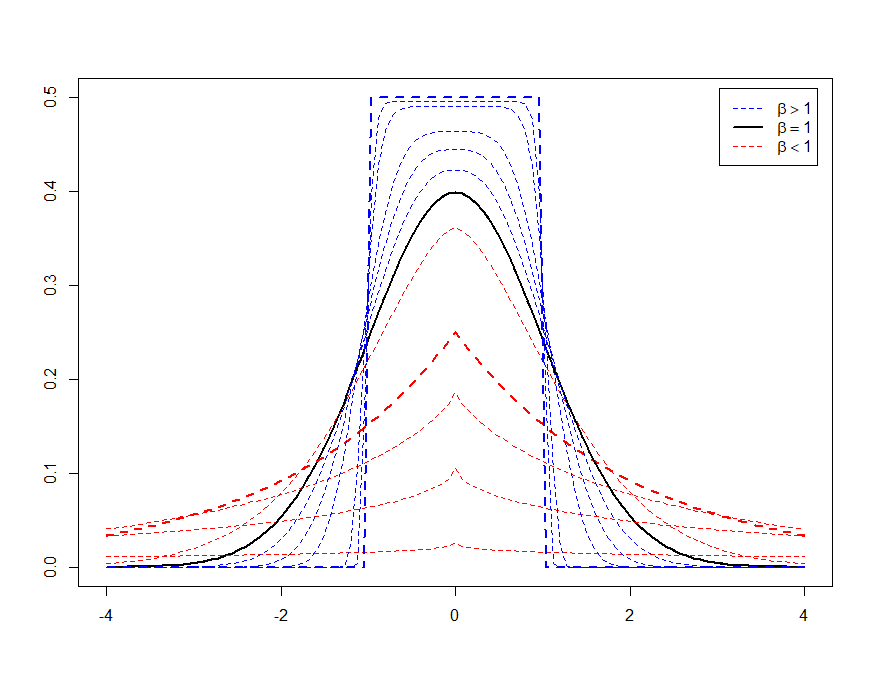}
\caption{Marginal shape of the generalized Gaussian distribution ($d=1$ and $V=1$) for some $\beta < 1$ (dotted red), $\beta=1$ (black) and some $\beta > 1$ (dotted blue). The noteworthy cases $\beta = 1/2$ (Laplace), $\beta=1$ (Gaussian) and $\beta = +\infty$ (uniform) are highlighted.}
\label{FigGGen}
\end{figure}

We clearly recognize the Gaussian $\cN(0, V)$ setting for $\beta=1$. Moreover, for $\beta=1/2$, it can be seen as a multivariate Laplace distribution whereas it is known to converge to some uniform distribution as $\beta \rightarrow +\infty$. The marginal shapes ($d=1$ and $V=1$) of the distribution are represented on Figure \ref{FigGGen}, depending on whether $\beta < 1$, $\beta=1$ or $\beta > 1$. Our results hold for all $\beta \geq 1$ but, as will be explained in due course, we shall not theoretically deviate too much from the Gaussianity in the prior (even if we will allow ourselves some exceptions in the practical works). The usual Bayesian approach for multiple-output Gaussian regression having $B$ as matrix of coefficients and $R$ as noise variance consists in a conjugate prior $\vec(B) \sim \cN(b, R \otimes L^{-1})$ for some information matrix $L \in \dS_{++}^{\, p}$ and a centering value $b$ (see \textit{e.g.} Sec. 2.8.5 of \cite{RossiEtAl12}). In the PGGM reformulation, we have $R = \oyyi$ and $B = -\oyxt\, \oyyi$ as explained in Section \ref{SecIntro}, and of course we shall choose $b=0$ to meet our purposes. Thus, 
\begin{equation*}
\vec(\oyxt) = -(\oyy \otimes I_{p})\, \vec(B)\, \sim\, \cN(0, \oyy \otimes L^{-1})
\end{equation*}
is a natural prior for the direct links (this is in particular the choice of the authors of \cite{ChiquetEtAl17}). Following the same logic, let us choose $\oyy \otimes L^{-1}$ for scatter parameter and suppose that
\begin{equation}
\label{Prior}
\vec(\oyxt)\, \sim\, \cG\cN(0, 1, \oyy \otimes L^{-1}, \beta).
\end{equation}
In this way, we can play on the intensity of the constraint we want to bring on $\oyx$, from a non-informative prior to quasi-boundedness through Laplace and Gaussian distributions. This prior entails an additional smooth term acting as a structural penalization in the objective \eqref{LikPGGM} that becomes
\begin{eqnarray}
\label{LikHPGGM}
L_{n}(\oyy, \oyx) ~ = ~ -\ln \det(\oyy) + \llangle \vary, \oyy \rrangle + 2\, \llangle \covyx, \oyx \rrangle \hsp \hsp \hsp \hsp \hsp \hsp \nonumber \\
\hsp \hsp \hsp \hsp \hsp \hsp +~ \llangle \varx, \oyxt\, \oyyi\, \oyx \rrangle + \eta\, \llangle L, \oyx^{\, t}\, \oyyi\, \oyx \rrangle^{\beta} + \lambda\, \vert \oyy \vert_1^{-} + \mu\, \vert \oyx \vert_1
\end{eqnarray}
with three regularization parameters $(\lambda, \mu, \eta)$. The smooth penalization lends weight to the prior on $\oyx$ and thereby plays on the extent of shrinkage and structuring through $\beta$, whereas $\vert \oyx \vert_1$ and $\vert \oyy \vert_1^{-}$ are designed to induce sparsity. One can note that this is closely related to the log-likelihood of a hierarchical model of the form
\begin{equation*}
\left\{
\begin{array}{l}
Y_{i}\, \vert\, X_{i}, \oyx\, \sim\, \cN(-\oyyi\, \oyx\, X_{i},\, \oyyi) \\ 
\vec(\oyxt)\, \sim\, \cG\cN(0, 1, \oyy \otimes L^{-1}, \beta)
\end{array}
\right.
\end{equation*}
where the emphasis is on $\oyx$ in the prior and $\oyy$ remains a fixed parameter, although it is important to see that, in this work, the estimation step does not rely on a posterior distribution. The following proposition is related to the existence of a global minimum for our objective \eqref{LikHPGGM} with respect to $(\oyy, \oyx)$ as soon as $\beta \geq 1$.
\begin{prop}
\label{PropConv}
Assume that $\beta \geq 1$. Then, $L_{n}(\oyy, \oyx)$ defined in \eqref{LikHPGGM} is jointly convex with respect to $(\oyy, \oyx)$.
\end{prop}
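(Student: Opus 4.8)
The plan is to decompose the objective $L_n(\oyy,\oyx)$ in \eqref{LikHPGGM} as a sum of terms and verify joint convexity term by term on the domain $\dS_{++}^{\, q} \times \dR^{q\times p}$. The terms $-\ln\det(\oyy)$, $\llangle \vary,\oyy\rrangle$, $2\llangle\covyx,\oyx\rrangle$, $\lambda\vert\oyy\vert_1^-$ and $\mu\vert\oyx\vert_1$ are handled immediately: $-\ln\det$ is the standard strictly convex function on the PSD cone, the two Frobenius-inner-product terms are linear in $(\oyy,\oyx)$, and the two penalties are norms (hence convex) composed with linear maps. So the real content is the joint convexity of the two coupling terms $\llangle\varx,\oyxt\oyyi\oyx\rrangle$ and $\eta\llangle L,\oyxt\oyyi\oyx\rrangle^{\beta}$, both of which involve the nonlinear expression $\oyxt\oyyi\oyx$.

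The key observation I would use is a matrix-fractional / Schur-complement lemma: for $W\in\dS_{++}^{\, q}$ and $M\in\dR^{q\times p}$, the map $(W,M)\mapsto M^t W^{-1} M$ is jointly convex in the Loewner order, equivalently the scalar map $(W,M)\mapsto \llangle A, M^t W^{-1} M\rrangle$ is jointly convex for every $A\in\dS_+^{\, p}$. The cleanest justification is the epigraph/Schur-complement characterization: $M^t W^{-1} M \preceq T$ is equivalent, for $W\succ 0$, to the linear matrix inequality $\begin{pmatrix} W & M \\ M^t & T\end{pmatrix}\succeq 0$, whose solution set in $(W,M,T)$ is convex; convexity of the matrix-valued map in the Loewner order follows, and then pairing against the PSD matrix $\varx$ (which is PSD by construction as an empirical second-moment matrix) preserves convexity. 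This immediately settles the term $\llangle\varx,\oyxt\oyyi\oyx\rrangle$.

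For the structural term $\eta\llangle L,\oyxt\oyyi\oyx\rrangle^{\beta}$, write $g(\oyy,\oyx)=\llangle L,\oyxt\oyyi\oyx\rrangle$. Since $L\in\dS_{++}^{\, p}$, the same matrix-fractional argument gives that $g$ is jointly convex and nonnegative (in fact $g=\llangle L^{1/2}\,\text{stuff}\rrangle\geq 0$, being the pairing of two PSD matrices). Then I invoke the composition rule: if $g\geq 0$ is convex and $h(t)=t^{\beta}$ is convex and nondecreasing on $[0,\infty)$ — which holds precisely because $\beta\geq 1$ — then $h\circ g$ is convex. This is exactly where the hypothesis $\beta\geq 1$ enters, and it is the crux of the statement: for $\beta<1$ the power function is concave on $[0,\infty)$ and the composition argument breaks, consistent with the proposition's assumption.

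The main obstacle, and the step deserving the most care, is the rigorous proof that $(W,M)\mapsto \llangle A,M^t W^{-1}M\rrangle$ is jointly convex for $A\succeq 0$ — i.e. the matrix-fractional convexity lemma — since everything else is a routine application of standard convexity facts. I would either cite it (it is classical; see e.g. Boyd--Vandenberghe) or give the short self-contained proof via the Schur complement: fix $(W_0,M_0),(W_1,M_1)$ with $W_0,W_1\succ 0$ and $\tau\in[0,1]$, let $W_\tau=(1-\tau)W_0+\tau W_1$, $M_\tau=(1-\tau)M_0+\tau M_1$, and show $M_\tau^t W_\tau^{-1}M_\tau\preceq (1-\tau)M_0^t W_0^{-1}M_0+\tau M_1^t W_1^{-1}M_1$ by noting each side's Schur complement block matrix is PSD and that PSD-ness is preserved under convex combination of the blocks, then comparing Schur complements. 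Pairing with $A\succeq 0$ on both sides preserves the inequality, and convexity of $g$, hence of $h\circ g$ and of the whole of $L_n$, follows.
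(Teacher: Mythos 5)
Your proposal is correct and follows essentially the same route as the paper: the paper also reduces everything to the joint Loewner-convexity of $(A,B)\mapsto B^{\,t}A^{-1}B$ via the Schur-complement/PSD-block argument (applied with $B$ replaced by $\oyx L^{1/2}$, which is the same as your pairing against $L\succeq 0$), and then composes with the convex nondecreasing map $t\mapsto t^{\beta}$ on $[0,\infty)$ for $\beta\geq 1$. The only cosmetic difference is that the paper cites Prop.~1 of Yuan and Zhang for the convexity of the non-structural part, whereas you rederive the term $\llangle \varx, \oyxt\,\oyyi\,\oyx\rrangle$ from the same matrix-fractional lemma.
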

\begin{proof}
See Section \ref{SecProConv}.
\end{proof}

Now and throughout the rest of the paper, denote by $\theta = (\oyy, \oyx) \in \Theta = \dS_{++}^{\, q} \times \dR^{q \times p}$ the $(q \times (q+p))$-matrix of parameters of the model, with true value $\ts = (\oyys, \oyxs)$. As it is usually done in studies implying sparsity, we will also consider $S$ of cardinality $\vert S \vert$, the true active set of $\ts$ defined as $S = \{ (i,j),\, \ts_{i,j} \neq 0 \}$, and its complement $\bar{S}$. Our results also depends on some basic assumptions related to the true covariances of the Gaussian observations, and we will assume that the following holds.
\begin{equation}
\label{H1}\tag{H$_1$}
\sxxs \in \dS^{\, p}_{++}, \hsp \oyys \in \dS^{\, q}_{++}, \hsp B \neq 0~ \text{(that is, $\oyxs \neq 0$)} \hsp \text{and} \hsp \oyxs\, L\, \oyxts \in \dS_{++}^{\, q}.
\end{equation}
This is a natural hypothesis in our framework, in particular we suppose that there is at least a link between $X$ and $Y$.

\begin{rem}[Null model]
\label{RemNullMod}
Even if it is of less interest, our study does not exclude the case where $\oyxs=0$. Indeed, we might as well consider that $\oyxs=0$ and get the same results, but some constants should be refined. On the other hand, $\sxxs \in \dS^{\, p}_{++}$ and $\oyys \in \dS^{\, q}_{++}$ are crucial.
\end{rem}

Under \eqref{H1}, the random matrices
\begin{equation}
\label{MatA}
A_{n} = (\vary - \syys) - \oyyis\, \oyxs\, (\varx - \sxxs)\, \oyxts\, \oyyis \hsp \text{with} \hsp h_{a} = \vert A_{n} \vert_{\infty}
\end{equation}
and
\begin{equation}
\label{MatB}
B_{n} = 2\, ((\covyx - \syxs) + \oyyis\, \oyxs\, (\varx - \sxxs)) \hsp \text{with} \hsp h_{b} = \vert B_{n} \vert_{\infty}
\end{equation}
are going to play a fundamental role, especially  $h_{a}$ and $h_{b}$. Let us now provide some theoretical guarantees for the estimation of $\theta$ in our model, provided that the regularization parameters are located in a particular area $(\lambda, \mu, \eta) \in \Lambda$. Consider the penalized likelihood $\ell_{\lambda, \mu, \eta}(\theta)$ given in \eqref{LikHPGGM}, and estimate $\theta$ by the global minimum
\begin{equation}
\label{DefEst}
\wht = \arg \min_{\Theta}\, \ell_{\lambda, \mu, \eta}(\theta)
\end{equation}
obtained for $\beta \geq 1$. To facilitate reading, we postpone the precise definition of the numerous constants to the Appendix. We recall that $p$ is the number of predictors, $q$ is the number of responses and $\vert S \vert$ is the size of the true active set.
\begin{thm}
\label{ThmUppBound}
Fix $d_{\lambda} > c_{\lambda} > 1$, $d_{\mu} > c_{\mu} > 1$, $e_{\lambda} > 0$ and $e_{\mu} > 0$, and assume that the regularization parameters satisfy $(\lambda, \mu, \eta) \in \Lambda = [c_{\lambda}\, h_{a},\, d_{\lambda}\, h_{a}] \times [c_{\mu}\, h_{b},\, d_{\mu}\, h_{b}] \times [0,\, \overline{\eta}]$, where
\begin{equation*}
\overline{\eta} = \frac{\min\left\{ \frac{(c_{\lambda}-1)\, \lambda}{c_{\lambda}\, \ell_{a}},\, \frac{(c_{\mu}-1)\, \mu}{c_{\mu}\, \ell_{b}},\, \frac{e_{\lambda}\, h_{a}}{\ell_{a}},\, \frac{e_{\mu}\, h_{b}}{\ell_{b}} \right\}}{\beta\, s_{L}^{\, \beta-1}}
\end{equation*}
for some non-random constants $s_{L}$, $\ell_{a}$ and $\ell_{b}$ defined in \eqref{CstS} and \eqref{CstL}, and the random constants $h_{a}$ and $h_{b}$ given above. Then, under \eqref{H1}, there exists absolute constants $b_1 > 0$ and $b_2 > 0$ such that, for any $0 < b_3 < 1$ and as soon as $n > n_0$, with probability no less that $1 - \de^{-b_2 n}-b_3$, the estimator \eqref{DefEst} satisfies
\begin{equation*}
\Vert \wht - \ts \Vert_{F}\, \leq\, \frac{16\, m^{*}\, c_{\lambda, \mu} \sqrt{\vert S \vert}}{\gamma_{r,\eta,\beta,p}}\, \sqrt{\frac{\ln(10 (p+q)^2) - \ln(b_3)}{n}}
\end{equation*}
where $\gamma_{r,\eta,\beta,p}$, $c_{\lambda, \mu}$ and $m^{*}$ are technical constants defined in \eqref{CstGam}, \eqref{CstCb} and \eqref{CstMs}, respectively, and where the minimal number of observations is given by
\begin{eqnarray}
\label{NbMinObs}
n_0 & = & \max\Bigg\{ \frac{(\ln(10 (p+q)^2) - \ln(b_3))\, c_{\lambda, \mu}^2\, \vert S \vert\, (16\, m^{*})^2}{r^{*\, 2}\, \gamma_{r,\eta,\beta,p}^2}, \nonumber \\
 & & \hsp \hsp \hsp \hsp \hsp b_1\, (q + \lceil s_{\alpha} \rceil \ln(p+q)), \ln(10 (p+q)^2) - \ln(b_3) \Bigg\}
\end{eqnarray}
with $s_{\alpha}$ defined in \eqref{CstSa} and $r^{*}$ in \eqref{CstR}.
\end{thm}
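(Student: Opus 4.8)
The plan is to follow the now-classical strategy for establishing error bounds for penalized $M$-estimators (in the spirit of Negahban--Ravikumar--Wainwright and, in this precise graphical context, Yuan--Zhang \cite{YuanZhang14}), combining a deterministic optimization argument with probabilistic control of the empirical fluctuations $h_a$ and $h_b$. Write $\dt = \wht - \ts$ and $\ell(\theta) = \ell_{\lambda,\mu,\eta}(\theta)$. Since $\wht$ is the global minimizer, $\ell(\wht) \leq \ell(\ts)$, so the excess loss $\ell(\wht) - \ell(\ts)$ is nonpositive. The first step is to expand this excess loss as the sum of a \emph{smooth} part (coming from $-\ln\det(\oyy) + \llangle\vary,\oyy\rrangle + 2\llangle\covyx,\oyx\rrangle + \llangle\varx,\oyxt\oyyi\oyx\rrangle$, plus the $\eta$-term which is smooth by Proposition \ref{PropConv}) and the two nonsmooth $\ell_1$-penalty increments. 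For the smooth part I would Taylor-expand around $\ts$: the gradient at $\ts$ is governed exactly by the random matrices $A_n$ and $B_n$ from \eqref{MatA}--\eqref{MatB} (this is why $h_a,h_b$ are singled out), so by Cauchy--Schwarz/Hölder the linear term is bounded by something like $h_a\,\vert\dyy\vert_1 + h_b\,\vert\dyx\vert_1$ plus a controlled $\eta\ell_a,\eta\ell_b$ contribution; the quadratic term is handled by a restricted strong convexity (RSC) inequality, lower-bounding the Hessian-type remainder by $\gamma\Vert\dt\Vert_F^2$ for a curvature constant $\gamma$ — this is where $\gamma_{r,\eta,\beta,p}$ enters, and where one needs $n$ large and $\Vert\dt\Vert_F$ not too large (hence the radius $r^*$ and the appearance of $n_0$).

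The second step is the standard cone argument. Choosing $\lambda \geq c_\lambda h_a$ and $\mu \geq c_\mu h_b$ with $c_\lambda,c_\mu>1$ (and $\eta \leq \overline\eta$ chosen precisely so the smooth-penalty gradient does not overwhelm these margins — that is exactly what the four-way $\min$ in $\overline\eta$ encodes) forces $\dt$ into a cone where the $\bar S$-part of the error is dominated by the $S$-part, i.e. $\vert[\dt]_{\bar S}\vert_1 \leq 3\vert[\dt]_S\vert_1$ or similar, up to the bias term $\vert\oyys\vert_1^-$ and $\vert\oyxs\vert_1$ corrections absorbed into $m^*$. On this cone, $\vert[\dt]_S\vert_1 \leq \sqrt{|S|}\,\Vert\dt\Vert_F$, so the $\ell_1$-terms collapse to $(\lambda+\mu)\sqrt{|S|}\,\Vert\dt\Vert_F$ up to constants; combining with the RSC lower bound $\gamma\Vert\dt\Vert_F^2$ and rearranging yields
\[
\Vert\dt\Vert_F \;\lesssim\; \frac{(\lambda+\mu)\sqrt{|S|}}{\gamma} \;\asymp\; \frac{(h_a+h_b)\sqrt{|S|}}{\gamma_{r,\eta,\beta,p}},
\]
which is the claimed form once $\lambda,\mu$ are taken at the upper ends $d_\lambda h_a, d_\mu h_b$ (giving the prefactor $c_{\lambda,\mu}$) and the numerical constant $16$ and $m^*$ are tracked through.

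The third step is probabilistic: one must show $h_a = \vert A_n\vert_\infty$ and $h_b = \vert B_n\vert_\infty$ are each $O(\sqrt{\log(p+q)/n})$ with high probability, and that the RSC inequality holds on the relevant radius with probability $\geq 1 - \de^{-b_2 n}$. Both $A_n$ and $B_n$ are linear in the centered sample-covariance fluctuations $\vary-\syys$, $\covyx-\syxs$, $\varx-\sxxs$ of Gaussian data; entrywise, each coordinate is a normalized sum of products of jointly Gaussian variables (a sub-exponential / quadratic-form tail), so a Bernstein bound plus a union over the $O((p+q)^2)$ entries gives $\P(h_a \vee h_b > t) \leq b_3$ for $t \asymp \sqrt{(\log(10(p+q)^2) - \log b_3)/n}$ once $n \geq \log(10(p+q)^2)-\log b_3$; this produces the $\sqrt{(\ln(10(p+q)^2)-\ln b_3)/n}$ factor verbatim. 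The RSC / eigenvalue-control part — e.g. that $\lmin(\varx)$ stays bounded below and the conditional-precision block stays well-conditioned on the error cone — is the source of the $\de^{-b_2 n}$ term and of the lower bound $b_1(q + \lceil s_\alpha\rceil\ln(p+q))$ on $n$, via a covering-number argument over the restricted directions (dimension roughly $q + s_\alpha\log(p+q)$).

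I expect the main obstacle to be the restricted strong convexity step for this particular nonlinear objective. Unlike the ordinary graphical Lasso, the term $\llangle\varx,\oyxt\oyyi\oyx\rrangle$ is \emph{not} quadratic in $\theta$ (it is quadratic in $\oyx$ but couples nonlinearly with $\oyy$ through $\oyyi$), and the added $\eta\llangle L,\oyxt\oyyi\oyx\rrangle^\beta$ term is convex but only $C^1$ when $\beta=1$ (its second-order behavior degenerates as the inner product $\to 0$, hence the need for \eqref{H1}'s condition $\oyxs L\oyxts \in \dS_{++}^q$ to keep it bounded away from $0$ near $\ts$). Getting a clean curvature constant $\gamma_{r,\eta,\beta,p}$ that is uniform over a Frobenius ball of radius $r^*$ around $\ts$ — controlling simultaneously $\lmin$ of the $X$-block, the conditioning of $\oyy$, and the Hessian of the $\beta$-power term — and verifying it survives the event-intersection with the $h_a,h_b$ bounds, is the technically delicate heart of the argument; everything else is bookkeeping of constants (which the authors have wisely exiled to the Appendix).
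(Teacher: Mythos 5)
Your proposal follows essentially the same route as the paper's proof: a Taylor expansion of the smooth part whose gradient at $\ts$ is exactly $A_{n}$ and $B_{n}$, a cone argument driven by $\lambda \geq c_{\lambda} h_{a}$, $\mu \geq c_{\mu} h_{b}$ and the margin condition encoded in $\overline{\eta}$ (Lemma \ref{LemInegSmSp}), a restricted strong convexity bound with curvature $\gamma_{r,\eta,\beta,p}$ on the neighborhood $N_{r,\alpha}(\ts)$ (Lemma \ref{LemMajErr}, closed by the rescaling-to-the-boundary argument of Lemma \ref{LemBorneErr}), and a union-bound control of $h_{a}$, $h_{b}$ together with the RIP-type event \eqref{H2} (Lemma \ref{LemBorneFinale}); you also correctly identify the RSC step for the nonlinearly coupled $\oyyi$ and $\beta$-power terms as the technical heart. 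The only slips are cosmetic: $m^{*}$ comes from the deviation bound on $\max\{h_{a},h_{b}\}$ rather than from absorbing penalty bias, and the cone constant is the explicit $\alpha$ of \eqref{CstAlpha} rather than $3$.
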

\begin{proof}
See Section \ref{SecProThm}.
\end{proof}

Among all these constants, we can note that $s_{L}$, $\ell_{a}$, $\ell_{b}$, $h_{a}$ and $h_{b}$ are useful to properly describe and restrict $\Lambda$, the domain of validity of $(\lambda, \mu, \eta)$ for the theorem to hold. Once $\Lambda$ is fixed, the other constants take part in the upper bound of the estimation error. However, as it stands, the theorem is very difficult to interpret. The next two remarks seem essential to have an overview of the orders of magnitude involved for the number of observations, for $p$ and $q$, for the estimation error and for the regularization parameters. 

\begin{rem}[Validity band]
\label{RemVal}
Of course the degree of sparsity $\vert S \vert$ is crucial in the estimation error, but it also plays an indirect role in the probability associated with the theorem and in the numerous constants. In virtue of Lemma \ref{LemBorneFinale}, we can hope that $\lambda$ and $\mu$ have a wide validity band, by playing on $c_{\lambda}$, $c_{\mu}$, $d_{\lambda}$ and $d_{\mu}$. In turn, $\eta$ also has a non-negligible area of validity, provided of course that $\ell_{a}$, $\ell_{b}$ and $s_{L}$, all depending on combinations between $\oyxs$, $\oyyis$ and $L$, are small enough. Accordingly, it would be to our advantage if $L$ was both sparse and not chosen with too large elements. As it always appears together with $\eta$, we may as well take a normalized version of $L$ (\textit{e.g.} $\vert L \vert_{\infty} \leq 1$).
\end{rem}

\begin{rem}[Order of magnitude]
\label{RemOrderMag}
Even if the result holds for any $\beta \geq 1$, the terms $\propto p^{\, \beta-1}$ appearing in some upper bounds of the proof clearly argue in favor of a moderate choice $\beta \in [1, 1+\epsilon]$ for a small $\epsilon > 0$, depending on $p$. In other words, we cannot deviate too much from the Gaussianity in the prior on the direct links. For example in a very high-dimensional setting $(p \sim 10^7)$, choosing $\epsilon = 0.1$ leads to $p^{\, \beta-1} \approx 5$ whereas we may try larger values of $\epsilon$ for the more common high-dimensional settings $p \sim 10^3$ or $p \sim 10^4$. By contrast, we can see that $n_0$ must (at least) grow like $q$ for the theorem to hold, so high-dimensional responses are excluded. However in multiple-output regressions, even when $p$ is extremely large, $q$ generally remains small. According to all these considerations, we may roughly say that, in a high-dimensional setting with respect to $p$,
\begin{equation*}
\Vert \wht - \ts \Vert_{F}\, \lesssim\, \sqrt{\frac{\vert S \vert\, \ln p}{n}}
\end{equation*}
with a large probability, under a suitable regularization of the model. We recognize the usual terms appearing in the error bounds of regressions with high-dimensional covariates, like the $\ell_2$ error of the Lasso (see \textit{e.g.} Chap. 11 of \cite{HastieEtAl15}). This is the same bound as in \cite{YuanZhang14}, but our additional structural penalty restricts $\Lambda$.
\end{rem}

\section{Simulations and real dataset}
\label{SecEmp}

The minimization problem \eqref{DefEst} is solved using a coordinate descent procedure, alternating between the computations of
\begin{equation*}
\whoyy = \arg \min_{\dS_{++}^{\, q}}\, \ell_{\lambda, \mu, \eta}(\oyy, \whoyx) \hsp \text{and} \hsp \whoyx = \arg \min_{\dR^{q \times p}}\, \ell_{\lambda, \mu, \eta}(\whoyy, \oyx).
\end{equation*}
Each step is done by an Orthant-Wise Limited-Memory Quasi-Newton (OWL-QN) algorithm (see \textit{e.g.} \cite{AndrewGao07}). The first subproblem is performed through half-vectorization ($\vech$) to ensure symmetry and we set the objective to $+\infty$ on $\bar{\dS}_{++}^{\, q}$ to ensure positive definiteness of the solution. The coordinate descent is stopped when
\begin{equation*}
\Vert \whoyy^{\, (t)} - \whoyy^{\, (t-1)} \Vert_2\, \leq\, \epsilon \max(1, \Vert \whoyy^{\, (t-1)} \Vert_2) \hsp \text{and} \hsp \Vert \whoyx^{\, (t)} - \whoyx^{\, (t-1)} \Vert_2\, \leq\, \epsilon \max(1, \Vert \whoyx^{\, (t-1)} \Vert_2)
\end{equation*}
following two consecutive iterations $t-1$ and $t$, where $\epsilon > 0$ is a small threshold depending on the desired precision. We are now going to try our method on synthetic data first, and then on a real dataset. We will pay attention to the role played by $\beta$, in particular we will see that it can be useful as well as counterproductive, depending on the situations.

\subsection{Simulations}
\label{SecEmpSim} For each scenario, we first generate i.i.d. standard Gaussian vectors $X_{i} \in \dR^{p}$, then $Y_{i}\in \dR^{q}$ is simulated according to the setting and we estimate $\oyy$ and $\oyx$. From the relations detailed in Section \ref{SecIntro}, we recall that $Y_{i} = B^{\, t}\, X_{i} + E_{i}$ with $E_{i} \sim \cN(0, R)$ is an equivalent formulation, provided that $B = -\oyxt\, \oyyi$ and $R = \oyyi$. In a compact form, we may also write
\begin{equation*}
Y = X B + E \hsp \text{or} \hsp \vec(Y) = (I_{q} \otimes X)\, \vec(B) + \vec(E)
\end{equation*}
where the $i$-th row of $Y$ is $Y_{i}^{\, t}$ and the $i$-th row of $X$ is $X_{i}^{\, t}$. Thus, we can estimate $B$ using the Lasso (Las) and the Group-Lasso (GLas) in the vectorized form, to provide a basis for comparison between our method and the usual penalized methods. The Lasso penalty is obviously $\Vert \vec(B) \Vert_1$ to promote coordinate sparsity while, for the Group-Lasso, we use the penalty $\Vert B_{1} \Vert_2 + \hdots + \Vert B_{p} \Vert_2$ where $B_{i}$ is the $i$-th row of $B$, to promote row sparsity and exclude altogether some predictors from the model. We also implement some variants of our generalized graphical model (GenGm). The case where $\oyy = R^{-1}$ is known and does not need to be estimated is the Oracle (Or) and the case where $\eta=0$ so that $\beta$ has no influence is the classic PGGM (Gm). The case where $\lambda=0$ and $\beta=1$ is called the Spring (Spr) by the authors of \cite{ChiquetEtAl17}. We will focus on structured scenarios. With no structure in $\oyx$, there is no reason why our method should outperform the usual PGGM. In a completely random setting, we have observed that all PGGM procedures perform identically. In fact, a slight gain can be obtained compared to Spr and Gm simply due to the flexibility induced by the additional parameter (Spr and Gm are particular cases of GenGm). However, that clearly cannot counterbalance the extended computational times, and GenGm should not be used for such situations. The calibration of the regularization parameters is made using a cross-validation on a training set of size $n_{t} = 150$ and the accuracy is evaluated thanks to the mean squared prediction error (MSPE) on a validation set of size $n_{v} = 1000$,
\begin{equation}
\label{MSPE}
\textnormal{MSPE} = \frac{\big\Vert Y + X\, \whoyx^{\, t}\, \whoyy^{\, -1}  \big\Vert_{F}^2}{q\, n_{v}}.
\end{equation}
Due to the large amount of treatments, the grids for cross-validation are not very sharp here but they will be carefully refined for the real datasets of the next section. The covariance between the outputs is $R = (r^{\, \vert i-j \vert})_{1\, \leq\, i,\, j\, \leq\, q}$ for $r = \frac{1}{2}$ and we work with $p=100$. Each scenario is repeated $N = 500$ times and GenGm is evaluated with numerous values of $\beta$, from $0.25$ to 2 with a step of $0.25$. The results of the following scenarios are summarized on Figures \ref{FigSimS1}, \ref{FigSimS2} and \ref{FigSimS3} below, respectively.
\begin{itemize}[label=$\rightarrow$]
\item Scenario 1 ($q=1$). We draw $\omega_{i} = \pm \frac{1}{2}$ for $i=1, \hdots, 10$ and we fill 10 randomly selected sections of size $3$ in $\oyx$ with $\omega_{i}$. The remaining part of $\oyx$ is 0.
\item Scenario 2 ($q=2$). We draw $\omega = \pm \frac{1}{2}$ and one randomly selected row of $\oyx$ is filled with $\omega$ while the other is identically 0.
\item Scenario 3 ($q=3$). We draw $\omega_{i} = \pm \frac{1}{2}$ and we fill a randomly selected section of size $30$ on the $i$-th row of $\oyx$ with $\omega_{i}$, for $i=1,2,3$. The remaining part of $\oyx$ is 0.

\end{itemize}
The row structure is promoted by a normalized first finite difference operator
\begin{equation}
\label{MatL}
L = \frac{1}{2} \begin{pmatrix}
1 & - 1 & 0 & \hdots & 0 \\
-1 & 2 & \ddots & \ddots & \vdots \\
0 & \ddots & \ddots & \ddots & 0 \\
\vdots & \ddots & \ddots & 2 & -1 \\
0 & \hdots & 0 & -1 & 1
\end{pmatrix}
\end{equation}
which, through $\oyx\, L\, \oyxt$, tends to penalize the difference between two consecutive values on a same row (as does Fused-Lasso with $\ell_1$ penalty). Yet, the Fused-Lasso is not a suitable alternative to GLas and Las in this precise context because $B = -\oyxt\, \oyyi$ is not supposed to have a row structure even if $\oyx$ has one. For this choice of $L$, one can note that, in the particular case where $R = \diag(\sigma^{\, 2}_1, \hdots, \sigma^{\, 2}_{q})$,
\begin{equation*}
\llangle L, \oyx^{\, t}\, \oyyi\, \oyx \rrangle^{\beta} = \left( \sum_{i=1}^{q} \sigma^{\, 2}_{i} \sum_{j=2}^{p} (\omega_{i,j} - \omega_{i,j-1})^2 \right)^{\! \beta} \geq \sum_{i=1}^{q} \sigma_{i}^{\, 2 \beta} \sum_{j=2}^{p} \vert \omega_{i,j} - \omega_{i,j-1} \vert^{\, 2 \beta}
\end{equation*}
where $\omega_{i,j}$ is the $(i,j)$-th element of $\oyx$, so we may fairly expect that $\beta \geq 1$ is going to strengthen the smoothness of the estimation and to enforce all the more the structuring.

\begin{rem}[Validity of the hypotheses]
\label{RemHyp}
We could as well add a small diagonal element in the matrix $L$ defined above, positive semi-definite but not invertible. The resulting effect would be a negligible ridge-like penalization on the elements of $\oyx$. This is not required for the estimation procedure but useful for Theorem \ref{ThmUppBound} to hold (see \textit{e.g.} \eqref{H1}). Likewise, it seemed interesting to test some settings with $\beta < 1$ even if the theory developped in the paper does not give any guarantee for them, as a basis for comparison.
\end{rem}

\begin{figure}[h!]
\centering
\includegraphics[width=12cm]{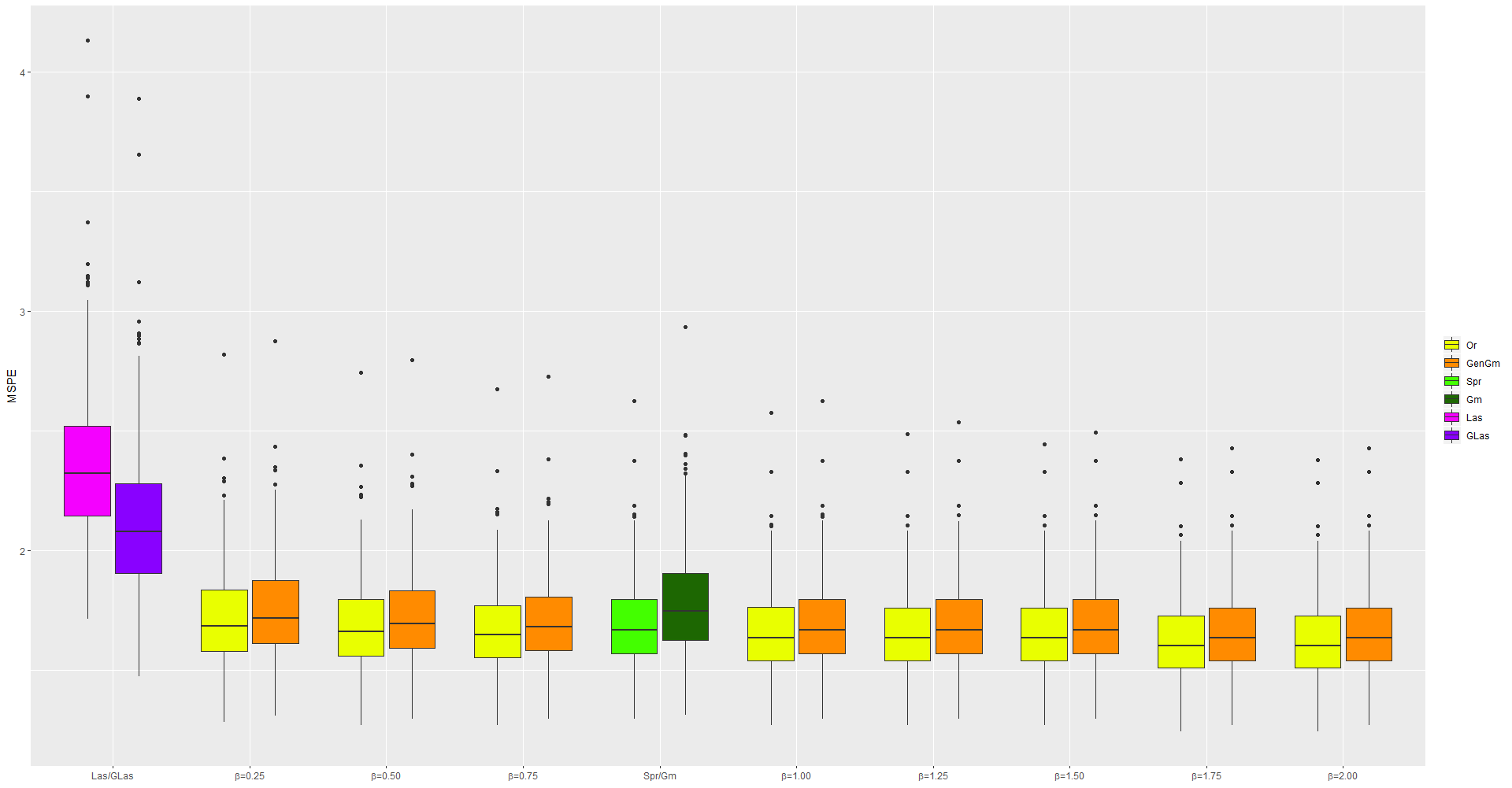}
\caption{Mean squared prediction error for $N=500$ repetitions of the weakly structured Scenario 1.}
\label{FigSimS1}
\end{figure}

\begin{figure}[h!]
\centering
\includegraphics[width=12cm]{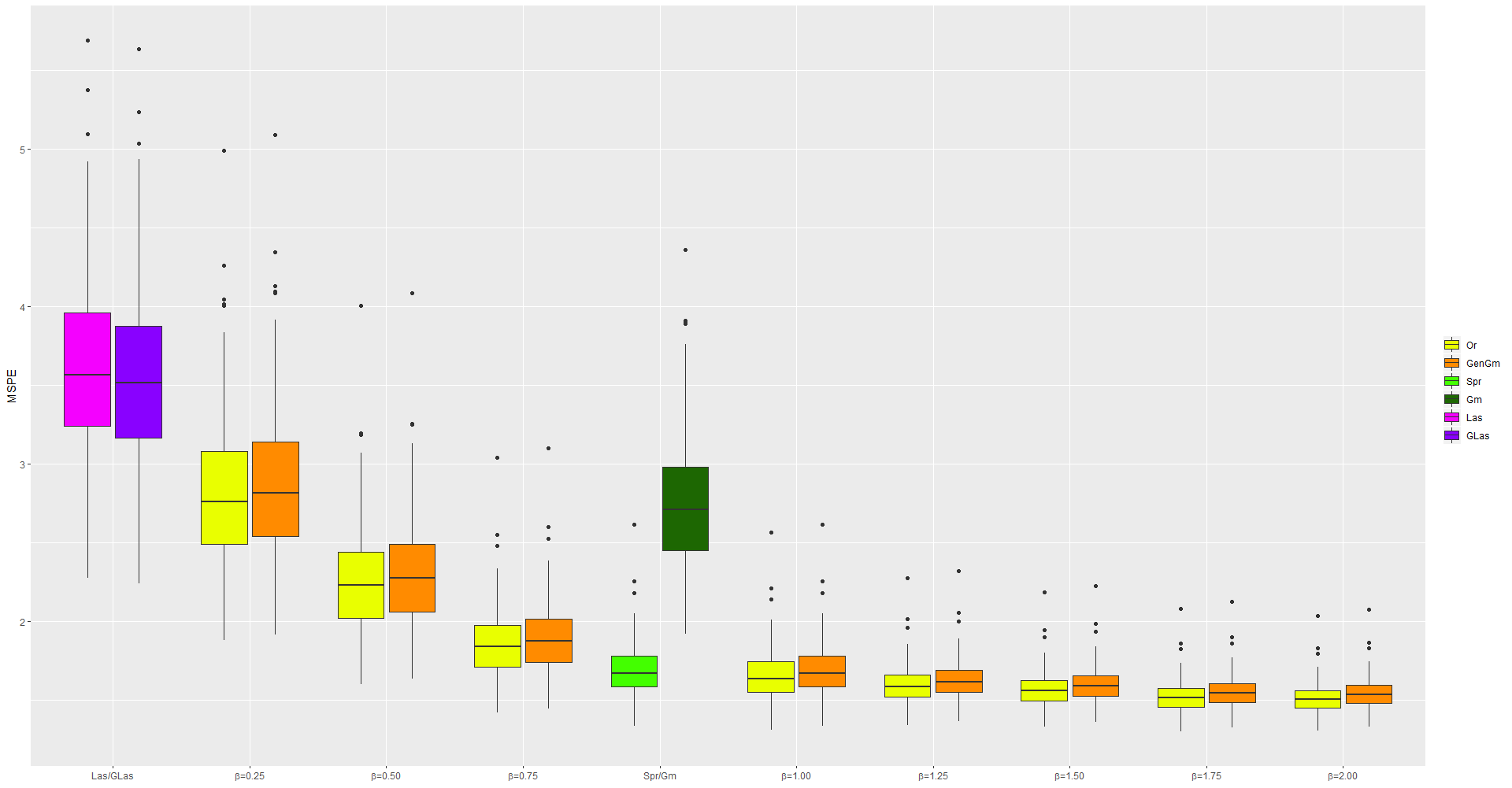}
\caption{Mean squared prediction error for $N=500$ repetitions of the strongly structured Scenario 2.}
\label{FigSimS2}
\end{figure}

\begin{figure}[h!]
\centering
\includegraphics[width=12cm]{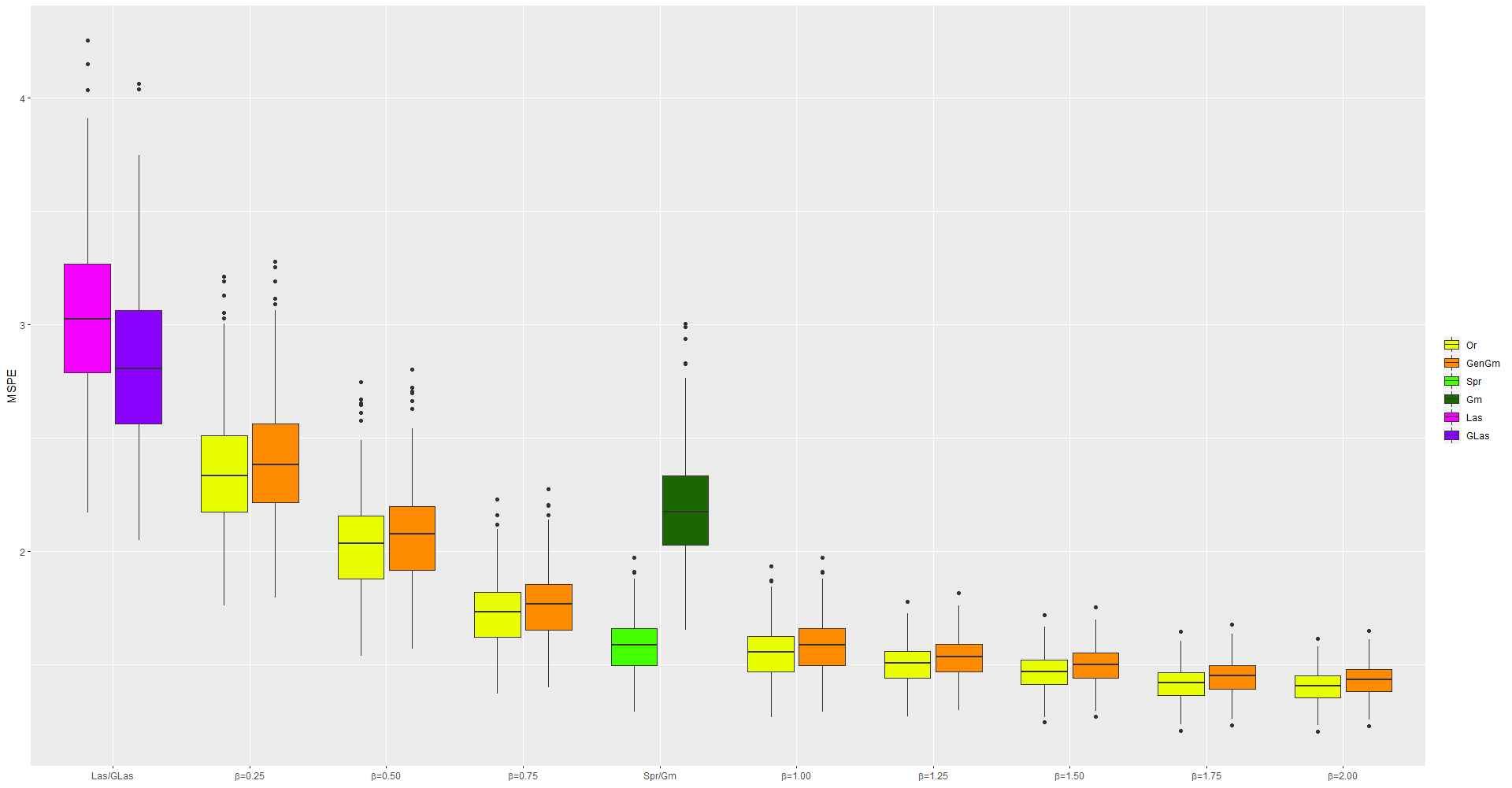}
\caption{Mean squared prediction error for $N=500$ repetitions of the strongly structured Scenario 3.}
\label{FigSimS3}
\end{figure}

First of all, one can observe that Las and GLas are left behind in all our simulations. This is not surprising since the covariance between the outputs cannot be recovered with the standard Lasso, at least for $q \geq 2$. Generally, GLas remains more robust compared to Las, probably due to the high level of sparsity in $\oyx$ approximately passed to $B$ (provided that the covariances in $R$ are small enough), and exploited by the grouping effect. In the weakly structured setting (Scenario 1), we also observe that, as expected, all PGGM procedures perform almost identically, with obviously an advantage for Or (although small, illustrating the accuracy of the estimation). In the strongly structured settings (Scenarios 2 and 3), Gm gives results below the expected level, because it is not designed to promote such layouts. On the contrary, thanks to this choice of $L$ showing here geat efficiency, GenGm and Spr are doing pretty well. Note that, in this context, GenGm with $\beta=1$ is almost the same as Spr since, $q$ being small, $\lambda$ does not play a crucial role. However, some empirical facts draw our attention: the prediction error decreases with $\beta$ to some extent, but the most interesting fact seems to be the simultaneous decrease of its variance. It is likely that the increasing pressure exerted by $\beta$ on the estimation procedure leads to a higher homogeneity in the numerical results, despite the repetitions of random experiments under random settings. In other words, the structuring seems to be strengthened and we also observe an acceleration in the procedure of estimation that logically follows from the latter remarks (especially clear when we compare $\beta=0.25$ and $\beta=2$). On the other hand, for the opposite reason, we notice that the predictions are hardly better than Gm (even worse in some cases), both on average and in terms of variability, for $\beta < 1$, and these simulations tend to undermine such values of the hyperparameter. On the whole, GenGm with $\beta > 1$ might be a sound approach for practitioners who place a high priority on structuring the estimations, even if Remark \ref{RemCompTim} below should probably temper this statement. To conclude, let us consider the strongly structured scenarios with $L = I_{p}$ (without structuring) in the Oracle setting with $\beta=2$, and let us compare the results with those of Figures \ref{FigSimS2} and \ref{FigSimS3}, obtained with the correct version of $L$ given in \eqref{MatL}. The results are displayed on Figure \ref{FigSimUnstruct} where we can see that the benefit of structuring is manifest. Unsurprisingly, the results without structuring are close to those of Gm since $L=I_{p}$ only strengthens the shrinkage effect with ridge-like additional penalties.

\begin{figure}[h!]
\centering
\includegraphics[width=6cm]{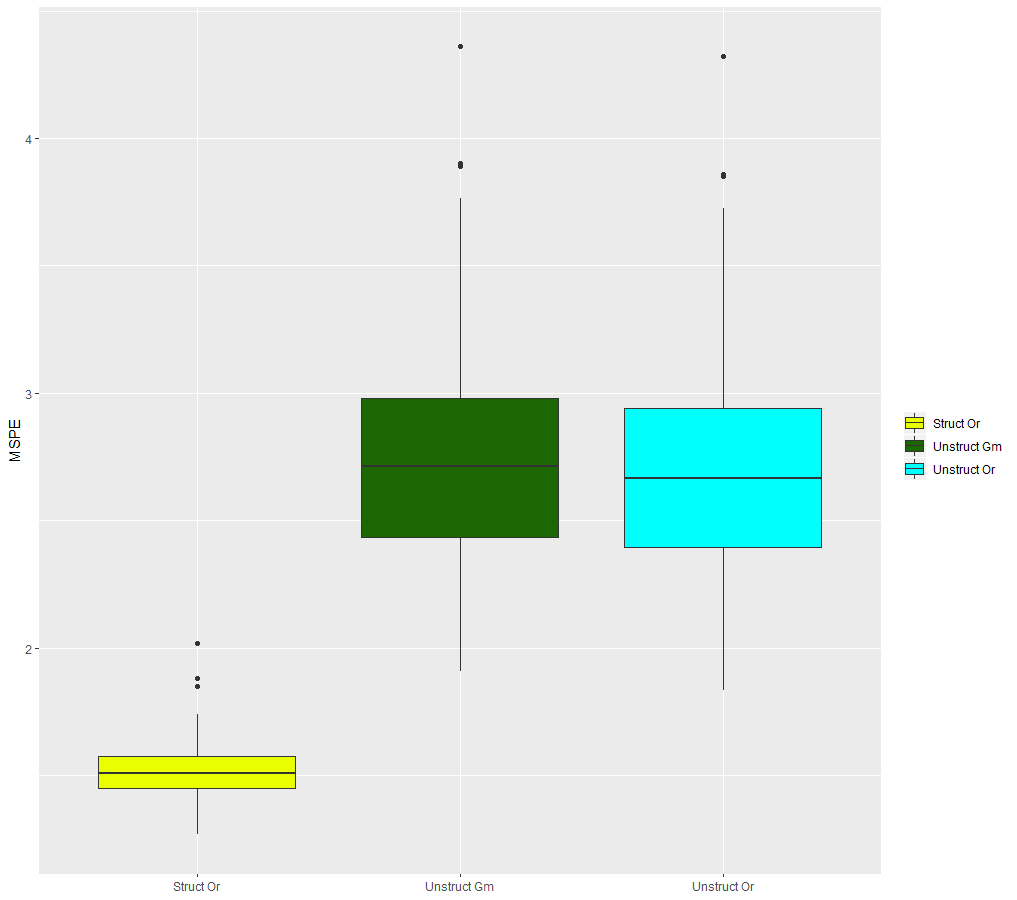} \hsp \includegraphics[width=6cm]{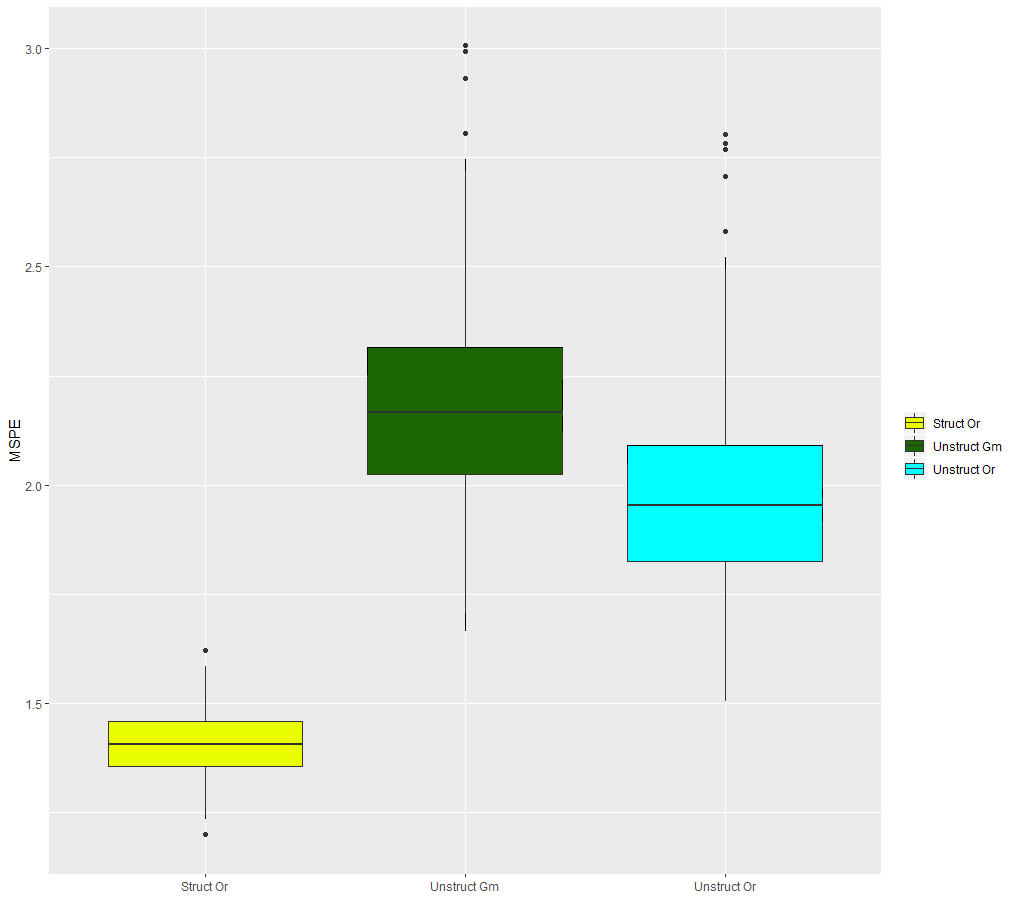}
\caption{Mean squared prediction error for $N=500$ repetitions of the strongly structured Scenario 2 (left) and Scenario 3 (right) for Or, Gm and the unstructured Or ($L = I_{p}$), with $\beta=2$.}
\label{FigSimUnstruct}
\end{figure}

\begin{rem}[Computational time]
\label{RemCompTim}
To estimate $(\oyy, \oyx)$ in the model Spr, the authors of \cite{ChiquetEtAl17} use a very judicious and efficient method relying, in each step of the coordinate descent procedure, on a direct computation of the estimation of $\oyy$ together with an Elastic-Net estimation of $\oyx$. This is possible for $\lambda=0$ and $\beta=1$, but unfortunately cannot be implemented in the general setting. As a result, computational times remain an issue that should be paid attention to.
\end{rem}

\begin{rem}[Oracle-type errors]
\label{RemOracle}
The mean value of the estimation errors $\Vert \whoyx - \oyx \Vert_{F}^2$ leads to the same kind of observations for the models being compared in the simulations. But the minimal prediction error does not always coincide with an optimal support recovery due to the shrinkage effect on the estimation of $\oyx$. The so-called $F$-score is given by
\begin{equation*}
F = \frac{2\, p_{r}\, r_{e}}{p_{r}+r_{e}} \hsp \text{where} \hsp p_{r} = \frac{\textnormal{TP}}{\textnormal{TP} + \textnormal{FP}} \hsp \text{and} \hsp r_{e} = \frac{\textnormal{TP}}{\textnormal{TP} + \textnormal{FN}}
\end{equation*}
are the \textit{precision} and the \textit{recall}, respectively, and where T/F and P/N stand for true/false and positive/negative. In the strongly structured scenarios, $F$ is generally located between $0.60$ and $0.65$, and a deeper analysis shows that a proportion of more than $0.99$ of true non-zero values are recovered (that is, the part of the true active set $S$ related to $\oyx$). If the models are not calibrated to reach the best prediction error but the best $F$-score, $F$ regularly exceeds $0.90$, at least for the structured procedures.
\end{rem}

Nevertheless, Scenarios 2 and 3 are very strongly structured, more than one would expect from an unknown underlying generating process, and the real dataset of the next section is going to highlight the fact that the improvement may be hardly noticeable with respect to $\beta$, even in terms of prediction error variance. But we will see that $\beta$ can still be useful for variable selection.

\subsection{A real dataset}
\label{SecEmpReal}

The dataset available as \texttt{CanadianWeather} in the \texttt{R} package \texttt{fda} contains daily temperature and precipitation at 35 different locations in Canada, averaged over annual reports starting in 1960 and ending in 1994 (see \textit{e.g.} \cite{RamsaySilverman06}). We intend to look at the direct links between the minimal and maximal rainfall (on the log$_{10}$ scale) and the temperature pattern in the 35 weather stations, so as to identify the times of the year that have a strong effect on rainfall (positive as well as negative). In this context, $n=35$, $q=2$ and $p=365$. Figure \ref{FigPrecDistr} shows temperature and log-precipitation measured over a year in Montreal, chosen as an example, together with the empirical distribution of the minimal and maximal log-precipitation for the 35 weather stations. We can note that, since the data are averaged over numerous years, outliers are unlikely even for the extremes (min and max).

\begin{figure}[h!]
\centering
\includegraphics[width=9cm, height=6cm]{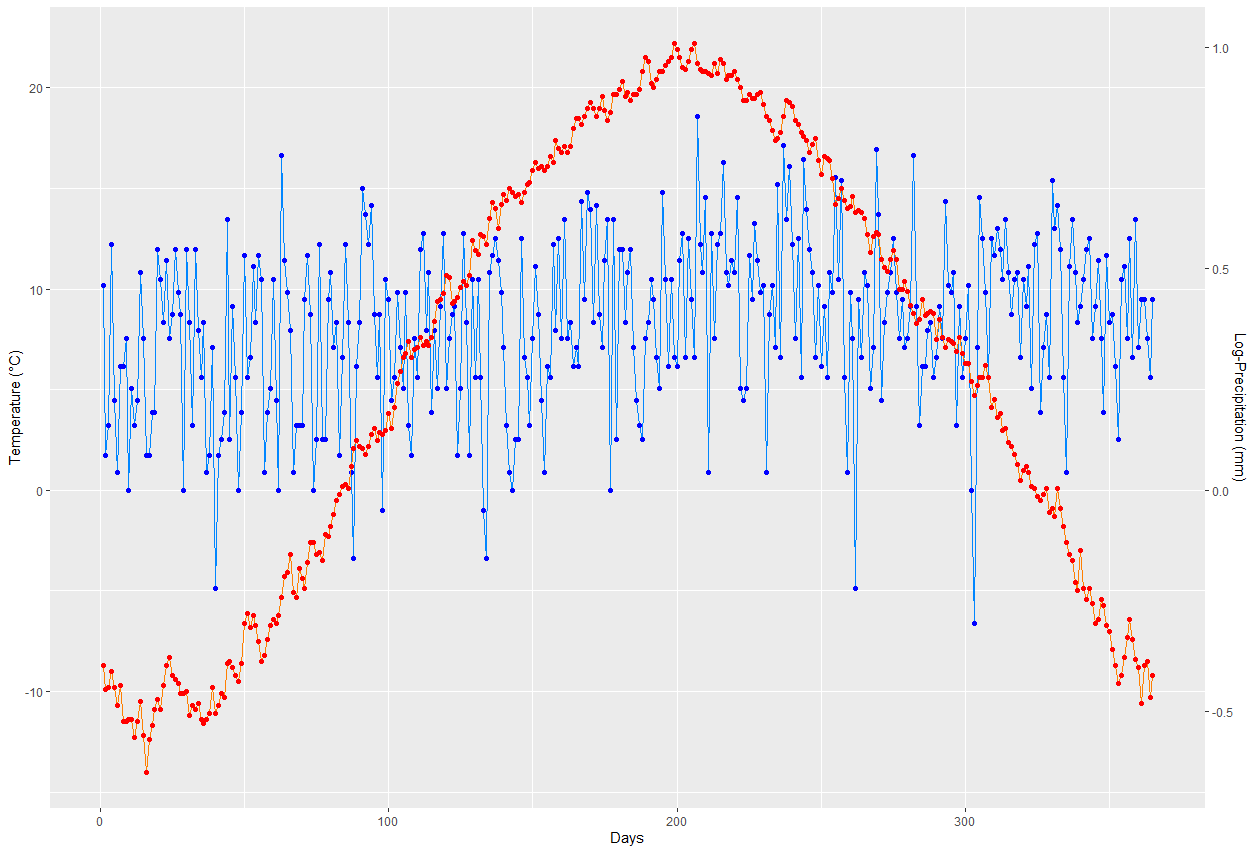} \hsp \includegraphics[width=6cm]{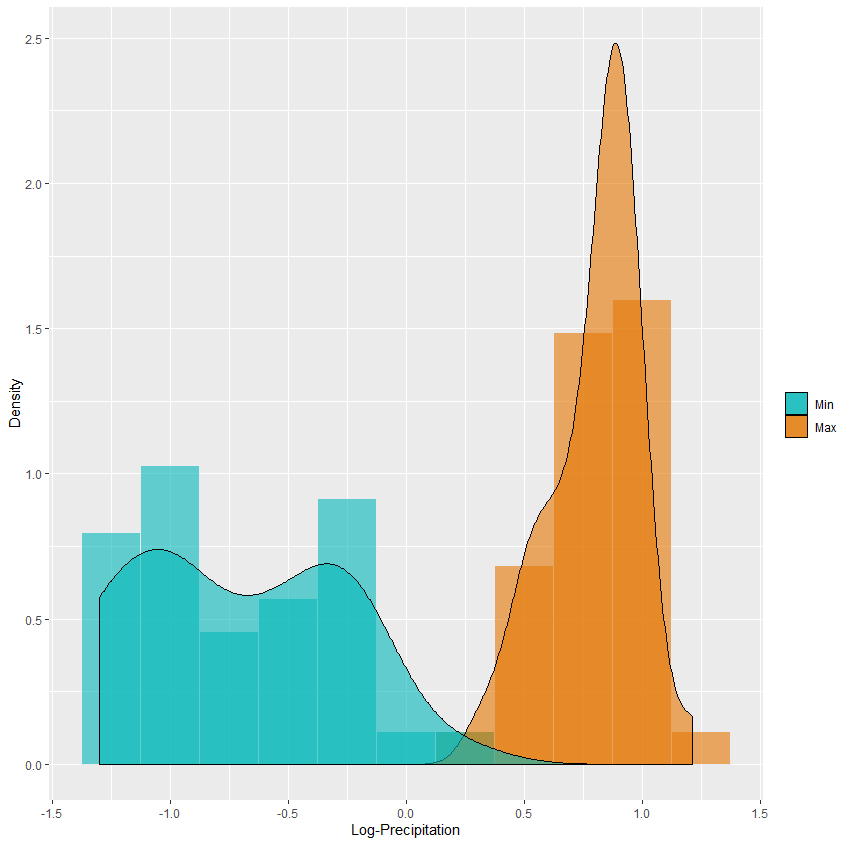}
\caption{Temperature and log-precipitation measured over a year in Montreal (left). Empirical distribution of the minimal and maximal log-precipitation for the 35 weather stations (right).}
\label{FigPrecDistr}
\end{figure}

Some authors (see \textit{e.g.} \cite{Slawski12}) have already highlighted the pertinence of using the matrix $L$ defined in \eqref{MatL} in this dataset, because the predictors are ordered temporally so that the selection of isolated days instead of relevant sequences of days seems an unreliable procedure for statistical interpretation. To assess the models, we repeat $N=100$ times the following experiment: $n_{t} = 25$ observations are randomly selected for calibration (\textit{via} 2-fold cross-validation) and estimation, the remaining $n_{v}=10$ observations are used to compute the MSPE \eqref{MSPE} related to the prediction of the minimum ($\min_{\text{p}}$) and maximum ($\max_{\text{p}}$) precipitation. We can see on Figure \ref{FigPrecMSPE} that all structured PGGM perform almost identically, with the phenomenon described in the previous section still visible but to a lesser extent. We can even notice that structuring is hardly beneficial for this dataset, from a purely numerical point of view. This conclusion can also be found in \cite{Slawski12}, where the author compares the structured Elastic-Net with unstructured alternatives to predict the $0.25$-, $0.50$- and $0.75$-quantiles of the log-precipitation. But we will see that, in terms of variable selection and statistical interpretation, $L$ and $\beta$ still have a substancial role to play.

\begin{figure}[h!]
\centering
\includegraphics[width=6cm]{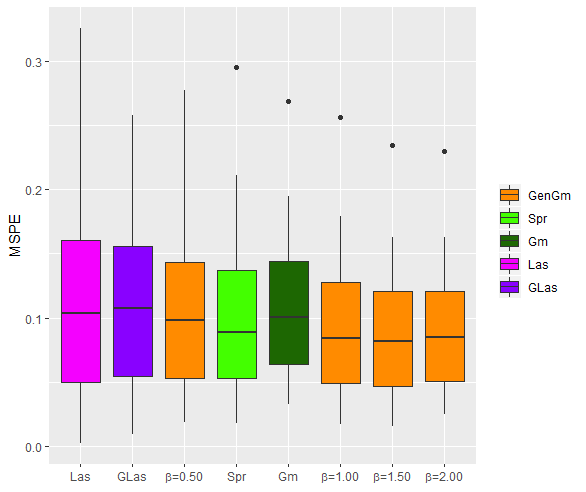}
\caption{Mean squared prediction error for $N=100$ repetitions of the experiment. GenGm for $\beta \in \{ 0.5, 1, 1.5, 2\}$ is compared with Spr, Gm, Las and GLas.}
\label{FigPrecMSPE}
\end{figure}

The point is that the best prediction error does not usually coincide with a sparse solution (see Remark \ref{RemOracle} above). On the basis of the MSPE, most of the time we must retain $\mu \ll 10^{-2}$ and only a few direct links are set to zero. To look for sequences of days directly related to $\min_{\text{p}}$ and $\max_{\text{p}}$, let us constraint $\mu \geq 10^{-2}$ and focus on variable selection. The active set of $\oyx$ is evaluated on the basis of $n_{t} = 25$ randomly chosen observations. The experiment is repeated $N=100$ times, and the locations having a frequency of occurrence that exceeds $0.5$ are retained (or, equivalently, those whose estimates have a non-zero median). This can be seen as a measure of variable importance. The results are given on Figures \ref{FigPrecMin} and \ref{FigPrecMax} below for $\min_{\text{p}}$ and $\max_{\text{p}}$, respectively, with a fixed set of regularization parameters and increasing values of $\beta$. The objective is to show the influence of the latter, all other things being equal. Note that, since we retain $\lambda=0$ in these experiments, GenGm for $\beta=1$ coincides with Spr.

\begin{figure}[h!]
\centering
\includegraphics[width=14cm]{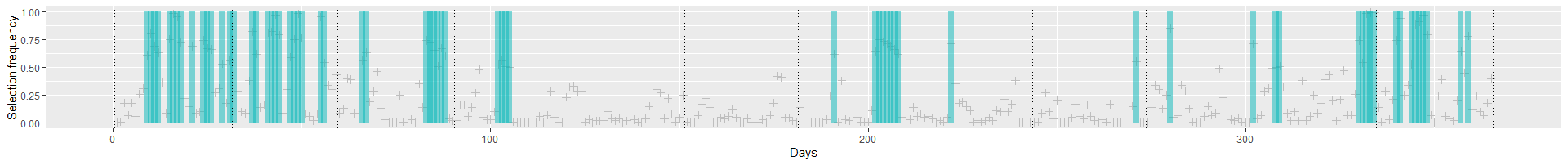}
\includegraphics[width=14cm]{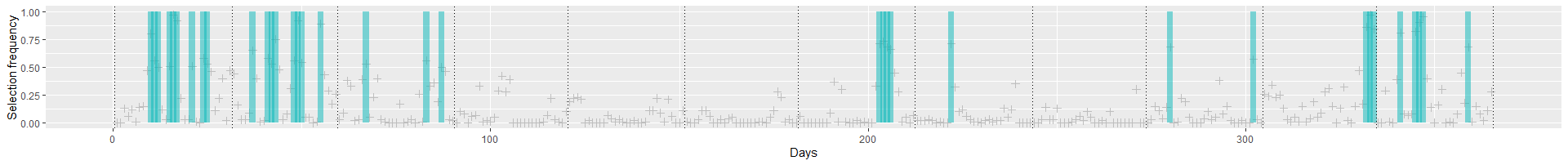}
\includegraphics[width=14cm]{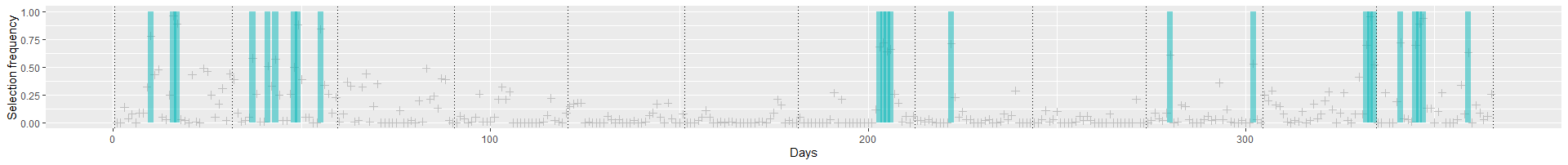}
\includegraphics[width=14cm]{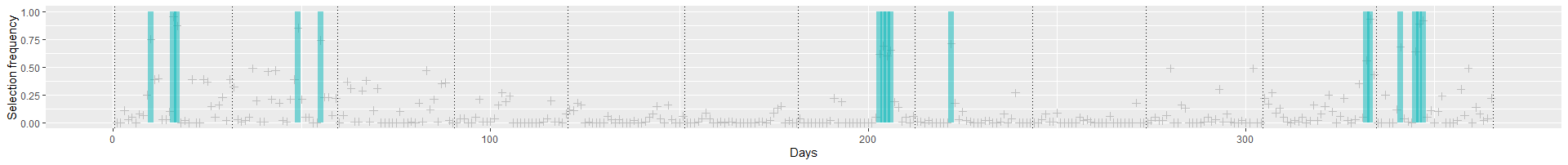}
\caption{Variable selection for $\min_{\text{p}}$ by GenGm with $(\lambda, \mu, \eta)=(0, 0.05, 1)$ and, from top to bottom, $\beta \in \{0.5, 1, 1.5, 2\}$. The colored areas highlight the days having a frequency of occurrence, represented by gray crosses, that exceeds $0.5$ in the $N=100$ repetitions of the experiment. Dotted lines divide the panel into months.}
\label{FigPrecMin}
\end{figure}

\begin{figure}[h!]
\centering
\includegraphics[width=14cm]{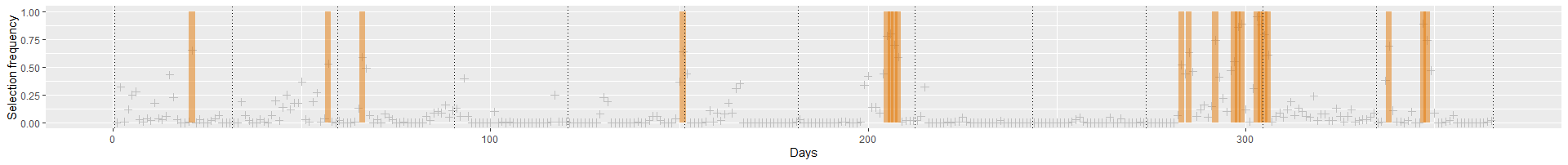}
\includegraphics[width=14cm]{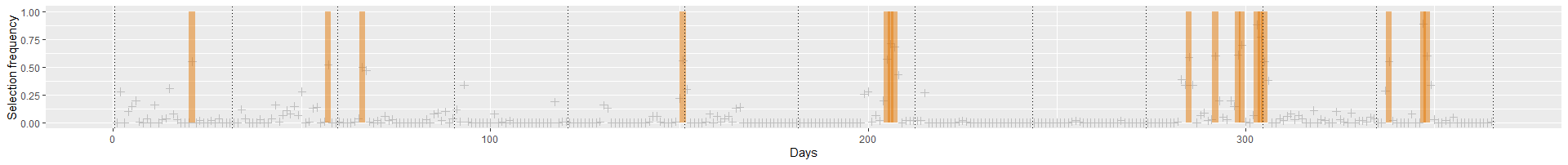}
\includegraphics[width=14cm]{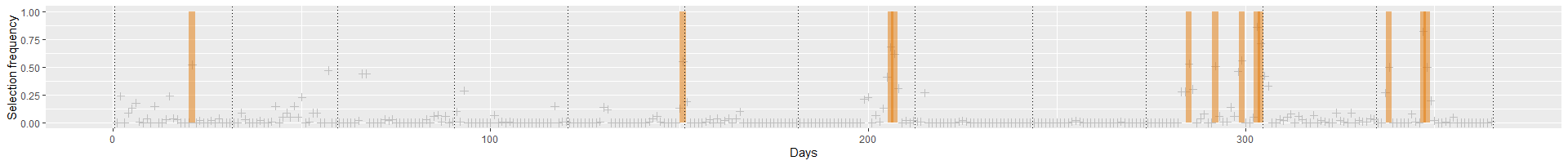}
\includegraphics[width=14cm]{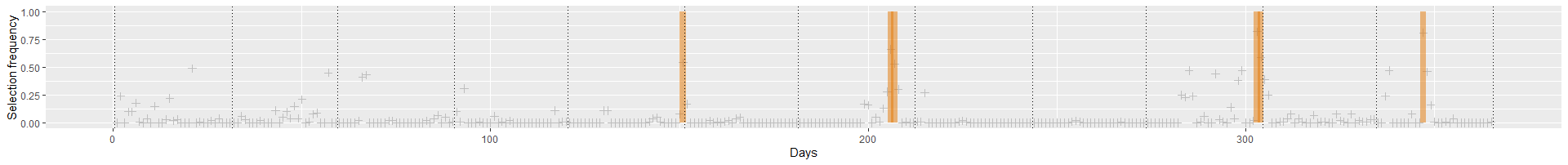}
\caption{Variable selection for $\max_{\text{p}}$ by GenGm with $(\lambda, \mu, \eta)=(0, 0.05, 1)$ and, from top to bottom, $\beta \in \{0.5, 1, 1.5, 2\}$. The colored areas highlight the days having a frequency of occurrence, represented by gray crosses, that exceeds $0.5$ in the $N=100$ repetitions of the experiment. Dotted lines divide the panel into months.}
\label{FigPrecMax}
\end{figure}

We observe that the increasing pressure exerted by $\beta$ on the estimation procedure tends to refine the selection by giving priority to the most important variables and by dropping the others much more easily, at the cost of prediction results: 
we are undoubtedly in a selection process. The sequence of inclusions
\begin{equation*}
\wh{S}_{\beta_2} \subset \wh{S}_{\beta_1} \hsp \text{for $\beta_1 < \beta_2$}
\end{equation*}
that we observe for the estimated active sets is clearly a guarantee of quality for the selected variables. The median values of the estimated direct links between the temperature of the days and the pair $(\min_{\text{p}}, \max_{\text{p}})$ are represented on Figure \ref{FigPrecEst} together with the estimated regression coefficients, for $\beta=2$. We detect sequences of influent days in November, December, January and February, especially related to $\min_{\text{p}}$, positively at the end of the year and negatively at the beggining. This is broadly consistent with the analysis of \cite{Slawski12} -- even if the responses are not extremes but quantiles in this reference -- with however two differences: the regression coefficients associated with $\max_{\text{p}}$ are much lower, and an activity is also detected between July and August. The main explanation, at least for the first of them, probably lies in the use of graphical models that take into account the correlation between responses. Indeed, as can be seen on Figure \ref{FigPrecEstVar} which gives an overview of the estimation of $R$ obtained from the repeated experiments, a non-zero correlation is detected between the responses ($\approx 0.32$). The influence of November and December on all quantiles and that of January and February on the 0.75-quantile in \cite{Slawski12} might actually be an artificial effect of the correlation with the 0.25-quantile. This is what our study suggests by highlighting $\min_{\text{p}}$ compared to $\max_{\text{p}}$. From this point of view, the interest of graphical models is particularly obvious.

\begin{figure}[h!]
\centering
\includegraphics[width=14cm]{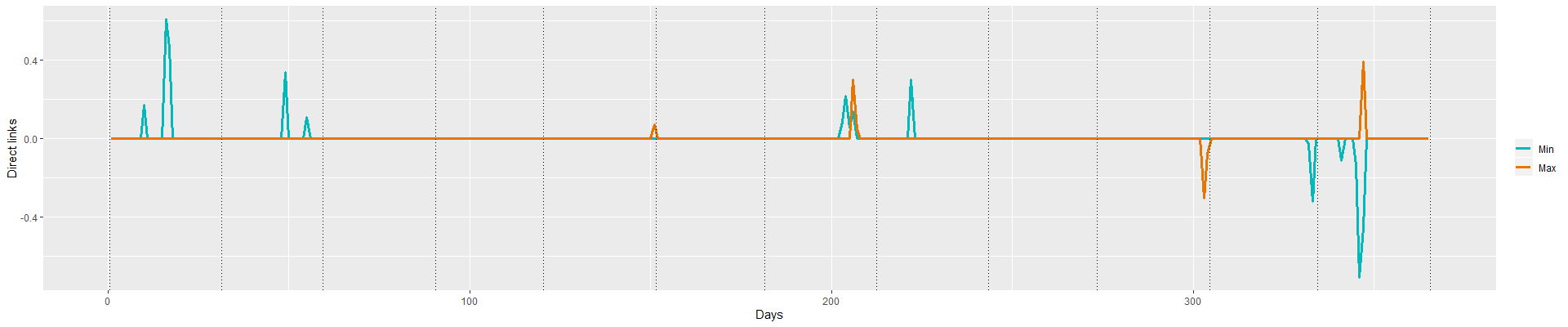}
\includegraphics[width=14.1cm]{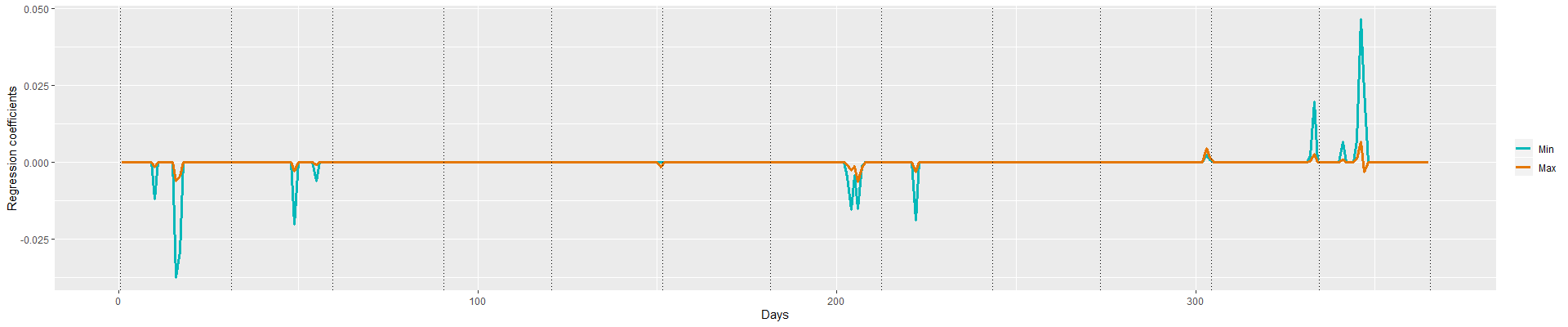}
\caption{Estimated direct links (top) and regression coefficients (bottom) for the pair $(\min_{\text{p}}, \max_{\text{p}})$ by GenGm with $(\lambda, \mu, \eta)=(0, 0.05, 1)$ and $\beta = 2$, after the $N=100$ experiments. Dotted lines divide the panel into months.}
\label{FigPrecEst}
\end{figure}

\begin{figure}[h!]
\centering
\includegraphics[width=4cm]{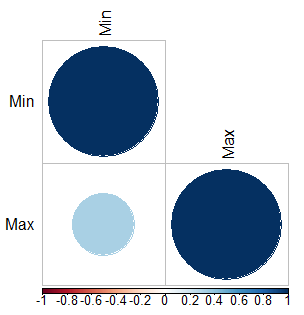}
\caption{Estimated correlation between $\min_{\text{p}}$ and $\max_{\text{p}}$ by GenGm with $(\lambda, \mu, \eta)=(0, 0.05, 1)$ and $\beta = 2$, after the $N=100$ experiments. The off-diagonal entry is approximately $0.32$.}
\label{FigPrecEstVar}
\end{figure}

Let us also mention that, interestingly enough, we notice that the role of $\eta$ tends to depreciate for the large values of $\beta$. For example, for the same regularization parameters $(\lambda,\mu)=(0, 0.05)$ and $\beta=2$, the difference between the estimated active sets for $\eta=0.1$ and $\eta=1$ is almost negligible (depending on the experiments, between 1 and 3 days are concerned, on average). Based on these studies and observations, we might conclude that $\beta$ is insignificant when we are interested in the best prediction error on a validation set (even counterproductive with respect to computational times, \textit{e.g.} compared to Spr), whereas it seems to have a substancial role to play when focusing on selection, by accelerating the discrimination of variables. In the first case, $\eta$ has to be carefully adjusted while in the second case, $\beta$ will quickly help to reach the desired sparsity.

\begin{rem}[Structure matrix]
\label{RemStructMat}
For the simulations and the real dataset, we have used the popular first finite difference operator given in \eqref{MatL}. Other examples can be found in the literature, like the promotion of a genetic distance for genomic selection in \textit{Brassica napus} \cite{ChiquetEtAl17} or the bidimensional discretization of the Laplacian to work on handwritten digit recognition \cite{Slawski12}. More generally, $L$ can be used in a classic Bayesian prior supposed to promote some covariance structure on the direct links, with no `physical' structuring in mind (like temporal, spatial or genetic proximity).
\end{rem}

\section{Conclusion}
\label{SecConclu}

In conclusion, our work is a generalization of \cite{YuanZhang14}, using the same technical tools to establish an upper bound on the estimation error when a prior on the direct links generates an additional structural penalty in the objective, provided that the model is suitably regularized. Our work is also an improvement of \cite{ChiquetEtAl17} since, while being inspired by the methodology of the authors, we generalize the prior and give some theoretical guarantees. The empirical study shows that the hyperparametrization in the prior, although more expensive in adjusting the parameters, is likely to refine the selection results but clearly, this does not appear as a crucial improvement compared to the two previous points. Let us conclude the paper by highlighting two weaknesses that might be trails for future studies. On the one hand, the Laplace distribution is often used as a prior in the Bayesian Lasso (see \textit{e.g.} Sec. 6.1 of \cite{HastieEtAl15}). However, our reasonings do not allow $\beta=1/2$, which may correspond to a multivariate Laplace distribution on the direct links. Combined with the first finite difference operator $L$, the choice $\beta=1/2$ could generate a Fused-Lasso-type penalty. In this regard, it would be challenging and interesting to obtain some theoretical guarantees for $\beta \geq 1/2$ and not only for $\beta \geq 1$, even if our probably too brief simulation study does not encourage the choice of $\beta < 1$. On the other hand, $\lambda=0$ is a natural choice when $q$ is small (this is in particular the configuration of \cite{ChiquetEtAl17}), not to mention that it is computionally faster. But, the proof of our theorem needs $\lambda > c_{\lambda}\, h_{a} > 0$ to hold. We think that a reasoning enabling to deal with $\lambda = 0$ should also be beneficial to the study. More generally, it would be instructive to consider a very high-dimensional setting ($p \gg n$ and not only $p \sim 10^2$ although always larger than $n,$ as in our experiments). Such studies should follow with omic data.

\bigskip

\noindent \textbf{Acknowledgements and Fundings.} The authors warmly thank the two anonymous reviewers for the careful reading and for making numerous useful corrections to improve the paper. We thank ALM (Angers Loire M\'etropole) and the ICO (Institut de Canc\'erologie de l'Ouest) for the financial support. This work is partially financed through the ALM grant and the ``Programme op\'erationnel r\'egional FEDER-FSE Pays de la Loire 2014-2020" noPL0015129 (EPICURE). The authors also thank Mario Campone (project leader and director of the ICO), Mathilde Colombi\'e (scientific coordinator of EPICURE clinical trial) and Fadwa Ben Azzouz, biomathematician in Bioinfomics, for the initiation, the coordination and the smooth running of the project.

\section{Technical proofs}
\label{SecPro}

We start in a first part by some useful linear algebra lemmas that will be repeatedly used subsequently, well-known for most of them. In a second part, we prove the joint convexity of the objective and our main result.

\subsection{Linear algebra}
\label{SecProLinAlg}

\begin{lem}
\label{LemProdSDP}
Let $A \in \dS_{+}^{\, d}$ and $U \in \dR^{d \times \ell}$. Then, $U^{\, t} A U \in \dS_{+}^{\, \ell}$.
\end{lem}
\begin{proof}
Since $A$ is symmetric with non-negative eigenvalues, there is an orthogonal matrix $P$ such that $A = P D P^{\, t}$ with $D = \diag(\textnormal{sp}(A))\in \dS_{+}^{\, d}$. Thus, for all $v \in \dR^{\ell}$, it follows that $\langle v, U^{\, t} A\, U\, v \rangle = \Vert D^{1/2}\, P^{\, t}\, U\, v \Vert^2 \geq 0$. 
\end{proof}

\begin{lem}
\label{LemProdVP}
Let $A \in \dS_{++}^{\, d}$ and $B \in \dS_{+}^{\, d}$. Then for all $i$, $\lambda_{i}(AB) \geq 0$.
\end{lem}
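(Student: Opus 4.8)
The plan is to reduce the statement to Lemma \ref{LemProdSDP} by a similarity transformation. Since $A \in \dS_{++}^{\, d}$, it admits a symmetric, positive definite (hence invertible) square root $A^{1/2} \in \dS_{++}^{\, d}$. First I would write
\begin{equation*}
AB \, = \, A^{1/2}\, \big( A^{1/2}\, B\, A^{1/2} \big)\, A^{-1/2},
\end{equation*}
so that $AB$ is similar to $M := A^{1/2}\, B\, A^{1/2}$ and therefore has the same spectrum, with matching multiplicities; in particular $\lambda_{i}(AB) = \lambda_{i}(M)$ for all $i$.

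Next, I would apply Lemma \ref{LemProdSDP} with the symmetric choice $U = A^{1/2}$ (so that $U^{\, t} = U$) and the positive semi-definite matrix $B$, which yields $M = U^{\, t}\, B\, U \in \dS_{+}^{\, d}$. Consequently $M$ is symmetric with non-negative eigenvalues, and combining this with the previous step gives $\lambda_{i}(AB) = \lambda_{i}(M) \geq 0$ for every $i$, which is the claim.

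There is no real obstacle here; the only point worth a word is the existence of $A^{1/2}$. This follows from the spectral decomposition $A = P\, \diag(\textnormal{sp}(A))\, P^{\, t}$ already invoked in the proof of Lemma \ref{LemProdSDP}: one sets $A^{1/2} = P\, \diag(\textnormal{sp}(A))^{1/2}\, P^{\, t}$, and since $A \in \dS_{++}^{\, d}$ all the eigenvalues are strictly positive, so $A^{1/2}$ is well defined, symmetric and invertible, with $A^{-1/2} = (A^{1/2})^{-1}$.
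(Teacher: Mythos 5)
Your proof is correct and follows exactly the paper's argument: the similarity $AB = A^{1/2}(A^{1/2} B A^{1/2})A^{-1/2}$ followed by an application of Lemma \ref{LemProdSDP} with $U = A^{1/2}$. The extra remark on the existence and invertibility of $A^{1/2}$ is a harmless elaboration of what the paper leaves implicit.
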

\begin{proof}
The equality $A B = A^{1/2}\, (A^{1/2}\, B\, A^{1/2})\, A^{-1/2}$ shows that $A B$ and $A^{1/2}\, B\, A^{1/2}$ are similar, so they must share the same eigenvalues. From Lemma \ref{LemProdSDP}, $\lambda_{i}(A^{1/2}\, B\, A^{1/2}) \geq 0$ .
\end{proof}

\begin{lem}
\label{LemProdEncVP}
Let $A \in \dS_{+}^{\, d}$ and $B \in \dS_{+}^{\, d}$. Then,
\begin{equation*}
\lmin(A)\, \tr(B)\, \leq\, \tr(A B)\, \leq\, \lmax(A)\, \tr(B).
\end{equation*}
\end{lem}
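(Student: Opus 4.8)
The plan is to reduce to a diagonal computation by orthogonally diagonalizing $A$. Since $A \in \dS_{+}^{\, d}$, write $A = P D P^{\, t}$ with $P$ orthogonal and $D = \diag(\lambda_1(A), \ldots, \lambda_d(A))$, where all $\lambda_i(A) \geq 0$. Using the cyclic invariance of the trace,
\begin{equation*}
\tr(AB) = \tr(P D P^{\, t} B) = \tr\!\big(D\, P^{\, t} B P\big) = \tr(D C), \qquad C := P^{\, t} B P.
\end{equation*}
By Lemma \ref{LemProdSDP} applied with $U = P$, the matrix $C$ lies in $\dS_{+}^{\, d}$; moreover, since $P$ is orthogonal, $\tr(C) = \tr(P^{\, t} B P) = \tr(B\, P P^{\, t}) = \tr(B)$.

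Next I would exploit that $C$ is positive semi-definite to control its diagonal: for each index $i$ one has $C_{ii} = \langle e_i, C\, e_i \rangle \geq 0$. Hence $\tr(DC) = \sum_{i=1}^d \lambda_i(A)\, C_{ii}$ is a sum with non-negative weights $C_{ii}$, and bounding each $\lambda_i(A)$ between $\lmin(A)$ and $\lmax(A)$ yields
\begin{equation*}
\lmin(A) \sum_{i=1}^d C_{ii} \;\leq\; \sum_{i=1}^d \lambda_i(A)\, C_{ii} \;\leq\; \lmax(A) \sum_{i=1}^d C_{ii}.
\end{equation*}
Since $\sum_{i=1}^d C_{ii} = \tr(C) = \tr(B)$, this is exactly the claimed chain $\lmin(A)\, \tr(B) \leq \tr(AB) \leq \lmax(A)\, \tr(B)$.

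\textbf{Main obstacle.} There is essentially no obstacle here; the only point worth a moment's care is that the diagonalizing transform must be \emph{orthogonal}, so that it simultaneously preserves the positive semi-definite cone (giving $C_{ii} \geq 0$) and the trace (giving the normalization $\sum_i C_{ii} = \tr(B)$). An alternative, diagonalization-free route is to observe that $\lmax(A) I_d - A \in \dS_{+}^{\, d}$ and $A - \lmin(A) I_d \in \dS_{+}^{\, d}$, and that for any $M \in \dS_{+}^{\, d}$ one has $\tr(MB) = \tr(M^{1/2} B\, M^{1/2}) \geq 0$ by Lemma \ref{LemProdSDP}; applying this to $M = \lmax(A) I_d - A$ and to $M = A - \lmin(A) I_d$ gives the two inequalities directly. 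The diagonalization argument above is, however, the most transparent.
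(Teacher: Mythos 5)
Your proof is correct. Your main argument proceeds by orthogonal diagonalization of $A$: writing $\tr(AB) = \tr(DC)$ with $C = P^{\, t} B P$, using Lemma \ref{LemProdSDP} to get $C \in \dS_{+}^{\, d}$ and hence $C_{ii} \geq 0$, and then sandwiching each eigenvalue $\lambda_i(A)$ between $\lmin(A)$ and $\lmax(A)$ in the weighted sum $\sum_i \lambda_i(A)\, C_{ii}$. This is a genuinely different route from the paper, which instead observes that $A - \lmin(A) I_d$ and $\lmax(A) I_d - A$ both lie in $\dS_{+}^{\, d}$ and concludes via $\tr(MB) = \tr(B^{1/2} M B^{1/2}) \geq 0$ from Lemma \ref{LemProdSDP} — precisely the ``diagonalization-free'' alternative you sketch in your closing remark. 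The two arguments are equally rigorous; the paper's version is shorter and reuses the same two-line pattern as the surrounding lemmas, while yours makes the mechanism more transparent (the trace inequality is visibly a convex combination statement about the eigenvalues of $A$ weighted by the non-negative diagonal of the conjugated $B$). Either would be acceptable here.
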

\begin{proof}
Since $A - \lmin(A) I_{d} \in \dS_{+}^{\, d}$ and $B \in \dS_{+}^{\, d}$,
\begin{equation*}
\tr((A - \lmin(A) I_{d})\, B) = \tr(B^{1/2}\, (A - \lmin(A) I_{d})\, B^{1/2}) \geq 0
\end{equation*}
from Lemma \ref{LemProdSDP}, thus $\tr(A B) \geq \lmin(A)\, \tr(B)$. The other inequality is obtained through $\lmax(A) I_{d} - A \in \dS_{+}^{\, d}$.
\end{proof}

\begin{lem}
\label{LemProdMinMaxVP}
Let $A \in \dS_{++}^{\, d}$ and $B \in \dS_{+}^{\, d}$. Then,
\begin{equation*}
\lmin(A)\, \lmin(B)\, \leq\, \lmin(A B) \hsp \text{and} \hsp \lmax(A B)\, \leq\, \lmax(A)\, \lmax(B).
\end{equation*}
\end{lem}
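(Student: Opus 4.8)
The plan is to reduce everything to the symmetric positive semi-definite matrix $M = A^{1/2}\, B\, A^{1/2}$, exactly as in the proof of Lemma \ref{LemProdVP}. Since $A \in \dS_{++}^{\, d}$, the square root $A^{1/2}$ exists, is symmetric positive definite and invertible, and the identity $A B = A^{1/2}\, M\, A^{-1/2}$ shows that $AB$ and $M$ are similar; hence $\lmin(AB) = \lmin(M)$ and $\lmax(AB) = \lmax(M)$. Moreover, $M = (A^{1/2})^{\, t}\, B\, A^{1/2}$ with $B \in \dS_{+}^{\, d}$, so Lemma \ref{LemProdSDP} guarantees $M \in \dS_{+}^{\, d}$ and in particular all these eigenvalues are real and non-negative.

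Next I would invoke the Rayleigh quotient characterization of the extreme eigenvalues of the symmetric matrix $M$. For any $v \in \dR^{d}$ with $\Vert v \Vert = 1$, setting $w = A^{1/2}\, v$ gives $\langle v, M v \rangle = \langle w, B w \rangle$ and $\Vert w \Vert^2 = \langle v, A v \rangle$. Since $B \in \dS_{+}^{\, d}$ and $A \in \dS_{++}^{\, d}$, one has $\lmin(B)\, \Vert w \Vert^2 \leq \langle w, B w \rangle \leq \lmax(B)\, \Vert w \Vert^2$ together with $\lmin(A) \leq \Vert w \Vert^2 \leq \lmax(A)$.

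Combining these bounds, $\langle v, M v \rangle \leq \lmax(B)\, \langle v, A v \rangle \leq \lmax(A)\, \lmax(B)$, and taking the maximum over unit vectors yields $\lmax(AB) = \lmax(M) \leq \lmax(A)\, \lmax(B)$. Symmetrically, $\langle v, M v \rangle \geq \lmin(B)\, \langle v, A v \rangle \geq \lmin(A)\, \lmin(B)$, and minimizing over unit vectors gives $\lmin(AB) = \lmin(M) \geq \lmin(A)\, \lmin(B)$.

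I do not expect any genuine obstacle here; the only points that deserve a moment of care are that $A^{1/2}$ is invertible (so that the similarity $AB \sim M$ is legitimate and transfers the spectrum) and that in the lower bound one multiplies the inequality $\langle v, A v \rangle \geq \lmin(A)$ by the non-negative scalar $\lmin(B) \geq 0$ — both are immediate consequences of $A \in \dS_{++}^{\, d}$ and $B \in \dS_{+}^{\, d}$.
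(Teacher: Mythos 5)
Your proof is correct, but it takes a genuinely different route from the paper's. The paper establishes the upper bound by submultiplicativity of the spectral norm, $\lmax(AB) \leq \Vert A \Vert_2\, \Vert B \Vert_2 = \lmax(A)\, \lmax(B)$ (using Lemma \ref{LemProdVP} to identify norms with largest eigenvalues), and then obtains the lower bound by applying this upper bound to $(AB)^{-1} = B^{-1} A^{-1}$, which forces a separate case distinction when $B$ is singular. You instead pass once and for all to the similar symmetric matrix $M = A^{1/2} B A^{1/2}$ and read off both extreme eigenvalues from the Rayleigh quotient, bounding $\langle v, Mv \rangle = \langle w, Bw \rangle$ with $w = A^{1/2} v$ through $\lmin(B)\,\langle v, Av\rangle \leq \langle w, Bw\rangle \leq \lmax(B)\,\langle v, Av\rangle$. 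Your argument is somewhat cleaner: it treats the two bounds symmetrically, needs no invertibility of $B$ and hence no case split, and makes transparent exactly where positive semi-definiteness of $B$ is used (the sign of $\lmin(B)$ and $\lmax(B)$ when multiplying the inequality $\lmin(A) \leq \langle v, Av\rangle \leq \lmax(A)$). The paper's version, in exchange, is shorter on the upper bound and recycles the norm inequality directly. Both are complete and rigorous.
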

\begin{proof}
On the one hand, $\lmax(A B) \leq \Vert A B \Vert_2 \leq \Vert A \Vert_2\, \Vert B \Vert_2 = \lmax(A)\, \lmax(B)$, since $A$ and $B$ are symmetric and since, from Lemma \ref{LemProdVP} and by hypothesis, all eigenvalues appearing in the relation are non-negative. Suppose now that $B$ is invertible so that both $A^{-1}$ and $B^{-1}$ belong to $\dS_{++}^{\, d}$. Then, $\lmax( (A B)^{-1} ) \leq \lmax(A^{-1})\, \lmax(B^{-1})$ and this immediately gives $\lmin(A B) \geq \lmin(A)\, \lmin(B)$. If $B$ is not invertible, the relation trivially holds since we still have $\lmin(A B) \geq 0$ from Lemma \ref{LemProdVP}.
\end{proof}

\begin{lem}
\label{LemProdTrace}
Let $A \in \dS_{+}^{\, d}$ and $U \in \dR^{d \times \ell}$. Then,
\begin{equation*}
\lmin(A)\, \Vert U \Vert_{F}^2\, \leq\, \tr(U^{\, t} A U)\, \leq\, \lmax(A)\, \Vert U \Vert_{F}^2. 
\end{equation*}
\end{lem}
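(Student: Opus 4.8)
This is a standard sandwich estimate and the cleanest route is to reduce it to Lemma \ref{LemProdEncVP} via the cyclic invariance of the trace. The plan is to rewrite $\tr(U^{\, t} A U)$ as a trace of a product of two symmetric positive semi-definite matrices and then bound the eigenvalue factor by $\lmin(A)$ and $\lmax(A)$.

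\textbf{Key steps.} First I would note that $U U^{\, t} \in \dS_{+}^{\, d}$: indeed, for every $v \in \dR^{d}$ we have $\langle v, U U^{\, t} v \rangle = \Vert U^{\, t} v \Vert^2 \geq 0$, and $U U^{\, t}$ is obviously symmetric. Next, by cyclicity of the trace, $\tr(U^{\, t} A U) = \tr(A\, U U^{\, t})$, so that Lemma \ref{LemProdEncVP} applied to the pair $(A, U U^{\, t}) \in \dS_{+}^{\, d} \times \dS_{+}^{\, d}$ yields
\begin{equation*}
\lmin(A)\, \tr(U U^{\, t})\, \leq\, \tr(A\, U U^{\, t})\, \leq\, \lmax(A)\, \tr(U U^{\, t}).
\end{equation*}
Finally, again by cyclicity, $\tr(U U^{\, t}) = \tr(U^{\, t} U) = \Vert U \Vert_{F}^2$, which substituted into the previous display gives exactly the claimed inequalities. (Alternatively, one could diagonalize $A = P D P^{\, t}$ with $P$ orthogonal and $D = \diag(\textnormal{sp}(A))$, set $W = P^{\, t} U$, and observe $\tr(U^{\, t} A U) = \tr(W^{\, t} D W) = \sum_{i,j} \lambda_{i}(A)\, W_{ij}^2$, bounding each $\lambda_i(A)$ between $\lmin(A)$ and $\lmax(A)$ and using $\sum_{i,j} W_{ij}^2 = \Vert W \Vert_{F}^2 = \Vert U \Vert_{F}^2$ by orthogonal invariance of the Frobenius norm.)

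\textbf{Main difficulty.} There is essentially no obstacle here; the statement is routine. The only points requiring a word of justification are the positive semi-definiteness of $U U^{\, t}$ and the identity $\tr(U U^{\, t}) = \Vert U \Vert_{F}^2$, both of which are immediate, so the proof is short.
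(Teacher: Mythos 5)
Your proof is correct, but it follows a different route from the paper's. The paper argues directly column by column: writing $u_{i}$ for the $i$-th column of $U$, it observes that the $i$-th diagonal entry of $U^{\, t} A U$ is the Rayleigh-type quantity $u_{i}^{\, t} A\, u_{i}$, bounds it between $\lmin(A)\, \Vert u_{i} \Vert^2$ and $\lmax(A)\, \Vert u_{i} \Vert^2$, and sums over $i$. You instead reduce the statement to Lemma \ref{LemProdEncVP} by the cyclic invariance of the trace, applying it to the pair $(A, U U^{\, t})$ and using $\tr(U U^{\, t}) = \Vert U \Vert_{F}^2$; this is perfectly valid since $U U^{\, t} \in \dS_{+}^{\, d}$, and your parenthetical diagonalization argument is a third correct variant. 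Your primary route is more modular — it reuses the already-established sandwich inequality and needs no new computation — while the paper's is self-contained and makes visible the individual diagonal entries of $U^{\, t} A U$ (a fact it reuses implicitly elsewhere); both are equally elementary and there is no gap in either.
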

\begin{proof}
Denote by $u_{i}$ the $i$-th column of $U$. It is not hard to see that the $i$-th diagonal element of $U^{\, t} A U$ is $u_{i}^{\, t} A\, u_{i} \geq \lmin(A)\, \Vert u_{i} \Vert^2 \geq 0$. Thus,
\begin{equation*}
\tr(U^{\, t} A U) = \sum_{i=1}^{\ell} u_{i}^{\, t} A\, u_{i} \geq \lmin(A) \sum_{i=1}^{\ell} \Vert u_{i} \Vert^2 = \lmin(A)\, \Vert U \Vert_{F}^2.
\end{equation*}
The upper bound stems from $0 \leq u_{i}^{\, t} A\, u_{i} \leq \lmax(A)\, \Vert u_{i} \Vert^2$.
\end{proof}

\begin{lem}
\label{LemWeyl}
Let $A$ and $B$ be symmetric matrices of same dimensions. Then,
\begin{equation*}
\lmin(A) + \lmin(B)\, \leq\, \lmin(A+B) \hsp \text{and} \hsp \lmax(A+B)\, \leq\, \lmax(A) + \lmax(B).
\end{equation*}
\end{lem}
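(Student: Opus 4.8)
The plan is to prove this extreme-index case of Weyl's inequality via the Rayleigh--Ritz variational characterization of the largest and smallest eigenvalues of a symmetric matrix. First I would recall that, by the spectral theorem, any symmetric matrix $M$ of size $d$ writes $M = P D P^{\, t}$ with $P$ orthogonal and $D = \diag(\textnormal{sp}(M))$, exactly as in the proof of Lemma \ref{LemProdSDP}. Setting $y = P^{\, t} x$, one has $\langle x, M x \rangle = \langle y, D y \rangle = \sum_{i=1}^{d} \lambda_{i}(M)\, y_{i}^{\, 2}$ while $\Vert x \Vert = \Vert y \Vert$ since $P$ is orthogonal. Restricting to $\Vert x \Vert = 1$ and using $\lmin(M) \leq \lambda_{i}(M) \leq \lmax(M)$ for all $i$, with equality reached at the corresponding unit eigenvector, this gives
\[
\lmax(M) = \max_{\Vert x \Vert = 1} \langle x, M x \rangle \hsp \text{and} \hsp \lmin(M) = \min_{\Vert x \Vert = 1} \langle x, M x \rangle.
\]

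Next I would apply this to $M = A + B$, which is symmetric. For any unit vector $x$, the identity $\langle x, (A+B)\, x \rangle = \langle x, A x \rangle + \langle x, B x \rangle$ combined with the characterization above yields $\langle x, (A+B)\, x \rangle \leq \lmax(A) + \lmax(B)$; taking the maximum over $\Vert x \Vert = 1$ gives $\lmax(A+B) \leq \lmax(A) + \lmax(B)$. The companion inequality for $\lmin$ follows in the same manner by replacing every maximum with a minimum, or more quickly by observing that $\lmin(M) = -\lmax(-M)$ for symmetric $M$ and applying the $\lmax$ bound to $-A$ and $-B$.

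I do not expect a genuine obstacle: the statement is the boundary-index case of Weyl's inequality and the only tool needed is the spectral theorem, already invoked in this section. The single point deserving a word of care is that the extremum of the quadratic form $x \mapsto \langle x, M x \rangle$ over the unit sphere is indeed attained and equals the extreme eigenvalue, which is transparent from the diagonalization written above; everything else is a one-line manipulation.
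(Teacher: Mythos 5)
Your argument is correct. Note, however, that the paper does not actually prove this lemma: it simply observes that both inequalities are special cases of the Weyl inequalities and cites Thm.~4.3.1 of Horn and Johnson. You instead give a short self-contained proof via the Rayleigh--Ritz variational characterization $\lmax(M) = \max_{\Vert x \Vert = 1} \langle x, M x \rangle$ and $\lmin(M) = \min_{\Vert x \Vert = 1} \langle x, M x \rangle$, derived from the spectral theorem, followed by the additivity of the quadratic form $\langle x, (A+B)x \rangle = \langle x, Ax \rangle + \langle x, Bx \rangle$ and a max/min over the unit sphere. This is the standard elementary route to exactly these two extreme-index cases (it does not yield the general Weyl inequalities for interior eigenvalues, but none are needed here), and it buys self-containedness for the price of a few lines; the paper's citation buys brevity and access to the full family of inequalities. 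Every step you write checks out, including the observation that the extremum of the quadratic form on the unit sphere is attained at an eigenvector, and the shortcut $\lmin(M) = -\lmax(-M)$ for the lower bound.
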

\begin{proof}
These are just two special cases of Weyl inequalities. We refer the reader to Thm. 4.3.1 of \cite{HornJohnson92}, for example.
\end{proof}

\subsection{Convexity of the objective}
\label{SecProConv}

We know from Prop. 1 of \cite{YuanZhang14} and the convexity of the elementwise $\ell_1$ norm that $L_{n}(\oyy, \oyx) - \eta\, \llangle L, \oyxt\, \oyyi\, \oyx \rrangle^{\beta}$ is itself convex, but it remains to show that this is still the case with the additional smooth penalty.

\subsubsection*{Proof of Proposition \ref{PropConv}}
Recall that $\Theta = \dS_{++}^{\, q} \times \dR^{q \times p}$ and consider the mapping $\Phi : \Theta \rightarrow \dS_{+}^{\, p}$ defined as
\begin{equation*}
\forall\, (A,B) \in \Theta, \hsp \Phi(A, B) = B^{\, t} A^{-1} B.
\end{equation*}
We can already note from Lemma \ref{LemProdSDP} that $\tr(\Phi(A, B)) \geq 0$. Moreover, for all $0 \leq h \leq 1$ and all $Z_{i} = (A_{i},B_{i}) \in \Theta$, $i=1,2$, it is easy to see that
\begin{equation}
\label{ConvSchur}
S_{h}(Z_1,Z_2) = h\, \Phi(Z_1) + (1-h)\, \Phi(Z_2) - \Phi(h Z_1 + (1-h) Z_2)
\end{equation}
is the Schur complement of $h A_1 + (1-h) A_2$ in the matrix
\begin{equation}
\label{ConvM}
M_{h}(Z_1, Z_2) = h \begin{pmatrix}
A_1 & B_1 \\
B_1^{\, t} & B_1^{\, t}\, A_1^{-1} B_1
\end{pmatrix} + (1-h) \begin{pmatrix}
A_2 & B_2 \\
B_2^{\, t} & B_2^{\, t}\, A_2^{-1} B_2
\end{pmatrix}.
\end{equation}
But the decomposition
\begin{equation*}
\begin{pmatrix}
A^{1/2} & A^{-1/2}\, B \\
0 & 0
\end{pmatrix}^{\! t} \begin{pmatrix}
A^{1/2} & A^{-1/2}\, B \\
0 & 0
\end{pmatrix} = \begin{pmatrix}
A & B \\
B^{\, t} & B^{\, t}\, A^{-1} B
\end{pmatrix}
\end{equation*}
directly shows that $M_{h}(Z_1, Z_2)$ in \eqref{ConvM} is symmetric and positive semi-definite. It is well-known (see \textit{e.g.} Appendix A.5.5 of \cite{BoydVandenberghe04}) that in that case, the Schur complement \eqref{ConvSchur} must also be positive semi-definite. Consequently, for $Z_{i} = (\Omega_{i,yy}, \Omega_{i,yx} L^{1/2})$, $i=1,2$, taking the trace of $S_{h}(Z_1,Z_2)$ and considering $\beta \geq 1$,
\begin{eqnarray*}
\llangle L, P_{h}^{\, t}\, Q_{h}^{-1} P_{h} \rrangle^{\beta} & = & (\tr(\Phi(h Z_1 + (1-h) Z_2)))^{\beta} \\
 & \leq & (h\, \tr(\Phi(Z_1)) + (1-h)\, \tr(\Phi(Z_2)))^{\beta} \\
 & = & (h\, \llangle L, \Omega_{1,yx}^{\, t}\, \Omega_{1,yy}^{-1}\, \Omega_{1,yx} \rrangle + (1-h)\, \llangle L, \Omega_{2,yx}^{\, t}\, \Omega_{2,yy}^{-1}\, \Omega_{2,yx} \rrangle)^{\beta} \\
 & \leq & h\, \llangle L, \Omega_{1,yx}^{\, t}\, \Omega_{1,yy}^{-1}\, \Omega_{1,yx} \rrangle^{\beta} + (1-h)\, \llangle L, \Omega_{2,yx}^{\, t}\, \Omega_{2,yy}^{-1}\, \Omega_{2,yx} \rrangle^{\beta}
\end{eqnarray*}
where $P_{h} = h\, \Omega_{1,yx} + (1-h)\, \Omega_{2,yx}$ and $Q_{h} = h\, \Omega_{1,yy} + (1-h)\, \Omega_{2,yy}$. This convexity inequality concludes the proof. \hfill \qedsymbol

\subsection{Theoretical guarantees}
\label{SecProThm}

\subsubsection*{Proof of Theorem \ref{ThmUppBound}}
Let $R_{n}(\theta)$ be the the smooth part of the objective \eqref{LikHPGGM},
\begin{eqnarray}
\label{SmoothRn}
R_{n}(\theta) & = & -\ln \det(\oyy) + \llangle \vary, \oyy \rrangle + 2\, \llangle \covyx, \oyx \rrangle \nonumber \\
& & \hsp \hsp \hsp \hsp +~ \llangle \varx, \oyxt\, \oyyi\, \oyx \rrangle + \eta\, \llangle L, \oyx^{\, t}\, \oyyi\, \oyx \rrangle^{\beta}.
\end{eqnarray}
For any $\theta \in \Theta$ and $t \in \dR$, by a Taylor expansion,
\begin{equation}
\label{TaylorRn}
R_{n}(\ts + t\, (\theta-\ts)) = R_{n}(\ts) + t\, \llangle \nabla R_{n}(\ts), \theta-\ts \rrangle + e_{t}(\theta, \ts)
\end{equation}
for some second-order error term $e_{t}(\theta, \ts)$. Consider the reparametrization
\begin{equation}
\label{DefPhi}
\phi(t) = R_{n}(\ts + t\, (\theta-\ts))
\end{equation}
so that $\phi^{\prime}(0) = \llangle \nabla R_{n}(\ts), \theta-\ts \rrangle$. Let $\dyy = \oyy - \oyys$ and $\dyx = \oyx - \oyxs$, let also $\dt = \theta - \ts$ in a compact form. The estimation error is denoted
\begin{equation}
\label{EstErr}
\dvt = \wht - \ts = (\whoyy - \oyys, \whoyx - \oyxs) = (\dvyy, \dvyx).
\end{equation}
Before we start the actual proof, some additional lemmas are needed. They constitute a local study in a sort of $r^{*}$-neighborhood of $\ts$ that we define as
\begin{equation}
\label{DefNR}
N_{r, \alpha}(\ts) = \big\{ \theta \in \Theta,\, \Vert \dt \Vert_{F} \leq r^{*} \text{ and } \vert [\dt]_{\bar{S}} \vert_1 \leq \alpha \vert [\dt]_{S} \vert_1 \big\}.
\end{equation}
Our strategy can be summarized as follows:
\begin{itemize}[label=$\rightarrow$]
\item (Lemma \ref{LemInegSmSp}) Show that there exists a configuration for the regularization parameters $(\lambda, \mu, \eta)$ so that the estimation error satisfies $\vert [\dvt]_{\bar{S}} \vert_1 \leq \alpha \vert [\dvt]_{S} \vert_1$ for some $\alpha > 0$.
\item (Lemma \ref{LemMajErr}) Find some $r^{*} > 0$ and $\gamma_{r,\eta,\beta,p} > 0$ such that $e_1(\theta, \ts) > \gamma_{r,\eta,\beta,p} \Vert \dt \Vert_{F}^2$ as soon as $\theta \in N_{r, \alpha}(\ts)$.
\item (Lemma \ref{LemBorneErr}) Exploit this result to show that the estimation error must also satisfy $\Vert \dvt \Vert_{F} \leq r^{*}$ provided that $\max\{h_{a}, h_{b} \}$ is small enough.
\item (Lemma \ref{LemBorneFinale}) Conclude that the theorem holds with high probability, provided that $n$ is large enough.
\end{itemize}
For the sake of readability, we refer the reader to the Appendix for the numerous constants that are about to appear in the following lemmas and proofs. Thereafter, $N_{r, \alpha}(\ts)$ will always refer to $\alpha$ in \eqref{CstAlpha} and $r^{*}$ in \eqref{CstR}, while the second hypothesis \eqref{H2} given below is to be assumed with the smallest integer greater than $s_{\alpha}$ in \eqref{CstSa}. This is a random hypothesis, which will be controlled with a probability, related to the proximity between the empirical covariance and the true covariance of the predictors, since we recall that $S^{\, (n)}$ has no reason to be an excellent approximation of $\Sigma^{*}$ when $p \gg n$. This is also assumed by the authors of \cite{YuanZhang14}, it is a kind of restricted isometry propertie (RIP), well-known in high-dimensional studies. In particular, we will see through Lemma \ref{LemBorneFinale} that it is satisfied with high probability provided that $n$ is large enough.
\begin{equation}
\label{H2}\tag{H$_2$}
\forall\, u \neq 0~ \text{such that}~ \vert u \vert_0 \leq \lceil s_{\alpha} \rceil, \hsp \frac{1}{2}\, u^{\, t}\, \sxxs\, u\, \leq\, u^{\, t}\, \varx\, u\, \leq\, \frac{3}{2}\, u^{\, t}\, \sxxs\, u.
\end{equation}
\begin{equation*}
\text{In addition,} \hsp \lmax(\oyxs\, \varx\, \oyxts)\, \leq\, \frac{7}{5}\, \lmax(\oyxs\, \sxxs\, \oyxts).
\end{equation*}
The next two lemmas give some bounds for expressions that will appear repeatedly.

\begin{lem}
\label{LemBornesVPS}
Under \eqref{H1} and \eqref{H2}, for all $\theta \in N_{r, \alpha}(\ts)$, we have the bound
\begin{equation*}
\lmax(\oyyi\, \oyx\, \varx\, \oyxt)\, \leq\, \omss
\end{equation*}
where $\omss$ is given in \eqref{BornesVP}. In addition,
\begin{equation*}
\tr(\dyx\, \varx\, \dyxt)\, \geq\, \frac{\lmin(\sxxs)}{10}\, \Vert \dyx \Vert_{F}^2.
\end{equation*}
\end{lem}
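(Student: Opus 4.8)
The plan is to prove the two assertions of Lemma~\ref{LemBornesVPS} separately, exploiting the restricted isometry property~\eqref{H2} together with the structural assumptions~\eqref{H1}, and the fact that on the neighbourhood $N_{r, \alpha}(\ts)$ the perturbation $\dt = \theta - \ts$ is both small in Frobenius norm and concentrated (in the $\ell_1$ sense) on the active set $S$.

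For the first bound, write $\oyyi\, \oyx\, \varx\, \oyxt = (\oyyi\, \oyx)\, \varx\, (\oyyi\, \oyx)^{t}\, \oyy$ up to a similarity, or more directly use Lemma~\ref{LemProdMinMaxVP} to get $\lmax(\oyyi\, \oyx\, \varx\, \oyxt) \leq \lmax(\oyyi)\, \lmax(\oyx\, \varx\, \oyxt)$ after a symmetrisation step: since $\oyx\, \varx\, \oyxt \in \dS_{+}^{\, q}$ (Lemma~\ref{LemProdSDP}, as $\varx \in \dS_{+}^{\, p}$) and $\oyyi \in \dS_{++}^{\, q}$, Lemma~\ref{LemProdMinMaxVP} applies. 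Then I would control $\lmax(\oyx\, \varx\, \oyxt)$ by splitting $\oyx = \oyxs + \dyx$, so that
\begin{equation*}
\lmax(\oyx\, \varx\, \oyxt)\, \leq\, \big(\sqrt{\lmax(\oyxs\, \varx\, \oyxts)} + \sqrt{\lmax(\dyx\, \varx\, \dyxt)}\big)^2
\end{equation*}
via the triangle inequality for the seminorm $M \mapsto \sqrt{\lmax(M\, \varx\, M^{t})} = \Vert \varx^{1/2} M^{t} \Vert_2$. The first term is $\leq \frac{7}{5}\lmax(\oyxs\, \sxxs\, \oyxts)$ by the second part of~\eqref{H2}; the second term is bounded using~\eqref{H2} applied coordinatewise to the columns of $\dyx^{t}$ — but here one must be careful because $\dyx$ need not have rows with at most $\lceil s_{\alpha}\rceil$ nonzero entries. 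This is where the defining inequality $\vert [\dt]_{\bar S}\vert_1 \leq \alpha \vert[\dt]_S\vert_1$ of $N_{r,\alpha}(\ts)$ enters: a standard argument (decompose $[\dt]_{\bar S}$ into blocks of its $\lceil s_\alpha \rceil$ largest remaining coordinates, $\grave{a}$ la Cand\`es–Tao) upgrades the sparse RIP~\eqref{H2} to a bound on $\Vert \varx^{1/2} \dyx^{t}\Vert_2$ in terms of $\Vert \dyx \Vert_F \leq \Vert \dt \Vert_F \leq r^{*}$ and $\lmax(\sxxs)$. Collecting the pieces and absorbing the $r^{*}$-dependent remainder into the definition of $\omss$ in~\eqref{BornesVP} finishes this part.

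For the second bound, $\tr(\dyx\, \varx\, \dyxt) = \Vert \varx^{1/2}\dyx^{t}\Vert_F^2 \geq \frac{1}{2}\tr(\dyx\, \sxxs\, \dyxt)$ would follow immediately from~\eqref{H2} \emph{if} each column of $\dyx^{t}$ were sparse; again it is not, so I would run the same cone-type / block-decomposition argument to show that the lower RIP constant $\tfrac12$ degrades only to $\tfrac15$ over the cone $\vert[\dt]_{\bar S}\vert_1 \leq \alpha\vert[\dt]_S\vert_1$ (this is precisely why $\alpha$ is chosen as in~\eqref{CstAlpha} and $s_\alpha$ as in~\eqref{CstSa}), and then Lemma~\ref{LemProdTrace} gives $\tr(\dyx\, \sxxs\, \dyxt) \geq \lmin(\sxxs)\, \Vert \dyx \Vert_F^2$, yielding the claimed constant $\lmin(\sxxs)/10$.

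I expect the main obstacle to be the upgrade from the coordinatewise/sparse form of~\eqref{H2} to a statement valid for the full matrix $\dyx$ — i.e.\ correctly carrying out the block-decomposition of the off-support part of the perturbation and tracking how the RIP constants $\tfrac12, \tfrac32$ deteriorate on the cone, since this is what pins down the otherwise mysterious numerical constants $\tfrac{7}{5}$, $\tfrac{1}{10}$ and the choices of $s_\alpha$ and $\alpha$ in the Appendix. The rest is bookkeeping with the linear-algebra lemmas of Section~\ref{SecProLinAlg}.
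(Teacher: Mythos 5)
Your plan follows the same route as the paper, which for this lemma gives no self-contained argument and simply points to the proofs of Lem.~1--2 of \cite{YuanZhang14} with reworked constants; the substance you would have to supply is therefore exactly the cone/block-decomposition step you defer to the end, and until it is written out the constants $\tfrac{7}{5}$, $2$ and $\tfrac{1}{10}$ are asserted rather than proved. The skeleton is right (Lemma~\ref{LemProdMinMaxVP}, $\lmin(\oyy)\geq\lmin(\oyys)/2$ on $N_{r,\alpha}(\ts)$ via $r_1^{*}$, the seminorm triangle inequality for $M\mapsto\Vert \varx^{1/2}M^{t}\Vert_2$, the second part of \eqref{H2} for the $\oyxs$ term), but two points need fixing. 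First, nothing is ``absorbed into the definition of $\omss$'': $\omss$ is fixed in \eqref{BornesVP}, and it is $r_2^{*}$ in \eqref{CstR} that is calibrated so that $\sqrt{7/5}\,\sqrt{\lmax(\oyxs\sxxs\oyxts)}$ plus the perturbation term stays below $\sqrt{2\,\lmax(\oyxs\sxxs\oyxts)}$; you must verify that your cone bound on $\lmax(\dyx\,\varx\,\dyxt)$ produces precisely the prefactor $\tfrac{3\sqrt{3}}{2\sqrt{2}}\sqrt{\lmax(\sxxs)}$ sitting in the denominator of $r_2^{*}$. Second, the cone condition in \eqref{DefNR} is a single inequality on the whole matrix $\dt=(\dyy,\dyx)$, not one per row of $\dyx$, so the block decomposition must be run with the global budget $\vert[\dt]_{\bar S}\vert_1\leq\alpha\vert[\dt]_S\vert_1\leq\alpha\sqrt{\vert S\vert}\,\Vert\dt\Vert_F$ (this is how $s_{\alpha}$ in \eqref{CstSa} arises), and your arithmetic for the lower bound is inconsistent as stated (a degradation of the lower RIP constant to $\tfrac15$ would yield $\lmin(\sxxs)/5$, not $\lmin(\sxxs)/10$). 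These are exactly the details carried out in Lem.~1--2 of \cite{YuanZhang14}, so the approach itself is not in question, but the proposal as written does not yet establish the lemma.
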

\begin{proof}
Similar reasonings may be found in the proofs of Lem. 1-2 of \cite{YuanZhang14}. We simply reworked the constants to make them stick to our study.
\end{proof}

\begin{lem}
\label{LemBornesVPL}
Under \eqref{H1}, for all $\theta \in N_{r, \alpha}(\ts)$, we have the bounds
\begin{equation*}
\lmin(\oyyi\, \oyx\, L\, \oyxt)\, \geq\, \omli \hsp \text{and} \hsp \lmax(\oyyi\, \oyx\, L\, \oyxt)\, \leq\, \omls
\end{equation*}
where $\omli$ and $\omls$ are given in \eqref{BornesVP}. As a corollary,
\begin{equation*}
p\, \omli\, \leq\, \llangle L, \oyxt\, \oyyi\, \oyx \rrangle\, \leq\, p\, \omls.
\end{equation*}
\end{lem}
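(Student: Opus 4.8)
The plan is to factor $\oyyi\,\oyx\,L\,\oyxt$, treat the two parts separately, and then perturb everything around $\ts$ using the constraint $\Vert\dt\Vert_{F}\leq r^{*}$ that is built into $N_{r,\alpha}(\ts)$. First, since $L\in\dS_{++}^{\,p}$, Lemma \ref{LemProdSDP} applied with $U=\oyxt$ shows that $\oyx\,L\,\oyxt=(\oyxt)^{t}L\,(\oyxt)\in\dS_{+}^{\,q}$, while $\oyyi\in\dS_{++}^{\,q}$; hence Lemma \ref{LemProdMinMaxVP} gives at once
\begin{equation*}
\lmin(\oyyi\,\oyx\,L\,\oyxt)\,\geq\,\lmin(\oyyi)\,\lmin(\oyx\,L\,\oyxt) \hsp\text{and}\hsp \lmax(\oyyi\,\oyx\,L\,\oyxt)\,\leq\,\lmax(\oyyi)\,\lmax(\oyx\,L\,\oyxt).
\end{equation*}
So it is enough to bound $\lmax(\oyyi)=1/\lmin(\oyy)$, $\lmin(\oyyi)=1/\lmax(\oyy)$, and the two extreme eigenvalues of $\oyx\,L\,\oyxt$, uniformly over $\theta\in N_{r,\alpha}(\ts)$.

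For the $\oyy$-factor I would use that $\Vert\dyy\Vert_{2}\leq\Vert\dyy\Vert_{F}\leq\Vert\dt\Vert_{F}\leq r^{*}$, so that Lemma \ref{LemWeyl} yields $\lmin(\oyy)\geq\lmin(\oyys)-r^{*}$ and $\lmax(\oyy)\leq\lmax(\oyys)+r^{*}$, the former being strictly positive once $r^{*}$ is chosen small enough (as it is in the Appendix). For $\lmax(\oyx\,L\,\oyxt)$, submultiplicativity of the spectral norm together with $\Vert\oyx\Vert_{2}\leq\Vert\oyxs\Vert_{2}+\Vert\dyx\Vert_{2}\leq\Vert\oyxs\Vert_{2}+r^{*}$ gives $\lmax(\oyx\,L\,\oyxt)\leq\Vert\oyx\Vert_{2}^{2}\,\lmax(L)\leq(\Vert\oyxs\Vert_{2}+r^{*})^{2}\,\lmax(L)$.

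The delicate step, and the main obstacle, is the lower bound $\lmin(\oyx\,L\,\oyxt)\geq\omli$ with $\omli>0$: this is precisely where the hypothesis $\oyxs\,L\,\oyxts\in\dS_{++}^{\,q}$ from \eqref{H1} is indispensable, since $\oyx\,L\,\oyxt$ is only positive semi-definite in general. I would write $\oyx\,L\,\oyxt=\oyxs\,L\,\oyxts+\Delta$ with the symmetric perturbation $\Delta=\dyx\,L\,\oyxts+\oyxs\,L\,\dyxt+\dyx\,L\,\dyxt$, bound $\Vert\Delta\Vert_{2}\leq\lmax(L)\,r^{*}\,(2\Vert\oyxs\Vert_{2}+r^{*})$ via the triangle inequality and submultiplicativity, and invoke Lemma \ref{LemWeyl} to obtain $\lmin(\oyx\,L\,\oyxt)\geq\lmin(\oyxs\,L\,\oyxts)-\lmax(L)\,r^{*}\,(2\Vert\oyxs\Vert_{2}+r^{*})$. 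The actual work is then to check that the value of $r^{*}$ fixed in the Appendix is small enough, both here and in the $\oyy$-factor, for the two lower bounds to stay strictly positive; combining the three estimates produces the constants $\omli$ and $\omls$ of \eqref{BornesVP}.

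Finally, the corollary follows by passing to the trace: by cyclicity and symmetry of $L$ one has $\llangle L,\oyxt\,\oyyi\,\oyx\rrangle=\tr(\oyyi\,\oyx\,L\,\oyxt)$, and this trace is a sum of eigenvalues of $\oyyi\,\oyx\,L\,\oyxt$, each of which is real, nonnegative and controlled above by $\omls$ and below by $\omli$ in view of what precedes, whence the announced two-sided bound with the dimensional prefactor recorded in \eqref{BornesVP}.
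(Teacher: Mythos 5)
Your skeleton is the paper's: factor through Lemma \ref{LemProdMinMaxVP}, control the $\oyy$-factor by Weyl (Lemma \ref{LemWeyl}) with $\Vert\dyy\Vert_F\leq r^{*}$, perturb $\oyx\,L\,\oyxt$ around $\oyxs\,L\,\oyxts$, and pass to the trace for the corollary. The gap is in the two perturbation estimates for $\oyx\,L\,\oyxt$: they are valid inequalities, but too lossy to produce the specific constants $\omli$ and $\omls$ of \eqref{BornesVP} with the $r^{*}$ actually fixed in \eqref{CstR} — and the verification you defer to ``the actual work'' is precisely where the argument breaks. For the lower bound, your estimate $\Vert\Delta\Vert_2\leq\lmax(L)\,r^{*}(2\Vert\oyxs\Vert_2+r^{*})$ need not be $\leq\frac{1}{2}\lmin(\oyxs L\oyxts)$ when $r^{*}=r_3^{*}=\lmin(\oyxs L\oyxts)/(4\Vert L\oyxts\Vert_2)$: since $\Vert L\oyxts\Vert_2\leq\lmax(L)\Vert\oyxs\Vert_2$, your bound is always at least $\frac{1}{2}\lmin(\oyxs L\oyxts)\cdot\lmax(L)\Vert\oyxs\Vert_2/\Vert L\oyxts\Vert_2$, which can exceed $\lmin(\oyxs L\oyxts)$ itself (take $L=\diag(1,100)$, $\oyxs=(1,0)$), so positivity is lost. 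The paper avoids this by discarding the positive semi-definite term $\dyx L\dyxt$ via Weyl (it can only raise $\lmin$) and bounding only the cross term by $2\Vert\dyx\Vert_F\Vert L\oyxts\Vert_2$ — the quantity $r_3^{*}$ is calibrated against. For the upper bound, $\lmax(\oyx L\oyxt)\leq\Vert\oyx\Vert_2^2\,\lmax(L)$ is not comparable to $2\lmax(\oyxs L\oyxts)$: in the same example it is larger by a factor of order $\lmax(L)$. The paper instead applies the triangle inequality at the level of square roots, $\sqrt{\lmax(\oyx L\oyxt)}=\Vert L^{1/2}\oyxt\Vert_2\leq\sqrt{\lmax(\oyxs L\oyxts)}+\sqrt{\tr(\dyx L\dyxt)}\leq\sqrt{\lmax(\oyxs L\oyxts)}+\Vert\dyx\Vert_F\sqrt{\lmax(L)}$ (Lemma \ref{LemProdTrace}), so the perturbation is additive in $\sqrt{\lmax(\oyxs L\oyxts)}$ and is absorbed by $r_4^{*}$. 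You should rework both steps along these lines; the constants $\omli$ and $\omls$ feed into $\gamma_{r,\eta,\beta,p}$ and the main theorem, so obtaining \emph{some} positive bounds is not enough.

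One further caution: in the corollary, the trace of the $q\times q$ matrix $\oyyi\,\oyx\,L\,\oyxt$ is a sum of $q$ eigenvalues, so your (and the paper's) argument yields $q\,\omli\leq\llangle L,\oyxt\,\oyyi\,\oyx\rrangle\leq q\,\omls$; the prefactor $p$ in the statement is only justified for the upper bound (by padding with the $p-q$ zero eigenvalues of the similar $p\times p$ matrix $L\,\oyxt\,\oyyi\,\oyx$), not for the lower one. Be explicit about which dimension you are counting rather than appealing to ``the dimensional prefactor''.
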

\begin{proof}
From Lemmas \ref{LemProdSDP} and \ref{LemWeyl},
\begin{eqnarray*}
2\, \lmin(\oyx\, L\, \oyxt) & \geq & 2\, \big( \lmin(\oyxs\, L\, \oyxts) + \lmin(\dyx\, L\, \oyxts + \oyxs\, L\, \dyxt) \big) \\
 & \geq & 2\, \big( \lmin(\oyxs\, L\, \oyxts) - \Vert \dyx\, L\, \oyxts + \oyxs\, L\, \dyxt \Vert_{2} \big) \\
 & \geq & 2\, \big( \lmin(\oyxs\, L\, \oyxts) - 2\, \Vert \dyx \Vert_{F}\, \Vert L\, \oyxts \Vert_{2} \big) ~ \geq ~ \lmin(\oyxs\, L\, \oyxts)
\end{eqnarray*}
as soon as $\Vert \dyx \Vert_{F} \leq r^{*}$. From Lemma \ref{LemProdMinMaxVP}, we get
\begin{equation*}
\lmin(\oyyi\, \oyx\, L\, \oyxt) \geq \frac{\lmin(\oyx\, L\, \oyxt)}{\lmax(\oyy)} \geq \frac{\lmin(\oyxs\, L\, \oyxts)}{4\, \lmax(\oyys)}
\end{equation*}
where the inequality in the denominator comes from $\lmax(\oyy) \leq \lmax(\oyys) + \lmax(\dyy)$, \textit{via} Lemma \ref{LemWeyl}, and the fact that $\lmax(\dyy) \leq \Vert \dyy \Vert_{F} \leq r^{*} \leq \lmax(\oyys)$. For the upper bound, a similar logic gives, with Lemma \ref{LemProdTrace},
\begin{eqnarray*}
\sqrt{\lmax(\oyx\, L\, \oyxt)} & \leq & \sqrt{\lmax(\oyxs\, L\, \oyxts)} + \sqrt{\tr(\dyx\, L\, \dyxt)} \\
 & \leq & \sqrt{\lmax(\oyxs\, L\, \oyxts)} + \Vert \dyx \Vert_{F}\, \sqrt{\lmax(L)} ~ \leq ~ \sqrt{2\, \lmax(\oyxs\, L\, \oyxts)}
\end{eqnarray*}
for $\Vert \dyx \Vert_{F} \leq r^{*}$. It follows from Lemma \ref{LemProdMinMaxVP} that
\begin{equation*}
\lmax(\oyyi\, \oyx\, L\, \oyxt) \leq \frac{\lmax(\oyx\, L\, \oyxt)}{\lmin(\oyy)} \leq \frac{4\, \lmax(\oyxs\, L\, \oyxts)}{\lmin(\oyys)}
\end{equation*}
where the inequality in the denominator comes from $\lmin(\oyy) \geq \lmin(\oyys) + \lmin(\dyy)$, \textit{via} Lemma \ref{LemWeyl}, and the fact that $2\, \lmin(\dyy) \geq -2\, \Vert \dyy \Vert_{F} \geq -2\, r^{*} \geq -\lmin(\oyys)$. The corollary that concludes the lemma is now immediate.
\end{proof}

\begin{lem}
\label{LemInegSmSp}
Assume that $\lambda$, $\mu$ and $\eta$ are chosen according to the configuration of the theorem. Then, under \eqref{H1}, the estimation error satisfies
\begin{equation*}
\vert [ \dvt]_{\bar{S}} \vert_1 \leq \alpha\, \vert [ \dvt ]_{S} \vert_1
\end{equation*}
where $\alpha > 0$ is given in \eqref{CstAlpha}.
\end{lem}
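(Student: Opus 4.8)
The plan is to use the fact that $\wht$ is a global minimizer of $\ell_{\lambda,\mu,\eta}$, so that $\ell_{\lambda,\mu,\eta}(\wht) \leq \ell_{\lambda,\mu,\eta}(\ts)$, and to rearrange this inequality into a comparison between the smooth part $R_n$ evaluated at $\wht$ versus $\ts$ on the one hand, and the $\ell_1$ penalties on the other. Writing $R_n$ for the smooth part \eqref{SmoothRn}, the minimality gives
\begin{equation*}
R_n(\wht) - R_n(\ts) \leq \lambda\, \big( \vert \oyys \vert_1^{-} - \vert \whoyy \vert_1^{-} \big) + \mu\, \big( \vert \oyxs \vert_1 - \vert \whoyx \vert_1 \big).
\end{equation*}
The key step is to lower-bound the left-hand side by a first-order term. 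Since $R_n$ is convex (this is essentially Proposition \ref{PropConv}, or rather the convexity of the smooth part alone, which follows from the same Schur-complement argument), we have $R_n(\wht) - R_n(\ts) \geq \llangle \nabla R_n(\ts), \dvt \rrangle$. The gradient $\nabla R_n(\ts)$ splits into a block acting on $\dvyy$ and a block acting on $\dvyx$; a direct computation using \eqref{InvBloc} shows that, at the true parameter, $\nabla R_n(\ts)$ equals (up to the smooth penalty contribution, controlled by $\eta$) precisely the random matrices $A_n$ and $B_n$ of \eqref{MatA}--\eqref{MatB} --- this is where the specific forms of $h_a$ and $h_b$ enter. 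So $\llangle \nabla R_n(\ts), \dvt \rrangle \geq -h_a \vert \dvyy \vert_1 - h_b \vert \dvyx \vert_1 - \eta\, \vert \llangle L, \nabla_{\oyx}(\cdot) \rrangle \vert$, where the $\eta$-term is bounded via Lemma \ref{LemBornesVPL} (through $s_L$, $\ell_a$, $\ell_b$) and absorbed thanks to the constraint $\eta \leq \overline\eta$ defining $\Lambda$.

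Next I would combine the two displays. On the penalty side, the standard decomposition over the active set $S$ and its complement $\bar S$: since $\ts$ is supported on $S$, $\vert \whoyy \vert_1^{-} \geq \vert [\oyys]_S \vert_1^{-} - \vert [\dvyy]_S \vert_1^{-} + \vert [\dvyy]_{\bar S} \vert_1^{-}$ and similarly for $\oyx$, so that $\lambda(\vert \oyys \vert_1^{-} - \vert \whoyy \vert_1^{-}) + \mu(\vert \oyxs \vert_1 - \vert \whoyx \vert_1) \leq c \vert [\dvt]_S \vert_1 - c' \vert [\dvt]_{\bar S} \vert_1$ for appropriate combinations of $\lambda, \mu$. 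Meanwhile the first-order lower bound contributes $- c'' \vert [\dvt]_S \vert_1 - c'' \vert [\dvt]_{\bar S} \vert_1$ with $c''$ built from $h_a, h_b$ and the absorbed $\eta$-term. The hypotheses $\lambda \geq c_\lambda h_a$ and $\mu \geq c_\mu h_b$ with $c_\lambda, c_\mu > 1$, together with the bound $\eta \leq \overline\eta$ which ensures the $\eta$-contribution is dominated by the slack $(c_\lambda - 1)\lambda/c_\lambda$ and $(c_\mu - 1)\mu/c_\mu$, guarantee that the $\vert [\dvt]_{\bar S} \vert_1$ coefficient on the right stays strictly positive while the $\vert [\dvt]_S \vert_1$ coefficient is controlled; collecting terms yields $\vert [\dvt]_{\bar S} \vert_1 \leq \alpha \vert [\dvt]_S \vert_1$ with $\alpha$ depending on $d_\lambda/c_\lambda$, $d_\mu/c_\mu$, $e_\lambda$, $e_\mu$ exactly as in \eqref{CstAlpha}.

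The main obstacle, I expect, is the careful bookkeeping of the $\eta$-penalty term: unlike the $\ell_1$ penalties it is not separable over coordinates, so to push it into the cone inequality one must first linearize $\llangle L, \oyxt \oyyi \oyx \rrangle^\beta$ around $\ts$ (using $\beta \geq 1$ and the bounds of Lemma \ref{LemBornesVPL} on the relevant eigenvalues, which furnish the factor $s_L^{\beta-1}$) and then bound the resulting linear functional of $\dvyx$ by a constant times $\vert \dvyx \vert_1$, the constant being $\ell_b$ (and similarly $\ell_a$ for the $\oyy$-direction). The definition of $\overline\eta$ is reverse-engineered precisely so that this bound is swallowed by the available slack; verifying that no circularity arises --- the linearization is only valid in the $r^*$-neighborhood, but at this stage we do not yet know $\wht \in N_{r,\alpha}(\ts)$ --- requires noting that the cone inequality derived here is exactly what is needed as input to Lemma \ref{LemBorneErr}, and that the first-order/convexity argument used here does not itself require proximity of $\wht$ to $\ts$. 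I would make sure to state the argument so that the neighborhood constraint is used only through the eigenvalue bounds of Lemma \ref{LemBornesVPL}, which hold under \eqref{H1} alone in the relevant form, or to handle the $\eta$-term by a global convexity estimate instead.
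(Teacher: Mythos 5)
Your proposal follows essentially the same route as the paper: global optimality of $\wht$ plus convexity of the smooth part gives $R_n(\wht)-R_n(\ts)\geq\llangle\nabla R_n(\ts),\dvt\rrangle$, the gradient at $\ts$ is identified with $A_n$ and $B_n$ plus the structural contributions, and the bound $\vert\tr(M_1M_2)\vert\leq\vert M_1\vert_\infty\vert M_2\vert_1$ together with the $S/\bar S$ triangle-inequality decomposition and the constraints defining $\Lambda$ yield the cone inequality with $\alpha=\overline{c}/\underline{c}$. The obstacle you flag at the end is in fact a non-issue: since the gradient is evaluated exactly at the fixed point $\ts$, the chain rule applied to $u\mapsto u^{\beta}$ produces the factor $\beta\, s_{L}^{\,\beta-1}$ with $s_{L}=\llangle L,\oyxts\,\oyyis\,\oyxs\rrangle$ exactly, so no linearization in a neighborhood and no appeal to Lemma \ref{LemBornesVPL} is required at this stage.
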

\begin{proof}
Taking $t=1$ in the Taylor expansion \eqref{TaylorRn} with $\theta = \wht$ and considering the definition of $\phi$ in \eqref{DefPhi}, by convexity,
\begin{equation*}
R_{n}(\wht) - R_{n}(\ts) \geq \phi^{\prime}(0).
\end{equation*}
The first derivative of $\phi$ will be explicitely computed in \eqref{Der1Phi}. For $t=0$, we find
\begin{eqnarray*}
\phi^{\prime}(0) & = & -\llangle \oyyis, \dvyy \rrangle  + \llangle \vary, \dvyy \rrangle + 2\, \llangle \covyx, \dvyx \rrangle \nonumber \\
 & & \hsp \hsp \hsp +~ 2\, \llangle \varx, \oyxts\, \oyyis\, \dvyx \rrangle - \llangle \varx, \oyxts\, \oyyis\, \dvyy\, \oyyis\, \oyxs \rrangle \nonumber \\
 & & \hsp \hsp \hsp +~ \eta \beta\, s_{L}^{\, \beta-1} \big[ 2\, \llangle L, \oyxts\, \oyyis\, \dvyx \rrangle - \llangle L, \oyxts\, \oyyis\, \dvyy\, \oyyis\, \oyxs \rrangle \big] \\
 & = & \llangle A_{n} + \eta \beta\, s_{L}^{\, \beta-1} C_{A}, \dvyy \rrangle + \llangle B_{n} + \eta \beta\, s_{L}^{\, \beta-1} C_{B}, \dvyx \rrangle
\end{eqnarray*}
where $s_{L}$ is given in \eqref{CstL}, where, through the blockwise relations \eqref{InvBloc}, we recognize the random matrices $A_{n}$ (with max norm $h_{a}$) and $B_{n}$ (with max norm $h_{b}$) defined in \eqref{MatA} and \eqref{MatB}, and where, coming from the structural regularization term,
\begin{equation*}
C_{A} = -  \oyyis\, \oyxs\, L\, \oyxts\, \oyyis \hsp \text{and} \hsp C_{B} = 2\, \oyyis\, \oyxs\, L.
\end{equation*}
Whence it follows from the well-known relation $\vert \tr(M_1 M_2) \vert \leq \vert M_1 \vert_{\infty}\, \vert M_2 \vert_1$, where $M_1$ and $M_2$ are compatible matrices, that
\begin{equation*}
\phi^{\prime}(0) \geq -\frac{\lambda}{c_{\lambda}}\, \vert \dvyy \vert_1 - \eta \beta\, s_{L}^{\, \beta-1} \ell_{a}\, \vert \dvyy \vert_1 - \frac{\mu}{c_{\mu}}\, \vert \dvyx \vert_1 - \eta \beta\, s_{L}^{\, \beta-1} \ell_{b}\, \vert \dvyx \vert_1
\end{equation*}
making use of the constants \eqref{CstL}, $\lambda \geq c_{\lambda}\, h_{a}$ and $\mu \geq c_{\mu}\, h_{b}$. For the sake of clarity, let
\begin{equation*}
\Delta_{n}(\theta, \ts) = R_{n}(\theta) + \lambda\, \vert \oyy \vert_1^{-} + \mu\, \vert \oyx \vert_1 - R_{n}(\ts) - \lambda\, \vert \oyys \vert_1^{-} - \mu\, \vert \oyxs \vert_1.
\end{equation*}
For all $\theta \in \Theta$,
\begin{eqnarray*}
\vert \oyy \vert_1^{-} - \vert \oyys \vert_1^{-} & = & \vert [\oyys + \dyy]_{S} \vert_1^{-} + \vert [\dyy]_{\bar{S}} \vert_1^{-} - \vert [\oyys]_{S} \vert_1^{-} \\
 & \geq & \big\vert \vert [\oyys]_{S} \vert_1^{-} - \vert [\dyy]_{S} \vert_1^{-} \big\vert + \vert [\dyy]_{\bar{S}} \vert_1^{-} - \vert [\oyys]_{S} \vert_1^{-} \\
 & \geq & \vert [\dyy]_{\bar{S}} \vert_1 - \vert [\dyy]_{S} \vert_1
\end{eqnarray*}
from the triangle inequality and the fact that, as $\oyys$ is positive definite, the diagonal must belong to $S$, \textit{i.e.} $(j,j) \in S$ for all $1 \leq j \leq q$ so that any square matrix $M$ of size $q$ is such that $[M]_{\bar{S}}$ has diagonal elements all equal to zero. A similar bound obviously holds for $\vert \oyx \vert_1 - \vert \oyxs \vert_1$. Now, a straightforward calculation shows that
\begin{equation}
\label{Delta}
\Delta_{n}(\wht, \ts) \geq \underline{c}\, \big( \vert [\dvyy]_{\bar{S}} \vert_1 + \vert [\dvyx]_{\bar{S}} \vert_1 \big) - \overline{c}\, \big( \vert [\dvyy]_{S} \vert_1 + \vert [\dvyx]_{S} \vert_1 \big)
\end{equation}
where
\begin{equation*}
\overline{c} = \max\left\{ \frac{(c_{\lambda}+1) \lambda}{c_{\lambda}} + \eta \beta\, s_{L}^{\, \beta-1} \ell_{a}, \frac{(c_{\mu}+1) \mu}{c_{\mu}} + \eta \beta\, s_{L}^{\, \beta-1} \ell_{b} \right\}
\end{equation*}
and
\begin{equation*}
\underline{c} = \min\left\{ \frac{(c_{\lambda}-1) \lambda}{c_{\lambda}} - \eta \beta\, s_{L}^{\, \beta-1} \ell_{a}, \frac{(c_{\mu}-1) \mu}{c_{\mu}} - \eta \beta\, s_{L}^{\, \beta-1} \ell_{b} \right\}.
\end{equation*}
Thus, provided that $\underline{c} > 0$, which is stated in the configuration of the theorem, it only remains to note that, necessarily,
\begin{equation*}
\Delta_{n}(\wht, \ts) \leq 0
\end{equation*}
since $\wht$ is the global minimizer of $\theta \mapsto R_{n}(\theta) + \lambda\, \vert \oyy \vert_1^{-} + \mu\, \vert \oyx \vert_1$. The identification of $\alpha$ given in \eqref{CstAlpha} easily follows.
\end{proof}

\begin{lem}
\label{LemMajErr}
Under \eqref{H1} and \eqref{H2}, the second-order error term of \eqref{TaylorRn} satisfies, for $t=1$ and all $\theta \in N_{r, \alpha}(\ts)$,
\begin{equation*}
e_1(\theta, \ts) > \gamma_{r,\eta,\beta,p}\, \Vert \dt \Vert_{F}^2
\end{equation*}
where $\gamma_{r,\eta,\beta,p} > 0$ is given in \eqref{CstGam}.
\end{lem}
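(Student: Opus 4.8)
The plan is to identify $e_1(\theta,\ts)$ with the exact second-order Taylor remainder of $R_n$ along the segment $[\ts,\theta]$ and thereby reduce the statement to a uniform local strong convexity bound. With $\phi(t)=R_n(\ts+t\,\dt)$ as in \eqref{DefPhi}, the integral form of Taylor's theorem gives $e_1(\theta,\ts)=\phi(1)-\phi(0)-\phi'(0)=\int_0^1(1-t)\,\phi''(t)\,dt$ with $\phi''(t)=\nabla^2 R_n(\ts+t\,\dt)[\dt,\dt]$. Since $N_{r,\alpha}(\ts)$ in \eqref{DefNR} is the intersection of a Frobenius ball and a cone, both convex and containing $\ts$, the entire segment $\ts+t\,\dt$, $t\in[0,1]$, lies in $N_{r,\alpha}(\ts)$ — indeed $\Vert t\,\dt\Vert_F\leq r^{*}$ and $\vert[t\,\dt]_{\bar S}\vert_1\leq\alpha\,\vert[t\,\dt]_S\vert_1$ — and in particular stays in $\dS_{++}^{\,q}\times\dR^{q\times p}$, where $R_n$ is smooth. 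It therefore suffices to prove $\phi''(t)\geq 2\,\gamma_{r,\eta,\beta,p}\,\Vert\dt\Vert_F^2$ for all $\theta\in N_{r,\alpha}(\ts)$ and all $t\in[0,1]$.

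Next I would split $R_n$ from \eqref{SmoothRn} into three convex blocks — $-\ln\det(\oyy)$ together with the two linear terms, the block $\llangle\varx,\oyxt\oyyi\oyx\rrangle$, and the structural block $\eta\,\llangle L,\oyxt\oyyi\oyx\rrangle^{\beta}$ — so that the Hessians add and the linear part drops. For the log-determinant block, $\nabla^2(-\ln\det\oyy)[\dyy,\dyy]=\tr(\oyyi\dyy\oyyi\dyy)\geq\lmin(\oyyi)^2\,\Vert\dyy\Vert_F^2$, and on $N_{r,\alpha}(\ts)$ Lemma \ref{LemWeyl} together with $\Vert\dyy\Vert_F\leq r^{*}$ gives $\lmax(\oyy)\leq\lmax(\oyys)+r^{*}$, hence a clean lower bound of order $\Vert\dyy\Vert_F^2$. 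For $\llangle\varx,\oyxt\oyyi\oyx\rrangle$, differentiating twice along the segment and completing the square (the infinitesimal version of the Schur-complement argument of Proposition \ref{PropConv}) yields
\begin{equation*}
\nabla^2\llangle\varx,\oyxt\oyyi\oyx\rrangle[\dt,\dt] \;=\; 2\,\tr\!\big(\varx\,\widetilde W^{\,t}\,\oyyi\,\widetilde W\big)\;\geq\;0,\qquad \widetilde W:=\dyx-\dyy\,\oyyi\,\oyx .
\end{equation*}
Expanding the square, its leading term $2\,\tr(\varx\dyxt\oyyi\dyx)\geq 2\,\lmin(\oyyi)\,\tr(\dyx\varx\dyxt)\geq\frac{\lmin(\oyyi)\,\lmin(\sxxs)}{5}\,\Vert\dyx\Vert_F^2$ by the restricted-eigenvalue estimate of Lemma \ref{LemBornesVPS} and is of the required order, while the only indefinite term, the cross term $-4\,\tr(\varx\oyxt\oyyi\dyy\oyyi\dyx)$, is handled by a Cauchy--Schwarz/AM--GM split against the remaining positive quadratic term, which is in turn bounded by a multiple of $\Vert\dyy\Vert_F^2$ using $\lmax(\oyyi\oyx\varx\oyxt)\leq\omss$ from Lemma \ref{LemBornesVPS}; the $\Vert\dyy\Vert_F^2$ debt so incurred is absorbed by the log-determinant contribution, which is why $r^{*}$ from \eqref{CstR} must be small enough to keep $\lmax(\oyy)$ and $\lmax(\oyyi)$ controlled. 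Finally the structural block is convex for $\beta\geq 1$ by Proposition \ref{PropConv}, its Hessian being $\eta\beta\,u^{\beta-1}\nabla^2 u+\eta\beta(\beta-1)\,u^{\beta-2}(\nabla u)(\nabla u)^{t}\succeq 0$ with $u=\llangle L,\oyxt\oyyi\oyx\rrangle$; the bound $u\geq p\,\omli>0$ from Lemma \ref{LemBornesVPL} — this is where \eqref{H1} and the positivity of $\oyxs L\oyxts$ are used — lets one retain the first summand as a nonnegative (and, for $\beta>1$, genuinely positive) contribution, which is what puts $\eta$, $\beta$ and $p$ into $\gamma_{r,\eta,\beta,p}$ as defined in \eqref{CstGam}.

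Summing the three blocks gives $\phi''(t)\geq 2\,\gamma_{r,\eta,\beta,p}\,\Vert\dt\Vert_F^2$ uniformly on $N_{r,\alpha}(\ts)$, hence $e_1(\theta,\ts)\geq\gamma_{r,\eta,\beta,p}\,\Vert\dt\Vert_F^2$, the strict inequality of the statement being obtained by fixing $\gamma_{r,\eta,\beta,p}$ in \eqref{CstGam} slightly below the uniform curvature bound just produced (it is $>0$ because at least one of the log-determinant and the $\dyx$-quadratic contributions is strictly definite once $\dt\neq 0$). The main obstacle is the middle block: because the reparametrisation $(\oyy,\oyx)\mapsto\widetilde W$ is not injective, $\nabla^2\llangle\varx,\oyxt\oyyi\oyx\rrangle$ is only positive \emph{semi}-definite in the $\dyx$-direction, so recovering genuine $\Vert\dyx\Vert_F^2$ control forces the delicate trade of the cross term against the $-\ln\det$ block to be carried out \emph{uniformly over all of} $N_{r,\alpha}(\ts)$, while every constant ($\lmin(\sxxs)$, $\omss$, $\lmax(\oyys)$, $r^{*}$, and the structural factor $\eta\beta(p\,\omli)^{\beta-1}$) is tracked explicitly so that $\gamma_{r,\eta,\beta,p}>0$ survives. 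This local strong convexity estimate is precisely what Lemma \ref{LemBorneErr} and the subsequent arguments will feed on.
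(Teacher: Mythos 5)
Your proposal is correct and follows essentially the same route as the paper: the paper writes $e_1(\theta,\ts)=\tfrac12\phi''(h)$ (Lagrange form rather than your integral form, an immaterial difference since both require the curvature bound uniformly in $t\in[0,1]$ over the segment, which stays in $N_{r,\alpha}(\ts)$ exactly as you argue), computes $\phi''$ explicitly, and handles the indefinite cross term $-4\llangle\varx,\oyxt\oyyi\dyy\oyyi\dyx\rrangle$ by the same AM--GM split, absorbing the resulting $\llangle\oyyi,\dyy\oyyi\dyy\rrangle$ debt (via $\lmax(\oyyi\oyx\varx\oyxt)\leq\omss$) into the log-determinant curvature, and then converting to Frobenius norms with the same eigenvalue bounds. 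Your observation that the middle-block Hessian is the complete square $2\,\tr(\varx\,\widetilde W^{\,t}\oyyi\widetilde W)$ is a nice compact repackaging that the paper does not state, though both proofs then expand it and trade the cross term in the same way. The one place your sketch diverges from what is needed to land exactly on $\gamma_{r,\eta,\beta,p}$ as defined in \eqref{CstGam}: the paper does \emph{not} merely retain the structural block as nonnegative; it applies the identical cross-term split (with a second parameter $d$, i.e.\ $\epsilon_L$) inside $\eta\beta\,u_L^{\beta-1}\nabla^2 u_L$, which is what produces both the positive $a_3\propto\eta\beta(p\,\omli)^{\beta-1}$ contribution in the $\dyx$-direction and the extra penalty $-\eta\beta\,p^{\,\beta-1}\omls^{\,\beta}\epsilon_L$ inside $a_1$. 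Keeping that block only as ``$\geq 0$'' still yields a valid strong-convexity constant (essentially the $\epsilon_L\to 0$ limit of the paper's), but not literally the constant \eqref{CstGam} quoted in the statement, so you would need to carry out that second split to match the lemma as written.
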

\begin{proof}
From the definition of $\phi$ in \eqref{DefPhi} and the fact that $\phi^{\prime}(0) = \llangle \nabla R_{n}(\ts), \theta-\ts \rrangle$, there exists $h \in~ ]0,1[$ satisfying
\begin{equation}
\label{ErrTaylor}
e_1(\theta, \ts) = \frac{1}{2}\, \phi^{\prime\prime}(h).
\end{equation}
To simplify the calculations, let
\begin{equation}
\label{DefUxy}
u_{L} = \llangle L, \oyxt\, \oyyi\, \oyx \rrangle.
\end{equation}
We are going to study the behavior of $R_{n}(\oyy, \oyx)$ in the directions $\oyy = \oyys + t\, \dyy$ and $\oyx = \oyxs + t\, \dyx$ through $\phi(t)$, where we recall that $\dyy = \oyy - \oyys$ and $\dyx = \oyx - \oyxs$. One can see that $\phi(t)$ moves from $R_{n}(\oyy, \oyx)$ to $R_{n}(\oyys, \oyxs)$ as $t$ decreases from 1 to 0. The first derivative is
\begin{eqnarray}
\label{Der1Phi}
\phi^{\prime}(t) & = & -\llangle \oyyi, \dyy \rrangle + \llangle \vary, \dyy \rrangle + 2\, \llangle \covyx, \dyx \rrangle \nonumber \\
 & & \hsp \hsp \hsp +~ 2\, \llangle \varx, \oyxt\, \oyyi\, \dyx \rrangle - \llangle \varx, \oyxt\, \oyyi\, \dyy\, \oyyi\, \oyx \rrangle \nonumber \\
 & & \hsp \hsp \hsp +~ \eta \beta\, u_{L}^{\, \beta-1} \big[ 2\, \llangle L, \oyxt\, \oyyi\, \dyx \rrangle - \llangle L, \oyxt\, \oyyi\, \dyy\, \oyyi\, \oyx \rrangle \big].
\end{eqnarray}
The second derivative is tedious to write but straightforward to establish,
\begin{eqnarray}
\label{Der2Phi}
\phi^{\prime\prime}(t) & = & 
\llangle \oyyi, \dyy\, \oyyi\, \dyy \rrangle + 2\, \big[ \llangle \varx, \dyxt\, \oyyi\, \dyx \rrangle - 2\, \llangle \varx, \oyxt\, \oyyi\, \dyy\, \oyyi\, \dyx \rrangle \nonumber \\
 & & \hsp \hsp \hsp +~ \llangle \varx, \oyxt\, \oyyi\, \dyy\, \oyyi\, \dyy\, \oyyi\, \oyx \rrangle \big] \nonumber \\
 & & \hsp \hsp \hsp +~ 2\, \eta \beta\, u_{L}^{\, \beta-1} \big[ \llangle L, \dyxt\, \oyyi\, \dyx \rrangle - 2\, \llangle L, \oyxt\, \oyyi\, \dyy\, \oyyi\, \dyx \rrangle \nonumber \\
 & & \hsp \hsp \hsp +~ \llangle L, \oyxt\, \oyyi\, \dyy\, \oyyi\, \dyy\, \oyyi\, \oyx \rrangle \big] \nonumber \\
 & & \hsp \hsp \hsp +~ \eta \beta (\beta-1)\, u_{L}^{\, \beta-2} \big[ 2\, \llangle L, \oyxt\, \oyyi\, \dyx \rrangle - \llangle L, \oyxt\, \oyyi\, \dyy\, \oyyi\, \oyx \rrangle \big]^2.
\end{eqnarray}
First, from the combination of Lemmas \ref{LemProdSDP} and \ref{LemBornesVPL}, we clearly have $u_{L} \geq 0$. We also note that $0 \leq \Vert \frac{2}{c} M_1 - c M_2 \Vert^2_{F} = \frac{4}{c^2}\, \Vert M_1 \Vert^2_{F} - 4\, \llangle M_1, M_2 \rrangle + c^2\, \Vert M_2 \Vert^2_{F}$ for any $c \neq 0$ and any matrices $M_1$ and $M_2$ of same dimensions. It follows, after some reorganizations, that for any $c \neq 0$ and $d \neq 0$,
\begin{eqnarray*}
\phi^{\prime\prime}(t) & \geq & \llangle \oyyi, \dyy\, \oyyi\, \dyy \rrangle \\
 & & \hsp \hsp \hsp +~ c_1\, \llangle \oyyi, \dyx\, \varx\, \dyxt \rrangle + c_2\, \llangle \varx, \oyxt\, \oyyi\, \dyy\, \oyyi\, \dyy\, \oyyi\, \oyx \rrangle \\
 & & \hsp \hsp \hsp +~ \eta \beta\, u_{L}^{\, \beta-1} \big[ d_1\, \llangle \oyyi, \dyx\, L\, \dyxt \rrangle + d_2\, \llangle L, \oyxt\, \oyyi\, \dyy\, \oyyi\, \dyy\, \oyyi\, \oyx \rrangle \big]
\end{eqnarray*}
where $c_1 = 2 - \frac{4}{c^2}$, $c_2 = 2 - c^2$, $d_1 = 2 - \frac{4}{d^2}$ and $d_2 = 2 - d^2$. Here we exploited the previous inequality twice, $u_{L} \geq 0$ and $\beta \geq 1$. From Lemmas \ref{LemProdSDP}, \ref{LemProdEncVP}, \ref{LemBornesVPS} and \ref{LemBornesVPL}, using $\textnormal{sp}(M_1\, M_2) = \textnormal{sp}(M_2\, M_1)$ for square matrices $M_1$ and $M_2$, we obtain
\begin{equation*}
\llangle L, \oyxt\, \oyyi\, \dyy\, \oyyi\, \dyy\, \oyyi\, \oyx \rrangle \leq \omls\, \llangle \oyyi, \dyy\, \oyyi\, \dyy \rrangle
\end{equation*}
where $\omls$ is defined in \eqref{BornesVP}. Replacing $L$ by $\varx$ and $\omls$ by $\omss$, a similar bound obviously holds. Suppose that $c$ and $d$ are chosen so that $c_1 > 0$, $d_1 > 0$, $c_2 < 0$ and $d_2 < 0$. Then,
\begin{eqnarray*}
\phi^{\prime\prime}(t) & \geq & \llangle \oyyi, \dyy\, \oyyi\, \dyy \rrangle\, \big[ 1 - \vert c_2 \vert\, \omss - \eta \beta\, u_{L}^{\, \beta-1} \vert d_2 \vert\, \omls \big] \\
 & & \hsp \hsp \hsp +~ c_1\, \llangle \oyyi, \dyx\, \varx\, \dyxt \rrangle + \eta \beta\, u_{L}^{\, \beta-1} d_1\, \llangle \oyyi, \dyx\, L\, \dyxt \rrangle \\
 & \geq & \llangle \oyyi, \dyy\, \oyyi\, \dyy \rrangle\, \big[ 1 - \vert c_2 \vert\, \omss - \eta \beta\, (p\, \omls)^{\beta-1} \vert d_2 \vert\, \omls \big] \\
 & & \hsp \hsp \hsp +~ c_1\, \llangle \oyyi, \dyx\, \varx\, \dyxt \rrangle + \eta \beta\, (p\, \omli)^{\beta-1} d_1\, \llangle \oyyi, \dyx\, L\, \dyxt \rrangle.
\end{eqnarray*}
Now choose $\epsilon_{S} > 0$ and $\epsilon_{L} > 0$ small enough so that $\epsilon_{S}\, \omss + \eta \beta\, p^{\, \beta-1}\, \omls^{\, \beta}\, \epsilon_{L} < 1$ and fix $c = \sqrt{2 + \epsilon_{S}}$ and $d = \sqrt{2 + \epsilon_{L}}$. We finally obtain
\begin{equation}
\label{InegDerSec}
\phi^{\prime\prime}(t) \geq a_1\, \llangle \oyyi, \dyy\, \oyyi\, \dyy \rrangle + a_2\, \llangle \oyyi, \dyx\, \varx\, \dyxt \rrangle + a_3\, \llangle \oyyi, \dyx\, L\, \dyxt \rrangle
\end{equation}
where these positive constants are respectively given by
\begin{equation*}
a_1 = 1 - \epsilon_{S}\, \omss - \eta \beta\, p^{\, \beta-1}\, \omls^{\, \beta}\, \epsilon_{L}, \hsp a_2 = \frac{2\, \epsilon_{S}}{2+\epsilon_{S}} \hsp \text{and} \hsp a_3 = \eta \beta\, (p\, \omli)^{\beta-1} \frac{2\, \epsilon_{L}}{2+\epsilon_{L}}.
\end{equation*}
The combination of Lemmas \ref{LemProdSDP}, \ref{LemProdEncVP} and \ref{LemProdTrace} gives, uniformly in $t \in [0, 1]$,
\begin{equation*}
\llangle \oyyi, \dyy\, \oyyi\, \dyy \rrangle \geq \lmin(\oyyi)\, \tr(\dyy\, \oyyi\, \dyy) \geq \frac{\Vert \dyy \Vert_{F}^2}{4\, \lmax^2(\oyys)}
\end{equation*}
where the inequality in the denominator comes from $\lmax(\oyy) \leq 2\, \lmax(\oyys)$ already established in the proof of Lemma \ref{LemBornesVPL}. Similarly,
\begin{equation*}
\llangle \oyyi, \dyx\, L\, \dyxt \rrangle \geq \lmin(\oyyi)\, \tr(\dyx\, L\, \dyxt) \geq \frac{\lmin(L)\, \Vert \dyx \Vert_{F}^2}{2\, \lmax(\oyys)}.
\end{equation*}
Lemma \ref{LemBornesVPS} directly enables to bound the last term,
\begin{equation*}
\llangle \oyyi, \dyy\, \varx\, \dyy \rrangle \geq \lmin(\oyyi)\, \tr(\dyx\, \varx\, \dyxt) \geq \frac{\lmin(\sxxs)\, \Vert \dyx \Vert_{F}^2}{20\, \lmax(\oyys)}.
\end{equation*}
In conclusion, combining \eqref{ErrTaylor}, \eqref{InegDerSec} and the upper bounds above,
\begin{eqnarray*}
e_1(\theta, \ts) & \geq & \frac{a_1\, \Vert \dyy \Vert_{F}^2}{8\, \lmax^2(\oyys)} + \frac{a_2\, \lmin(L)\, \Vert \dyx \Vert_{F}^2}{4\, \lmax(\oyys)} + \frac{a_3\, \lmin(\sxxs)\, \Vert \dyx \Vert_{F}^2}{40\, \lmax(\oyys)} \\
 & \geq & \min\left\{ \frac{a_1}{8\, \lmax^2(\oyys)},\, \frac{a_2\, \lmin(L)}{4\, \lmax(\oyys)} + \frac{a_3\, \lmin(\sxxs)}{40\, \lmax(\oyys)} \right\} \Vert \dt \Vert_{F}^2
\end{eqnarray*}
and we clearly identify $\gamma_{r,\eta,\beta,p} > 0$.
\end{proof}

\begin{lem}
\label{LemBorneErr}
Assume that $\lambda$, $\mu$ and $\eta$ are chosen according to the configuration of the theorem. Suppose also that $h_{a}$ in \eqref{MatA} and $h_{b}$ in \eqref{MatB} satisfy
\begin{equation*}
\max\{h_{a}, h_{b}\} < \frac{r^{*}\, \gamma_{r,\eta,\beta,p}}{c_{\lambda, \mu} \sqrt{\vert S \vert}}
\end{equation*}
where $r^{*}$ is given in \eqref{CstR}, $\gamma_{r,\eta,\beta,p}$ in \eqref{CstGam} and $c_{\lambda, \mu}$ in \eqref{CstCb}. Then, under \eqref{H1} and \eqref{H2}, the estimation error satisfies $\Vert \dvt \Vert_{F} \leq r^{*}$.
\end{lem}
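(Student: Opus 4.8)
The plan is a peeling (rescaling) argument exploiting the joint convexity of Proposition~\ref{PropConv} together with the local strong convexity of Lemma~\ref{LemMajErr}. I would argue by contradiction: suppose $\Vert \dvt \Vert_{F} > r^{*}$ and introduce the shrunk point $\tilde{\theta} = \ts + t\,(\wht - \ts)$ with $t = r^{*}/\Vert \dvt \Vert_{F} \in\, ]0,1[$, so that $\Vert \tilde{\theta} - \ts \Vert_{F} = r^{*}$. Since $\Theta$ is convex, $\tilde{\theta} \in \Theta$, and since $[\tilde{\theta} - \ts]_{C} = t\,[\dvt]_{C}$ for every set of coordinates $C$, the cone inequality $\vert [\dvt]_{\bar S}\vert_1 \leq \alpha\,\vert [\dvt]_S\vert_1$ of Lemma~\ref{LemInegSmSp} is preserved under this scaling; combined with $\Vert \tilde{\theta}-\ts\Vert_F = r^{*}$, this gives $\tilde{\theta} \in N_{r,\alpha}(\ts)$, which is precisely what is needed to invoke Lemma~\ref{LemMajErr} at $\tilde{\theta}$. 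Then, since $\ell_{\lambda,\mu,\eta}$ is convex and $\wht$ is its global minimizer, $\ell_{\lambda,\mu,\eta}(\wht) \leq \ell_{\lambda,\mu,\eta}(\ts)$, and convexity along the segment joining $\ts$ to $\wht$ yields $\ell_{\lambda,\mu,\eta}(\tilde{\theta}) \leq (1-t)\,\ell_{\lambda,\mu,\eta}(\ts) + t\,\ell_{\lambda,\mu,\eta}(\wht) \leq \ell_{\lambda,\mu,\eta}(\ts)$, that is $\Delta_n(\tilde{\theta},\ts) \leq 0$ with the notation of the proof of Lemma~\ref{LemInegSmSp}.

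Next I would redo the computation of Lemma~\ref{LemInegSmSp} along the direction $\tilde{\theta}-\ts = t\,\dvt$, this time retaining the second-order term. By the exact Taylor identity \eqref{TaylorRn} at $\ts$, $\Delta_n(\tilde{\theta},\ts)$ splits into the gradient term $\llangle \nabla R_n(\ts), \tilde{\theta}-\ts\rrangle$, the remainder $e_1(\tilde{\theta},\ts)$, and the two $\ell_1$-penalty differences. Bounding the gradient term from below exactly as in Lemma~\ref{LemInegSmSp} (through $A_n$, $B_n$, $C_A$, $C_B$ and $\lambda \geq c_\lambda h_a$, $\mu \geq c_\mu h_b$), using the triangle-inequality lower bounds on the penalty differences, and — crucially — invoking Lemma~\ref{LemMajErr} to get $e_1(\tilde{\theta},\ts) > \gamma_{r,\eta,\beta,p}\,\Vert \tilde{\theta}-\ts\Vert_F^2 = \gamma_{r,\eta,\beta,p}\,r^{*\,2}$, the $[\cdot]_{\bar S}$ and $[\cdot]_S$ contributions regroup with the very same constants $\underline{c}$ and $\overline{c}$ as in that lemma, giving
\begin{equation*}
0 \geq \Delta_n(\tilde{\theta},\ts) > \gamma_{r,\eta,\beta,p}\,r^{*\,2} + \underline{c}\,\vert [\tilde{\theta}-\ts]_{\bar S}\vert_1 - \overline{c}\,\vert [\tilde{\theta}-\ts]_S\vert_1 \geq \gamma_{r,\eta,\beta,p}\,r^{*\,2} - \overline{c}\,\vert [\tilde{\theta}-\ts]_S\vert_1,
\end{equation*}
where the last step uses $\underline{c} > 0$, which holds under the configuration of the theorem.

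To conclude, I would apply Cauchy--Schwarz on the $\vert S\vert$ active coordinates, $\vert [\tilde{\theta}-\ts]_S\vert_1 \leq \sqrt{\vert S\vert}\,\Vert [\tilde{\theta}-\ts]_S\Vert_F \leq \sqrt{\vert S\vert}\,r^{*}$, which turns the previous display into $\gamma_{r,\eta,\beta,p}\,r^{*} < \overline{c}\,\sqrt{\vert S\vert}$. It then remains to translate $\overline{c}$ into $c_{\lambda,\mu}$: on $\Lambda$ one has $\lambda \leq d_\lambda\, h_a$ and $\mu \leq d_\mu\, h_b$, while the choice of $\overline{\eta}$ forces $\eta\,\beta\,s_L^{\,\beta-1}\ell_a \leq e_\lambda\, h_a$ and $\eta\,\beta\,s_L^{\,\beta-1}\ell_b \leq e_\mu\, h_b$, so $\overline{c} \leq c_{\lambda,\mu}\,\max\{h_a,h_b\}$ with $c_{\lambda,\mu}$ as in \eqref{CstCb}. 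Plugging this in yields $\max\{h_a,h_b\} > r^{*}\gamma_{r,\eta,\beta,p}/(c_{\lambda,\mu}\sqrt{\vert S\vert})$, contradicting the hypothesis, so the assumption $\Vert \dvt\Vert_F > r^{*}$ is untenable and $\Vert \dvt\Vert_F \leq r^{*}$.

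The individual estimates are exactly those already carried out in Lemma~\ref{LemInegSmSp} and Lemma~\ref{LemMajErr}, so the only genuinely delicate points are: (i) checking that the rescaled point $\tilde{\theta}$ still lies in $N_{r,\alpha}(\ts)$, which is what makes the local curvature bound of Lemma~\ref{LemMajErr} legitimately available at $\tilde{\theta}$; and (ii) the bookkeeping that identifies $\overline{c}$ with a multiple of $\max\{h_a,h_b\}$ through the definition of $\overline{\eta}$, so that the hypothesis on $\max\{h_a,h_b\}$ produces the contradiction in the correct form.
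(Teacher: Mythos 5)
Your proposal is correct and follows essentially the same route as the paper's proof: rescale $\dvt$ to a point at distance $r^{*}$, use the preserved cone condition from Lemma \ref{LemInegSmSp} to place it in $N_{r,\alpha}(\ts)$, combine the gradient and penalty bounds with the strong-convexity term from Lemma \ref{LemMajErr}, and conclude \emph{via} Cauchy--Schwarz and the identification $\overline{c} \leq c_{\lambda,\mu}\max\{h_a,h_b\}$. The only cosmetic difference is that you phrase the conclusion as an explicit contradiction, whereas the paper states the logically equivalent contrapositive implication.
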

\begin{proof}
By convexity of the objective and optimality of $\wht$, each move from $\ts$ in the direction $t\, \dvt$ for $t \in [0, 1]$ must lead to a decrease of the objective, \textit{i.e.}
\begin{equation*}
R_{n}(\ts + t\, \dvt) + \lambda\, \vert \oyys + t\, \dvyy \vert_1^{-} + \mu\, \vert \oyxs + t\, \dvyx \vert_1 - R_{n}(\ts) - \lambda\, \vert \oyys \vert_1^{-} - \mu\, \vert \oyxs \vert_1 \leq 0.
\end{equation*}
Taking the notation of \eqref{Delta}, this can be rewritten as $\Delta_{n}(\ts + t\, \dvt, \ts) \leq 0$. If $\Vert \dvt \Vert_{F} \leq r^{*}$ then choose $t=1$, otherwise calibrate $0 < t < 1$ such that $\Vert t\, \dvt \Vert_{F} = r^{*}$. Then, from Lemma \ref{LemInegSmSp}, it clearly follows that $\ts + t\, \dvt \in N_{r, \alpha}(\ts)$. Hence, the reasoning preceding \eqref{Delta} still holds and, together with Lemma \ref{LemMajErr}, we obtain
\begin{eqnarray*}
0 & \geq & \underline{c}\, \big( \vert [t\, \dvyy]_{\bar{S}} \vert_1 + \vert [t\, \dvyx]_{\bar{S}} \vert_1 \big) - \overline{c}\, \big( \vert [t\, \dvyy]_{S} \vert_1 + \vert [t\, \dvyx]_{S} \vert_1 \big) + \gamma_{r,\eta,\beta,p}\, \Vert t\, \dvt \Vert_{F}^2 \\
 & \geq & -\overline{c}\,\vert [t\, \dvt]_{S} \vert_1 + \gamma_{r,\eta,\beta,p}\, \Vert t\, \dvt \Vert_{F}^2 \\
 & \geq & -c_{\lambda, \mu} \max\{h_{a}, h_{b}\}\, \sqrt{\vert S \vert}\, \Vert t\, \dvt \Vert_{F} + \gamma_{r,\eta,\beta,p}\, \Vert t\, \dvt \Vert_{F}^2 
\end{eqnarray*}
where we used $\underline{c} > 0$ and Cauchy-Schwarz inequality to get $\vert [ \cdot ]_{S} \vert_1^2 \leq \vert S \vert\, \Vert [ \cdot ]_{S} \Vert_{F}^2$. The constant $c_{\lambda, \mu}$ may be explicitely computed from the configuration of $(\lambda, \mu, \eta)$ and is given in \eqref{CstCb}. Note that in the proof of Lemma \ref{LemInegSmSp}, it was sufficient to see that $R_{n}(\theta) - R_{n}(\ts) \geq \phi^{\prime}(0)$ whereas here, we must consider $R_{n}(\theta) - R_{n}(\ts) = \phi^{\prime}(0) + e_1(\theta, \ts)$ to meet our purposes. That explains the presence of $\gamma_{r,\eta,\beta,p}\, \Vert t\, \dvt \Vert_{F}^2$ in the inequality. We deduce that the error must satisfy
\begin{equation*}
\Vert t\, \dvt \Vert_{F} \leq \frac{c_{\lambda, \mu} \sqrt{\vert S \vert} \max\{h_{a}, h_{b}\}}{\gamma_{r,\eta,\beta,p}}.
\end{equation*}
As a corollary, it holds that $\Vert \dvt \Vert_{F} > r^{*}\, \Rightarrow\, c_{\lambda, \mu} \sqrt{\vert S \vert} \max\{h_{a}, h_{b} \} \geq r^{*}\, \gamma_{r,\eta,\beta,p}$ or, conversely written, $c_{\lambda, \mu} \sqrt{\vert S \vert} \max\{h_{a}, h_{b} \} < r^{*}\, \gamma_{r,\eta,\beta,p}\, \Rightarrow\, \Vert \dvt \Vert_{F} \leq r^{*}$.
\end{proof}

\begin{lem}
\label{LemBorneFinale}
Assume that $\lambda$, $\mu$ and $\eta$ are chosen according to the configuration of the theorem. Then, under \eqref{H1}, there exists absolute constants $b_1 > 0$ and $b_2 > 0$ such that, for any $b_3 \in\, ]0,1[$ and as soon as
\begin{equation*}
n \geq \max\big\{ b_1\, (q + \lceil s_{\alpha} \rceil \ln(p+q)),\, \ln(10 (p+q)^2) - \ln(b_3) \big\},
\end{equation*}
with probability no less that $1 - \de^{-b_2 n}-b_3$ both the random hypothesis \eqref{H2} is satisfied and the upper bound
\begin{equation*}
\max\{h_{a}, h_{b} \} \leq 16\, m^{*} \sqrt{\frac{\ln(10 (p+q)^2) - \ln(b_3)}{n}}
\end{equation*}
holds, where $h_{a}$ and $h_{b}$ are given in \eqref{MatA} and \eqref{MatB}, $s_{\alpha}$ is defined in \eqref{CstSa} and $m^{*}$ in \eqref{CstMs}. Hence, one can find a minimal number of observations $n_0$ such that the theorem holds with high probability as soon as $n > n_0$.
\end{lem}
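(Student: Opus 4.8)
The plan is to reduce the statement to two independent probabilistic facts --- the entrywise concentration of $A_n$ and $B_n$ around zero, and the validity of the restricted isometry hypothesis \eqref{H2} --- establish each by a standard Gaussian concentration argument, intersect the good events, and then bolt the deterministic Lemmas \ref{LemInegSmSp}--\ref{LemBorneErr} on top to recover the theorem.

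The first step is to exploit joint Gaussianity to linearize $A_n$ and $B_n$. Writing $M = \oyyis\, \oyxs \in \dR^{q\times p}$, the conditional law recalled in Section \ref{SecMod} reads $Y_i = -M X_i + E_i$ with $E_i \sim \cN(0, \oyyis)$ independent of $X_i$. Substituting this into \eqref{MatA} and \eqref{MatB}, the cross terms $M X_i E_i^t$ have zero mean, and a short computation collapses the expressions to
\begin{equation*}
A_n = \frac1n \sum_{i=1}^n \big( E_i E_i^t - M X_i E_i^t - E_i X_i^t M^t \big) - \oyyis, \hsp B_n = \frac2n \sum_{i=1}^n E_i X_i^t.
\end{equation*}
Every entry of $A_n$ and of $B_n$ is therefore a centered empirical average of products of at most two coordinates of the jointly Gaussian vector $((E_i)^t,(X_i)^t)^t$, hence a sum of independent sub-exponential variables whose Orlicz norms are controlled by $\lmax(\oyyis)$, $\lmax(\sxxs)$ and $\Vert M \Vert_2$ --- precisely the quantities aggregated into $m^{*}$ in \eqref{CstMs}. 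A Bernstein inequality for such sums then bounds the probability that a given entry exceeds $t$ (after normalization) by $2\,\de^{-c\, n\min(t^2,t)}$; the assumption $n \geq \ln(10(p+q)^2)-\ln(b_3)$ keeps the relevant $t$ in the Gaussian regime $\min(t^2,t)=t^2$. Choosing $t \asymp 16\, m^{*}\sqrt{(\ln(10(p+q)^2)-\ln(b_3))/n}$ and a union bound over the $\lesssim q^2$ entries of $A_n$ and $qp$ entries of $B_n$ (whence the generous count $10(p+q)^2$) yields the claimed upper bound on $\max\{h_a,h_b\}$ with probability at least $1-b_3$.

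The second step treats \eqref{H2}. Its first part is the usual sparse restricted eigenvalue property of the Gaussian sample covariance $\varx$: a covering argument over the union of all $\lceil s_{\alpha}\rceil$-dimensional coordinate subspaces (as in \cite{YuanZhang14} and the references therein) shows that $\tfrac12\, u^t\sxxs u \leq u^t\varx u \leq \tfrac32\, u^t\sxxs u$ holds simultaneously for all $\lceil s_{\alpha}\rceil$-sparse $u$ with probability at least $1-2\,\de^{-c_1 n}$ as soon as $n \gtrsim \lceil s_{\alpha}\rceil\ln(p+q)$; this is the origin of the term $b_1\lceil s_{\alpha}\rceil\ln(p+q)$. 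Its second part concerns the $q\times q$ matrix $\oyxs\,\varx\,\oyxts = \tfrac1n\sum_i (\oyxs X_i)(\oyxs X_i)^t$, which concentrates in spectral norm around $\oyxs\,\sxxs\,\oyxts$ within a factor $7/5$ with probability at least $1-2\,\de^{-c_2 n}$ once $n \gtrsim q$ --- the origin of the term $b_1 q$. Intersecting these two events and taking $b_2 < \min\{c_1,c_2\}$ gives \eqref{H2} with probability at least $1-\de^{-b_2 n}$ whenever $n \geq b_1(q+\lceil s_{\alpha}\rceil\ln(p+q))$.

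Finally I would assemble. On the intersection of the two good events, of probability at least $1-\de^{-b_2 n}-b_3$, Lemma \ref{LemInegSmSp} supplies the cone inequality $\vert[\dvt]_{\bar S}\vert_1 \leq \alpha\,\vert[\dvt]_S\vert_1$, and provided moreover $16\, m^{*}\sqrt{(\ln(10(p+q)^2)-\ln(b_3))/n} < r^{*}\gamma_{r,\eta,\beta,p}/(c_{\lambda,\mu}\sqrt{\vert S\vert})$ --- i.e.\ $n$ exceeding the first term of \eqref{NbMinObs} --- the hypothesis of Lemma \ref{LemBorneErr} is met, so $\Vert\dvt\Vert_F \leq r^{*}$ and $\wht \in N_{r,\alpha}(\ts)$. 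Re-running the basic inequality together with the quadratic lower bound of Lemma \ref{LemMajErr} (exactly as in the proof of Lemma \ref{LemBorneErr}) gives $\Vert\wht-\ts\Vert_F \leq c_{\lambda,\mu}\sqrt{\vert S\vert}\,\max\{h_a,h_b\}/\gamma_{r,\eta,\beta,p}$, into which the concentration bound for $\max\{h_a,h_b\}$ is substituted to produce the error bound of Theorem \ref{ThmUppBound}; taking $n_0$ to be the maximum of the three lower bounds collected along the way reproduces \eqref{NbMinObs}. The main obstacle here is bookkeeping rather than conceptual: one must chase the sub-exponential Bernstein constants cleanly through the linear combinations defining $A_n$ and $B_n$ so that they land inside $m^{*}$, and pin down the absolute constant $b_1$ in the restricted-isometry step with the correct $q + \lceil s_{\alpha}\rceil\ln(p+q)$ scaling --- the latter being the genuinely delicate point, which is why it is imported wholesale from the high-dimensional literature.
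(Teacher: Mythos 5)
Your proposal is correct in substance, but it is worth noting that it does genuinely more work than the paper does at this point: the paper's own proof of this lemma is essentially a pair of citations to \cite{YuanZhang14} (their restricted-isometry statement for \eqref{H2}, with the references \cite{BaraniukEtAl08} and \cite{Giraud14}, and their Prop.~4 for the deviation bound on $\max\{h_a,h_b\}$), followed by the observation that $n_0$ in \eqref{NbMinObs} makes the deviation bound compatible with Lemma \ref{LemBorneErr}. You instead reconstruct the two probabilistic ingredients from scratch, and your reconstruction is the right one: the linearization $A_n=\tfrac1n\sum_i(E_iE_i^t-MX_iE_i^t-E_iX_i^tM^t)-\oyyis$ and $B_n=\tfrac2n\sum_iE_iX_i^t$ with $M=\oyyis\oyxs$ checks out against \eqref{MatA}--\eqref{MatB} using $\syys=M\sxxs M^t+\oyyis$ and $\syxs=-M\sxxs$, and the sub-exponential Bernstein bound plus union bound over the $\leq(p+q)^2$ entries, together with the sparse restricted-eigenvalue and spectral-concentration events for \eqref{H2}, is exactly the machinery behind the cited results. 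What the paper's route buys is brevity and exact constants inherited from \cite{YuanZhang14}; what yours buys is a self-contained argument. One detail you should verify before claiming the Orlicz norms are ``precisely the quantities aggregated into $m^*$'': the constant $m^{*}$ in \eqref{CstMs} is $\vert\diag(\sxxs)\vert_\infty+\vert\diag(\oyyis\oyxs\sxxs\oyxts\oyyis)\vert_\infty$ and contains no visible $\vert\diag(\oyyis)\vert_\infty$ term, whereas the $E_iE_i^t-\oyyis$ block of your decomposition of $A_n$ has sub-exponential norm governed by the diagonal of $\oyyis$; reconciling this (e.g.\ via $\diag(\syys)$ or via the exact normalization used in Prop.~4 of \cite{YuanZhang14}) is the one piece of bookkeeping your sketch glosses over. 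Your closing assembly paragraph correctly mirrors how the lemma feeds into Theorem \ref{ThmUppBound}.
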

\begin{proof}
All the ingredients of the proof are established in \cite{YuanZhang14}. The authors start by recalling that there exists absolute constants $b_1 > 0$ and $b_2 > 0$ such that hypothesis \eqref{H2} is satisfied with probability no less than $1 - \de^{-b_2 n}$ as soon as $n \geq b_1\, (q + \lceil s_{\alpha} \rceil \ln(p+q))$. We also refer the reader to Lem. 5.1 and Thm. 5.2 of \cite{BaraniukEtAl08}, or to Lem. 7.4 of \cite{Giraud14} for the random bounds of the restricted isometry constants. Afterwards, they prove (see Prop. 4) that, as soon as $n \geq \ln(10 (p+q)^2) - \ln(b_3)$ for some $b_3 > 0$, with probability $1-b_3$,
\begin{equation*}
\max\{h_{a}, h_{b}\} \leq 16\, m^{*} \sqrt{\frac{\ln(10 (p+q)^2) - \ln(b_3)}{n}}.
\end{equation*}
To find the minimal number of observations, we just need to make sure that the above bound is itself smaller than the one of Lemma \ref{LemBorneErr}. It is then not hard to see that we may retain the minimal size $n_0$ given in \eqref{NbMinObs}.
\end{proof}

\appendix
\section{Some constants}
\label{SecApp}
This appendix is entirely dedicated to the constants appearing in the theoretical guarantees. Indeed, a centralization seemed necessary to clarify the rest of the paper, especially the understanding of the main theorem. First, we need to define some constants related to $L$ and to the true values of the model. The bounds
\begin{equation}
\label{BornesVP}
\omli = \frac{\lmin(\oyxs\, L\, \oyxts)}{4\, \lmax(\oyys)}, \hsp \omls = \frac{4\, \lmax(\oyxs\, L\, \oyxts)}{\lmin(\oyys)}, \hsp \omss = \frac{4\, \lmax(\oyxs\, \sxxs\, \oyxts)}{\lmin(\oyys)}.
\end{equation}
are useful to control the eigenvalues of some recurrent expressions (Lemmas \ref{LemBornesVPS} and \ref{LemBornesVPL}), uniformly in a neighborhood of $\ts = (\oyys, \oyxs)$. The true value of the term at the heart of the structural regularization is
\begin{equation}
\label{CstS}
s_{L} = \llangle L, \oyxts\, \oyyis\, \oyxs \rrangle.
\end{equation}
It plays a role in the proof of Lemma \ref{LemInegSmSp} and, as a consequence, in the definition of the area of validity $\Lambda$. This important lemma also  requires to define
\begin{equation}
\label{CstL}
 \hsp \ell_{a} = \vert \oyyis\, \oyxs\, L\, \oyxts\, \oyyis \vert_{\infty} \hsp \text{and} \hsp \ell_{b} = 2\, \vert \oyyis\, \oyxs\, L \vert_{\infty}
\end{equation}
and, in the context of the theorem,
\begin{equation}
\label{CstAlpha}
\alpha = \frac{\max\left\{ \frac{(c_{\lambda}+1) \lambda}{c_{\lambda}} + \eta \beta\, s_{L}^{\, \beta-1} \ell_{a}, \frac{(c_{\mu}+1) \mu}{c_{\mu}} + \eta \beta\, s_{L}^{\, \beta-1} \ell_{b} \right\}}{\min\left\{ \frac{(c_{\lambda}-1) \lambda}{c_{\lambda}} - \eta \beta\, s_{L}^{\, \beta-1} \ell_{a}, \frac{(c_{\mu}-1) \mu}{c_{\mu}} - \eta \beta\, s_{L}^{\, \beta-1} \ell_{b} \right\}}.
\end{equation}
From $\alpha$ and the cardinality of the true active set $\vert S \vert$, let
\begin{equation}
\label{CstSa}
s_{\alpha} = \vert S \vert\, \left[ 1 + \frac{12\, \alpha^2\, \lmax(\sxxs)}{\lmin(\sxxs)} \right]
\end{equation}
which serves as an upper bound in the random hypothesis \eqref{H2}. Similarly, let
\begin{equation}
\label{CstR}
r^{*} = \min\{ r_1^{*}, r_2^{*}, r_3^{*}, r_4^{*} \} 
\end{equation}
where
\begin{equation*}
r_1^{*} = \frac{\lmin(\oyys)}{2}, \hsp r_2^{*} = \frac{\frac{\sqrt{10} - \sqrt{7}}{\sqrt{5}} \sqrt{\lmax(\oyxs\, \sxxs\, \oyxts)}}{\frac{3 \sqrt{3}}{2 \sqrt{2}} \sqrt{\lmax(\sxxs)}}, \hsp r_3^{*} = \frac{\lmin(\oyxs\, L\, \oyxts)}{4\, \Vert L\, \oyxts \Vert_{2}}
\end{equation*}
and
\begin{equation*}
r_4^{*} =  \frac{(\sqrt{2}-1) \sqrt{\lmax(\oyxs\, L\, \oyxts)}}{\sqrt{\lmax(L)}}.
\end{equation*}
Together with $\alpha$ given above, $r^{*}$ is necessary to build the so-called neighborhood $N_{r, \alpha}(\ts)$ defined in \eqref{DefNR}, which plays a fundamental role in all our reasonings. It is important to note that, under the configuration of the theorem and hypothesis \eqref{H1}, $\alpha > 0$ and $r^{*} > 0$. Then, Lemma \ref{LemMajErr} highlights a new constant, characterizing a strong local convexity of the smooth part of the objective in the neighborhood $N_{r, \alpha}(\ts)$,
\begin{equation}
\label{CstGam}
\gamma_{r,\eta,\beta,p} = \min\left\{ \frac{a_1}{8\, \lmax^2(\oyys)},\, \frac{a_2\, \lmin(L)}{4\, \lmax(\oyys)} + \frac{a_3\, \lmin(\sxxs)}{40\, \lmax(\oyys)} \right\}
\end{equation}
where, as it is detailed in the proof of the lemma in question,
\begin{equation*}
a_1 = 1 - \epsilon_{S}\, \omss - \eta \beta\, p^{\, \beta-1}\, \omls^{\, \beta}\, \epsilon_{L}, \hsp a_2 = \frac{2\, \epsilon_{S}}{2+\epsilon_{S}} \hsp \text{and} \hsp a_3 = \eta \beta\, (p\, \omli)^{\beta-1} \frac{2\, \epsilon_{L}}{2+\epsilon_{L}}
\end{equation*}
for some well-chosen $\epsilon_{S} > 0$ and $\epsilon_{L} > 0$. Here again, we make sure that $\gamma_{r,\eta,\beta,p} > 0$. In the same way, in the context of the theorem,
\begin{equation}
\label{CstCb}
c_{\lambda, \mu} = \max\left\{ \frac{(c_{\lambda}+1)\, d_{\lambda}}{c_{\lambda}} + e_{\lambda},\, \frac{(c_{\mu}+1)\, d_{\mu}}{c_{\mu}} + e_{\mu} \right\}
\end{equation}
is needed through Lemma \ref{LemBorneErr}. Finally, independently of the structure matrix $L$,
\begin{equation}
\label{CstMs}
m^{*} = \vert \diag(\sxxs) \vert_{\infty} + \vert \diag(\oyyis\, \oyxs\, \sxxs\, \oyxts\, \oyyis) \vert_{\infty}
\end{equation}
is going to play a significative role in the upper bound of the theorem.

\nocite{*}

\bibliographystyle{acm}
\bibliography{HD_CovEst}

\begin{thebibliography}{10}

\bibitem{AndrewGao07}
{\sc Andrew, G., and Gao, J.}
\newblock Scalable training of ${L}_1$-regularized log-linear models.
\newblock {\em Proc. 24th Inte. Conf. Mach. Learning.\/} (2007), 33--40.

\bibitem{BanerjeeEtAl08}
{\sc Banerjee, O., El~Ghaoui, L., and D'Aspremont, A.}
\newblock Model selection through sparse maximum likelihood estimation for
  multivariate {G}aussian or binary data.
\newblock {\em J. Mach. Learn. Res. 9\/} (2008), 485--516.

\bibitem{BaraniukEtAl08}
{\sc Baraniuk, R., Davenport, M., DeVore, R., and Wakin, M.}
\newblock A simple proof of the restricted isometry property for random
  matrices.
\newblock {\em Constr. Approx. 28}, 3 (2008), 253--263.

\bibitem{BoydVandenberghe04}
{\sc Boyd, S., and Vandenberghe, L.}
\newblock {\em Convex Optimization}.
\newblock Cambridge University Press. 2004.

\bibitem{CaiEtAl13}
{\sc Cai, T., Li, H., Liu, W., and Xie, J.}
\newblock Covariate-adjusted precision matrix estimation with an application in
  genetical genomics.
\newblock {\em Biometrika. 100}, 1 (2013), 139--156.

\bibitem{CaiEtAl11}
{\sc Cai, T., Liu, W., and Luo, X.}
\newblock A constrained $\ell_1$ minimization approach to sparse precision
  matrix estimation.
\newblock {\em J. Am. Stat. Assoc. 106}, 494 (2011), 594--607.

\bibitem{ChiquetEtAl17}
{\sc Chiquet, J., Mary-Huard, T., and Robin, S.}
\newblock Structured regularization for conditional {G}aussian graphical
  models.
\newblock {\em Stat. Comput. 27}, 3 (2017), 789--804.

\bibitem{FanEtAl09}
{\sc Fan, J., Feng, Y., and Wu, Y.}
\newblock Network exploration via the adaptive {L}asso and {SCAD} penalties.
\newblock {\em Ann. Appl. Stat. 3}, 2 (2009), 521--541.

\bibitem{FriedmanEtAl08}
{\sc Friedman, J., Hastie, T., and Tibshirani, R.}
\newblock Sparse inverse covariance estimation with the graphical {L}asso.
\newblock {\em Biostatistics. 9}, 3 (2008), 432--441.

\bibitem{Giraud14}
{\sc Giraud, C.}
\newblock {\em Introduction to High-Dimensional Statistics}.
\newblock Chapman \& Hall/CRC Monographs on Statistics \& Applied Probability.
  Taylor \& Francis, 2014.

\bibitem{HastieEtAl15}
{\sc Hastie, T., Tibshirani, R., and Wainwright, M.}
\newblock {\em Statistical Learning with Sparsity: The Lasso and
  Generalizations}.
\newblock Chapman \& Hall/CRC Monographs on Statistics and Applied Probability.
  CRC Press, 2015.

\bibitem{HornJohnson92}
{\sc Horn, R.~A., and Johnson, C.~R.}
\newblock {\em Matrix Analysis (Second Edition)}.
\newblock Cambridge University Press, Cambridge, New-York, 2012.

\bibitem{JohnsonEtAl12}
{\sc Johnson, C., Jalali, A., and Ravikumar, P.}
\newblock High-dimensional sparse inverse covariance estimation using greedy
  methods.
\newblock In {\em Proceedings of the Fifteenth International Conference on
  Artificial Intelligence and Statistics.\/} (2012), vol.~22 of {\em
  Proceedings of Machine Learning Research}, PMLR, pp.~574--582.

\bibitem{LeeLiu12}
{\sc Lee, W., and Liu, Y.}
\newblock Simultaneous multiple response regression and inverse covariance
  matrix estimation via penalized {G}aussian maximum likelihood.
\newblock {\em J. Multivariate. Anal. 111\/} (2012), 241--255.

\bibitem{Lu09}
{\sc Lu, Z.}
\newblock Smooth optimization approach for sparse covariance selection.
\newblock {\em Siam. J. Optimiz. 19}, 4 (2009), 1807--1827.

\bibitem{MaathuisEtAl18}
{\sc Maathuis, M., Drton, M., Lauritzen, S.~L., and Wainwright, M.}
\newblock {\em Handbook of Graphical Models}.
\newblock Chapman \& Hall/CRC Handbooks of Modern Statistical Methods. CRC
  Press, 2018.

\bibitem{MeinshausenBuhlmann06}
{\sc Meinshausen, N., and B\"uhlmann, P.}
\newblock High-dimensional graphs and variable selection with the {L}asso.
\newblock {\em Ann. Stat. 34}, 3 (2006), 1436--1462.

\bibitem{PascalEtAl13}
{\sc Pascal, F., Bombrun, L., Tourneret, J.~Y., and Berthoumieu, Y.}
\newblock Parameter estimation for multivariate generalized {G}aussian
  distributions.
\newblock {\em IEEE. T. Signal. Process. 61}, 23 (2013), 5960--5971.

\bibitem{PengEtAl09}
{\sc Peng, J., Wang, P., Zhou, N., and Zhu, J.}
\newblock Partial correlation estimation by joint sparse regression models.
\newblock {\em J. Am. Stat. Assoc. 104}, 486 (2009), 735--746.

\bibitem{RamsaySilverman06}
{\sc Ramsay, J., and Silverman, B.}
\newblock {\em Functional Data Analysis, 2nd ed.}
\newblock Springer, New-York, 2006.

\bibitem{RavikumarEtAl11}
{\sc Ravikumar, P., Wainwright, M., Raskutti, G., and Yu, B.}
\newblock High-dimensional covariance estimation by minimizing
  $\ell_1$-penalized log-determinant divergence.
\newblock {\em Electron. J. Stat. 5\/} (2011), 935--980.

\bibitem{RossiEtAl12}
{\sc Rossi, P., Allenby, G., and McCulloch, R.}
\newblock {\em Bayesian Statistics and Marketing}.
\newblock Wiley Series in Probability and Statistics. Wiley, 2012.

\bibitem{RothmanEtAl10}
{\sc Rothman, A., Levina, E., and Zhu, J.}
\newblock Sparse multivariate regression with covariance estimation.
\newblock {\em J. Comp. Graph. Stat. 19}, 4 (2010), 947--962.

\bibitem{Slawski12}
{\sc Slawski, M.}
\newblock The structured elastic net for quantile regression and support vector
  classification.
\newblock {\em Stat. Comput. 22\/} (2012), 153--168.

\bibitem{SlawskiEtAl10}
{\sc Slawski, M., Zu~Castell, W., and Tutz, G.}
\newblock Feature selection guided by structural information.
\newblock {\em Ann. Appl. Stat. 4}, 2 (2010), 1056--1080.

\bibitem{SohnKim12}
{\sc Sohn, K.~A., and Kim, S.}
\newblock Joint estimation of structured sparsity and output structure in
  multiple-output regression via inverse-covariance regularization.
\newblock In {\em Proceedings of the Fifteenth International Conference on
  Artificial Intelligence and Statistics.\/} (2012), vol.~22 of {\em
  Proceedings of Machine Learning Research}, PMLR, pp.~1081--1089.

\bibitem{YinLi11}
{\sc Yin, J., and Li, H.}
\newblock A sparse conditional {G}aussian graphical model for analysis of
  genetical genomics data.
\newblock {\em Ann. Appl. Stat. 5}, 4 (2011), 2630--2650.

\bibitem{YuanLin07}
{\sc Yuan, M., and Lin, Y.}
\newblock Model selection and estimation in the {G}aussian graphical model.
\newblock {\em Biometrika. 94}, 1 (2007), 19--35.

\bibitem{YuanZhang14}
{\sc Yuan, X.~T., and Zhang, T.}
\newblock Partial {G}aussian graphical model estimation.
\newblock {\em IEEE. T. Inform. Theory. 60}, 3 (2014), 1673--1687.

\end{thebibliography}

\end{document}